\newtheorem{theorem}{Theorem}[section]
\newtheorem{lemma}[theorem]{Lemma}
\newcommand{\interior}[1]{%
  {\kern0pt#1}^{\mathrm{o}}%
}
\newcommand{\norm}[1]{\left\| #1 \right\|}
 \newcommand{\lsup}[1]{\underset{#1\to\infty}{\overline{\lim}}}
\newcommand{\linf}[1]{\underset{#1\to\infty}{\underline{\lim}}}
\begin{document}

\title{Large Deviations of Non-Stochastic Interacting Particles on Random Graphs}

\author{James MacLaurin\footnote{New Jersey Institute of Technology. james.n.maclaurin@njit.edu}}
\maketitle
\abstract{
This paper concerns the large deviations of a system of interacting particles on a random graph. There is no time-varying stochasticity, and the only sources of disorder are the random graph connections, and the initial condition. The average number of afferent edges on any particular vertex must diverge to infinity as $N\to \infty$, but can do so at an arbitrarily slow rate. These results are thus accurate for both sparse and dense random graphs. A particular application to sparse Erdos-Renyi graphs is provided. The theorem is proved by pushing forward a Large Deviation Principle for a `nested empirical measure' generated by the initial conditions to the dynamics. The nested empirical measure can be thought of as the density of the density of edge connections: the associated weak topology is more coarse than the topology generated by the graph cut norm, and thus there is a broader range of application.
}
\section{Interacting Particle Systems on Heterogeneous Graphs}

Understanding the behavior of interacting particle systems on heterogeneous graphs has been a topic of increasing interest over the last five years. This work has a diverse range of applications, including neuroscience \cite{Bressloff2012,Huang2017}, mean-field games \cite{Carmona2018,Bayraktar2020}, Josephson arrays \cite{Phillips1993} and machine learning \cite{Montanari2019}. Of the papers on the Large Deviations of interacting particle systems, the majority concern systems with stochasticity \cite{Tanaka1982,Dawson1987,DaiPra1996,Budhiraja2012,Delattre2016,Coppini2019,Oliveira2019}. Even if there is disorder in the graph connections (such as in \cite{Delattre2016,Coppini2019,Oliveira2019,Oliveira2020,Lucon2020}),  in the large size limit the noise from the stochasticity dominates any disorder resulting from the connection topology. By contrast, in this paper we prove a Large Deviations result for networks without any stochasticity, so that the only source of randomness is the random graph topology and possibly the initial condition.

The theory of Large Deviations concerns the probability of rare events \cite{Donsker1975a,Dupuis1997,Dembo1998,Freidlin2012}. In neuroscience, for instance, it is known that networks of neurons can support transient phenomena such as UP / DOWN transitions \cite{Pena2018}, neural avalanches and waves \cite{Roberts2019}. It is disputed the extent to which the seeming disorder and chaoticity of much neural activity is due to white noise fluctuations (arising from synaptic transmission failure, for instance), or the underlying disordered structure of networks of neurons \cite{Sompolinsky1988,Huang2017}. Thus there is a need for a theory that elucidates how the disordered structure of neural networks can support a diversity of different behaviors \cite{Roberts2019}. This paper provides a step in this direction.

There is a longstanding mathematical literature on the asymptotic behavior of networks of interacting particle systems, dating from the seminal work of Tanaka \cite{Tanaka1982} and Sznitman \cite{Sznitman1989}. In recent years, effort has been directed towards understanding the asymptotic behavior of networks of interacting particles on large random graphs subject to white noise \cite{Delattre2016,maclaurin2016large,Coppini2019,Oliveira2019,Lacker2019,Bhamidi2019,Kuehn2020,Lucon2020,Oliveira2020,Bayraktar2020,Barre2020}. In our earlier work in \cite{maclaurin2016large} we determined the asymptotic behavior (and Large Deviations) of noisy interacting particle systems on large, sparse and recurrent graphs where the total number of connections is of the same order as the system size. Subsequent work by \cite{Coppini2019,Oliveira2019} determined the limiting dynamics and large deviations of noisy interacting particle systems on sparse graphs where the total number of connections is asymptotically much larger than the total number of nodes in the network. Most recently, \cite{Oliveira2020} determine the large size limiting behavior of interacting particle systems on Galton-Watson trees (extremely sparse networks, where the expected total number of edges is a finite multiple of the number of vertices). In the Large Deviations result of \cite{Coppini2019,Oliveira2019} the white noise dominates the graph disorder in the large size limit, so that the Large Deviations asymptotic for the distribution of the empirical measure scales as $O(\exp(-NC))$ for some constant $C$ that is the infimum of the rate function over a set. Thus, to leading order, the system still behaves like an all-to-all homogeneous network in the large size limit.

The method of this paper is to demonstrate that the empirical measure containing the path-dynamics can be written as a function of the empirical measure of initial conditions. The Large Deviations rate function for the dynamical system is then the push-forward of the Large Deviations rate function of the initial condition. To the best of this author's knowledge, this method was first applied to interacting particle systems by Tanaka \cite{Tanaka1982}; see also the recent exposition in \cite{Coghi2018}. This method has also been used to determine the Large Deviations of extremely sparse interacting particle systems in \cite{maclaurin2016large}. For this method to work, the push-forward mapping has to be continuous (or approximately continuous). For this reason, we require a more refined empirical measure that contains the conditional empirical measure of afferent connections on one node. With this richer topology, we are able to adapt classical weak-convergence methods to our context. Another advantage of employing the weak convergence of empirical measures is that we do not require that the typical number of afferent connections to each particle to be of the same asymptotic order throughout the network: indeed the number of afferent connections to some of the particles can be $O(1)$, and the number of afferent connections to other particles can diverge with $n$. 

Many papers concerning the asymptotic convergence of large networks of random graphs \cite{Chatterjee2011,Bordenave2015,Bhattacharya2017,Bayraktar2020} utilize the graphon theory of Lovasz \cite{Lovasz2012}. The topology on the space of all graphs is defined using the cut distance metric first developed by Frieze and Kannan \cite{Frieze1999}. Lucon \cite{Lucon2020} and Bayraktar \textit{et al} \cite{Bayraktar2020}  employ graphon theory to determine the asymptotic behavior of disordered noisy interacting particle systems. Recently \cite{Dupuis2020} use the cut-distance to determine the large deviations of an interacting particle model in the dense regime (the total number of edges scales as $O(N^2)$). While graphon theory can be used to study the dynamics of dense graphs, its application to dynamical systems on sparse random graphs is complicated by the lack of a suitable regularity theory \cite{Chatterjee2016,Cook2020,Gkogkas2020}. It has recently been suggested that dynamical systems on sparse graphs can be studied using `graphops' \cite{Backhausz2018,Gkogkas2020}, that represent the edge structure of the graph as a special type of operator on the space of nodes. Our approach bears some similarities to the graphop method of \cite{Gkogkas2020}, however perhaps the most important difference is that we have an uncountable number of nodes, and we must therefore carefully construct a topology that ensures the compactness of the push-forward operator $\Psi$. More specifically, we represent the random structure of the graph in a non-standard `nested' empirical measure $\hat{\mu}^n_*$ (this contains the distribution of the connectivities at each node). We can then represent the empirical measure of path-solutions as a push-forward of the empirical measure of initial conditions arising from the graph. In this way we are the first to obtain a large deviations principle for non-stochastic interacting particle systems on sparse random graphs.

We scale the strength of interaction by dividing by the total number of afferent edges. If one were to be merely interested in the large size limit of the network (and not the Large Deviations), then one could equivalently divide by the average number of afferent edges (thanks to the Law of Large Numbers, this is equivalent in the large size limit). However once one is in the Large Deviations regime (which by definition concerns rare events), this equivalence is no longer the case. In particular, the probability of a significant number of vertices being either very sparsely connected (much sparser than the average), or relatively densely connected (much more than the average) is not negligible in the Large Deviations regime. For this reason, in this paper we always scale the net effect of one particle on another by the total number of afferent edges: thus the less edges that are afferent on any particular vertex, the greater effect each connection has (other papers such as \cite{Oliveira2019,Lucon2020} also use this scaling). The implication of this is that the large deviations rate function is uniformly upperbounded by a non-infinite constant. This may seem strange on first appearances, because in classical Large Deviations work on the empirical measure for stochastically-interacting particle systems, the rate function is the push-forward of the Relative Entropy, and therefore can be infinite \cite{Coghi2018}. The reason that the rate function of this paper is bounded is that asymptotically sparse networks can always yield any particular connectivity structure (thanks to the scaling of the effective connection strength by the total connectivity); and the probability of an asymptotically sparse connectivity scales according to equation \eqref{eq: epsilon to zero U}.
 
 The structure of this paper is as follows. In Section 2 we define the interacting particle system and outline our three main results: the first Theorem proves the existence and continuity (with respect to a topology $\tilde{\mathcal{T}}$ that is more refined than the weak topology) of the push-forward map $\Psi$, the second Theorem proves the Large Deviation Principle in the case that the initial empirical measure satisfies an LDP with respect to the weak topology, and the third Theorem applies these results to interacting particle systems on sparse Erdos-Renyi graphs. In Section 3 we prove Theorem \ref{Theorem Sparse LDP}, and in Section 4 we prove Theorems \ref{Theorem Psi} and \ref{Theorem 1}.

\subsection{Notation:}

We write $\mathcal{E} = \lbrace 0,1\rbrace$ (the state space for the connections). For any topological space $\mathcal{X}$, let $\mathcal{B}(\mathcal{X})$ denote the set of all Borelian subsets generated from open and closed subsets of $\mathcal{X}$. Let $\mathcal{C}_b(\mathcal{X})$ denote the set of bounded continuous functions $\mathcal{X} \to \mathbb{R}$ and let $\mathcal{P}(\mathcal{X})$ denote the set of all Borelian probability measures.  Unless otherwise indicated, $\mathcal{P}(\mathcal{X})$ is endowed with the topology of weak convergence $\mathcal{T}_w$ \cite{Billingsley1999}, which is generated by open sets of the form
\[
\big\lbrace \mu \in \mathcal{P}(\mathcal{X}) \; : \big| \mathbb{E}^{\mu}[g] - x \big| < \delta \big\rbrace,
\]
for any continuous and bounded $g: \mathcal{X} \to \mathbb{R}$, any $x,\delta \in \mathbb{R}$.  If $\mathcal{X}$ possesses a metric $d$, then let $d_W$ denote the Wasserstein metric on $\mathcal{P}(\mathcal{X})$, i.e.
\[
d_W(\mu,\nu) = \inf_{\eta}\mathbb{E}^{\eta}[d(x,y)],
\]
the infimum being taken over all $\eta \in \mathcal{P}(\mathcal{X}\times\mathcal{X})$ whose marginal law of the first variable $x$ is $\mu$, and marginal law of the second variable $y$ is $\nu$. Also $\mathcal{C}([0,T],\mathcal{X})$ is endowed with the topology generated by the metric
\begin{equation}
D\big( \lbrace x_s \rbrace_{s\in [0,T]} , \lbrace y_s \rbrace_{s\in [0,T]} \big) = \sup_{s\in [0,T]}d(x_s,y_s).
\end{equation}
$\mathbb{R}^d$ and $\mathcal{E}$ are endowed with the standard Euclidean norm $\norm{\cdot}$.The index set of the particles is written $I_n = \lbrace -n,-n+1,\ldots,n-1,n \rbrace$.

\section{Statement of Problem and Main Results}

Let $w^{ij} \in \lbrace 0, 1\rbrace$ specify the strength of connection between nodes $i$ and $j$. For the moment we make little assumptions on $w^{ij}$, although later on we will take $w^{ij}$ to be sampled randomly from a distribution. Define the total connection strength at node $j$ to be $\kappa_n^j$, i.e.
\begin{equation}
\kappa^j_n =\sum_{k=-n}^n w^{jk}.
\end{equation}
The following dynamics becomes singular when $\kappa^j_n = 0$. Thus we assume that no vertex is disconnected from the rest of the graph, i.e. for all $n > 0$,
\begin{equation} \label{eq: kappa j n}
\inf_{j\in I_n}\kappa^j_n > 0.
\end{equation}
The dynamics of the discrete network is assumed to take the form
\begin{align}\label{eq: u dynamics}
\frac{du^k}{dt} &= G(u^k(t))+ \frac{1}{\kappa^k_n}\sum_{j \in  \Xi^n_k} w^{kj}f\big(u^k(t), u^j(t) \big),
\end{align}
where $\Xi^n_k = \lbrace j\in I_n : w^{kj} =1 \rbrace$, and $I_n = \lbrace -n,-n+1,\ldots,n-1,n\rbrace$. Notice that the interaction is scaled by the total input, similarly to (for instance) \cite{Oliveira2019,Lucon2020}. 

Here $G,f: \mathbb{R}^d \to \mathbb{R}^d$ are bounded and uniformly Lipschitz. The initial conditions $\lbrace u^i_{*} \rbrace_{i \geq 1}$ may also depend on $n$ (this is omitted from the notation). It is assumed that there exists a uniform bound
\begin{equation}
\sup_{n\geq 0}\sup_{k\in I_n} \norm{u^k_{*}} \leq C_{ini}.
\end{equation}
It is assumed that
\begin{equation}
u^j_* = u^k_* \text{ if and only if }j=k.
\end{equation}
The empirical measure of afferent inputs on neuron $j$ is written as
\begin{equation}
\hat{\mu}^n_j = \frac{1}{\kappa^n_j}\sum_{k\in \Xi^n_j}\delta_{u^k_*} \in \mathcal{P}\big(\mathbb{R}^d\big).
\end{equation}
The initial empirical measure is
\begin{equation}\label{eq: empirical measure}
\hat{\mu}^n_* = (2n+1)^{-1} \sum_{j=-n}^n \delta_{( u^j_* , \hat{\mu}^n_j)} \in \mathcal{P}\big( \mathbb{R}^d \times \mathcal{P}(\mathbb{R}^d)\big).
\end{equation}
We note that this empirical measure is richer (i.e. carries more information) than is usual for interacting particle systems (such as for instance \cite{Tanaka1982,Sznitman1989}). This is necessary for us to define the push-forward operator $\Psi$ of Theorem \ref{Theorem Psi}. The empirical measure, without the connections, is written
\begin{equation}\label{eq: hat hat mu n}
\hat{\hat{\mu}}^n = (2n+1)^{-1}\sum_{j=-n}^n \delta_{u^j_*}.
\end{equation}
Let 
\begin{equation}
\mathcal{D} = \big\lbrace y\in \mathbb{R}^d \; : \norm{y}  \leq C_{ini}  \big\rbrace .
\end{equation}
Let $\mathcal{P}(\mathcal{D}) \subset \mathcal{P}(\mathbb{R}^d)$ be the set of all measures that are supported on $\mathcal{D}$, i.e. $\mu(y \in \mathcal{D}) = 1$. Observe that $\hat{\mu}^n_* \in \mathcal{P}(\mathcal{D})$. 

Define the empirical measure containing the trajectories upto time $t$,
\begin{equation}\label{eq: empirical measure trajectories}
\hat{\mu}^n_t = \frac{1}{2n+1}\sum_{j \in I_n } \delta_{u^j_{[0,t]} } \in \mathcal{P}\big(\mathcal{C}([0,t],\mathbb{R}^d) \big),
\end{equation}
where $u^j_{[0,t]} := \lbrace u^j_s \rbrace_{0\leq s \leq t}$.  

The significance of our first main result (the following theorem) is that the same mapping $\Psi$ holds for all $n$ and every possible empirical measure (subject to the above assumptions). It will allow us to transfer the convergence / Large Deviations of $\hat{\mu}^n_*$ to the convergence / Large Deviations of $\hat{\mu}^n_t$.
\begin{theorem}\label{Theorem Psi}
(i) For any $T\geq 0$, there exists a measurable map $\Psi: \mathcal{P}(\mathcal{D} \times \mathcal{P}(\mathcal{D})) \mapsto \mathcal{P}( \mathcal{C}([0,T],\mathcal{D}))$ (defined in Lemma \ref{Lemma Psi Limit} below) such that
\begin{equation}
\hat{\mu}^n_T = \Psi \cdot \hat{\mu}^n_*,
\end{equation}
(ii) $\Phi$ is injective.\\
(iii) There exists a topology $\tilde{\mathcal{T}}$ (this is defined at the start of Section \ref{Section Psi Definition}) such that $ \Psi :\big( \mathcal{P}(\mathcal{D} \times \mathcal{P}(\mathcal{D})) , \tilde{\mathcal{T}} \big) \mapsto \big(  \mathcal{P}( \mathcal{C}([0,T],\mathcal{D})), \mathcal{T}_w \big)$ is continuous. The topology $\tilde{\mathcal{T}}$ is Hausdorff and separable, and is a refinement of the weak topology over $\mathcal{P}\big(\mathcal{D} \times \mathcal{P}(\mathcal{D})\big)$. $ \mathcal{P}(\mathcal{D} \times \mathcal{P}(\mathcal{D})) $ is compact with respect to $\tilde{\mathcal{T}}$.
\end{theorem}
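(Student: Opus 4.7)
To construct $\Psi$, I would first disintegrate $\nu(dx,dm) = \hat{\hat{\mu}}(dx)\,q_x(dm)$, where $\hat{\hat{\mu}} = \nu \circ \pi_1^{-1} \in \mathcal{P}(\mathcal{D})$ is the $\mathcal{D}$-marginal and $x \mapsto q_x$ is the regular conditional probability. For each pair $(x,m) \in \mathrm{supp}(\nu)$ I would seek a trajectory $U_{x,m} \in \mathcal{C}([0,T],\mathcal{D})$ satisfying the coupled Volterra equation
\[
U_{x,m}(t) = x + \int_0^t G\bigl(U_{x,m}(s)\bigr)\, ds + \int_0^t\!\int_{\mathcal{D}}\!\int_{\mathcal{P}(\mathcal{D})} f\bigl(U_{x,m}(s), U_{y,m'}(s)\bigr)\, q_y(dm')\, m(dy)\, ds.
\]
Because $G$ and $f$ are bounded and uniformly Lipschitz, a Picard iteration on $\mathcal{C}([0,T],\mathcal{D})^{\mathrm{supp}(\nu)}$ under the uniform sup--sup metric contracts, and a Gronwall estimate produces a unique bounded measurable solution. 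I then set $\Psi(\nu) := \nu \circ \Upsilon^{-1}$ with $\Upsilon(x,m) := U_{x,m}$, giving a measurable map into $\mathcal{P}(\mathcal{C}([0,T],\mathcal{D}))$. For the $n$-particle empirical measure each conditional $q_{u^j_*}$ is a Dirac at $\hat{\mu}^n_j$, so the coupled equation collapses to \eqref{eq: u dynamics}, yielding $\Psi \cdot \hat{\mu}^n_* = \hat{\mu}^n_T$.

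\textbf{Plan for (iii).} I would define $\tilde{\mathcal{T}}$ as the topology generated by a countable family of nested functionals of the form
\[
\nu \mapsto \int_{\mathcal{D}\times\mathcal{P}(\mathcal{D})} g\bigl(x, \mathbb{E}^m[h_1], \ldots, \mathbb{E}^m[h_\ell]\bigr)\,\nu(dx,dm),
\]
for $g \in \mathcal{C}_b(\mathcal{D} \times \R^\ell)$ and $h_i$ drawn from a countable dense subfamily of $\mathcal{C}_b(\mathcal{D})$, augmented with further functionals obtained by substituting the finite Picard approximants of the integral equation above in place of the $h_i$. Since $\mathcal{D}$ and $\mathcal{P}(\mathcal{D})$ are compact, each generator is actually $\mathcal{T}_w$-continuous; thus $\tilde{\mathcal{T}}$ is a refinement of $\mathcal{T}_w$ in terms of its generating subbasis, but on the compact base space it necessarily coincides with $\mathcal{T}_w$ (by the uniqueness of compact Hausdorff topologies refining one another), inheriting compactness, Hausdorffness, and separability at once. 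Continuity of $\Psi : (\mathcal{P}(\mathcal{D}\times\mathcal{P}(\mathcal{D})), \tilde{\mathcal{T}}) \to (\mathcal{P}(\mathcal{C}([0,T],\mathcal{D})), \mathcal{T}_w)$ is proved inductively along the Picard iteration: each finite iterate is built out of finitely many of the generators, so $\nu_n \to \nu$ in $\tilde{\mathcal{T}}$ forces convergence of every iterate, and Gronwall propagates this convergence to the fixed point.

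\textbf{Plan for (ii).} Injectivity reduces to reconstructing $\nu$ from $\Psi(\nu)$. The time-zero marginal $\Psi(\nu) \circ \mathrm{ev}_0^{-1}$ already recovers $\hat{\hat{\mu}}$. For each $x$, I would then read the conditional $q_x$ off the trajectory through the identity
\[
\dot U_{x,m}(t) - G(U_{x,m}(t)) = \int\!\int f\bigl(U_{x,m}(t), U_{y,m'}(t)\bigr)\, q_y(dm')\, m(dy),
\]
valid for all $t \in [0,T]$. Varying $t$ produces a dense family of linear functionals of $m$ tested against $f$ composed with trajectories; the Lipschitz regularity of $f$ combined with the distinctness hypothesis $u^j_* = u^k_* \Leftrightarrow j = k$ is enough to separate distinct $m$'s, so $(x,m) \mapsto U_{x,m}$ is injective on $\mathrm{supp}(\nu)$, whence $\Psi$ itself is injective.

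\textbf{Main obstacle.} I expect the hardest step to be establishing continuity in (iii) when the disintegration $q$ is non-atomic: the right-hand side nests integrals against both $m$ and $q_y$, and weak convergence of $\nu_n$ must be transferred through both layers simultaneously. My plan is to approximate $q$ by atomic discretizations matching the structure of the $n$-particle empirical measure, control the approximation error using the Lipschitz regularity of $f, G$ via Gronwall, and then pass to the limit via an equicontinuity argument for the Picard iterates.
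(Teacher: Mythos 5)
Your plan for part (i) is essentially sound and runs parallel to the paper's construction: the paper realizes the same object as the limit of $m$-step Euler discretizations $\Psi_m$, extended by continuity from a dense family of empirical measures, whereas you would solve the self-consistent Volterra equation directly by Picard iteration; under the distinctness hypothesis the conditional $q_{u^j_*}$ is indeed $\delta_{\hat{\mu}^n_j}$, so your equation collapses to \eqref{eq: u dynamics} as claimed.

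Part (iii), however, contains a genuine gap. You assert that every generating functional of your $\tilde{\mathcal{T}}$ --- including those obtained by substituting Picard approximants for the $h_i$ --- is $\mathcal{T}_w$-continuous, and you then conclude $\tilde{\mathcal{T}}=\mathcal{T}_w$ from the rigidity of compact Hausdorff topologies, obtaining compactness and separability ``for free.'' This is self-defeating: if $\tilde{\mathcal{T}}$ coincided with the weak topology, then $\Psi$ would be weakly continuous and no refinement would have been needed in the first place. In fact $\Psi$ is \emph{not} $\mathcal{T}_w$-continuous, and the failure occurs precisely in your augmented generators: already the first nontrivial Picard approximant involves the disintegration $y\mapsto q_y$ of $\nu$, and the map $\nu\mapsto q_\cdot$ is not weakly continuous (weak convergence of empirical measures $\nu^n$ controls the joint law of $(y,m)$ but not which neighbourhood measure is attached to the atom near a given $y$; moving two atoms with very different attached measures together produces a limit whose conditional kernel is not the limit of the kernels). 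Hence the augmented functionals are not $\mathcal{T}_w$-continuous, $\tilde{\mathcal{T}}$ is a strict refinement of $\mathcal{T}_w$, and compactness, separability and the continuity of $\Psi$ must all be established directly rather than inherited. This is exactly the work the paper does: it introduces the nested spaces $\mathcal{V}_m$ and the unfolding maps $\Phi_m$ (built by iterating the conditional kernel $\Lambda_{\mu,x}$), takes $\mathcal{T}_m$ to be the initial topology of $\Phi_m$, proves separability by exhibiting the countable dense family $\Upsilon_1$ of empirical measures, proves compactness by pulling finite subcovers back through $\Phi_m$ from the compact $\mathcal{V}_m$ and then invoking Tychonoff for the join of the $\mathcal{T}_m$, and only then shows each Euler map $\Psi_m$ factors continuously through $\Phi_m$. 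Your inductive continuity argument for $\Psi$ (``each finite iterate is built out of finitely many of the generators'') presupposes this missing step, since the iterates are nonlinear functionals of $\nu$ through the disintegration at every depth; and your sketch of (ii) would need the same care about which depth of the nested conditional structure is actually recoverable from the trajectories.
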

% consisting of two types of measure $\mathcal{W} = \mathcal{W}_1 \cup \mathcal{W}_2$, with $\mathcal{W}_1$ consisting of empirical measures, and $\mathcal{W}_2$ consisting of measures with a density $\zeta(y)dy$, such that for some $\epsilon > 0$,
%\begin{equation}
%0 \leq \epsilon \leq \inf_{y\in \mathcal{D}}\zeta(y) \leq  \sup_{y\in \mathcal{D}}\zeta(y) \leq \epsilon^{-1}\label{eq: density lower bound}
%\end{equation}
The following theorem is the second main result of this paper. It is significant that the space in which the empirical measure lives in - i.e. $ \mathcal{P}(\mathcal{D}\times \mathcal{P}(\mathcal{D})) $ - only needs to be endowed with the topology of weak convergence. 
\begin{theorem}\label{Theorem 1}
Suppose that $\lbrace u^j_* , w^{jk} \rbrace_{j,k \in I_n}$ are (possibly correlated) random variables. Let \newline$\Pi^n \in \mathcal{P}\big( \mathcal{P}(\mathcal{D}\times \mathcal{P}(\mathcal{D})) \big)$ be the probability law of $\hat{\mu}^n_*$. Suppose that $\lbrace \Pi^n \rbrace_{n\in \mathbb{Z}^+}$ satisfy a Large Deviation Principle, where $ \mathcal{P}(\mathcal{D}\times \mathcal{P}(\mathcal{D})) $ is equipped with the weak topology $\mathcal{T}_w$, with good rate function\footnote{A good rate function is lower semicontinuous and has compact level sets \cite{Dembo1998}.} $I: \mathcal{P}\big(\mathcal{D} \times \mathcal{P}(\mathcal{D})\big) \mapsto \mathbb{R}^+$. This means that for open $O \in \mathcal{B}\big(\mathcal{P}(\mathcal{D} \times \mathcal{P}(\mathcal{D}))\big)$ and closed $F \in\mathcal{B}\big(\mathcal{P}(\mathcal{D} \times \mathcal{P}(\mathcal{D}))\big)$,
\begin{align}
\lsup{n}\alpha_n^{-1} \log \Pi^n\big( \hat{\mu}^n_* \in F \big) &\leq -\inf_{\nu \in F}I(\nu)\label{eq: LDP assumed 1 prior} \\
\linf{n}\alpha_n^{-1}\log \Pi^n\big( \hat{\mu}^n_* \in O \big) &\geq -\inf_{\nu \in O}I(\nu),\label{eq: LDP assumed 1} 
\end{align}
where $\alpha_n \to \infty$ as $n\to\infty$. 
%
%Let $\mathcal{W} \subset  \mathcal{P}(\mathcal{D})$ be some set that is closed with respect to the weak topology. It is assumed that
%\begin{equation}\label{eq: density assumption}
%\lsup{n}\alpha_n^{-1}\log \mathbb{P}\big( \hat{\hat{\mu}}^n_* \notin \mathcal{W}\big) \leq - \sup_{\nu \in \mathcal{P}(\mathcal{D} \times \mathcal{P}(\mathcal{D}))}I(\nu).
%\end{equation}
Then, writing $\tilde{\Pi}^n \in \mathcal{P}\big(\mathcal{P}(\mathcal{C}([0,T],\mathbb{R}^d ))\big)$ to be the probability law of $\hat{\mu}^n_T$, $\lbrace \tilde{\Pi}^n \rbrace_{n\geq 1}$ satisfy a Large Deviation Principle with rate function $H: \mathcal{P}(\mathcal{C}([0,T],\mathbb{R}^d)) \mapsto \mathbb{R}^+$, where $H = I \circ \Psi^{-1}$, i.e.
\begin{align}
\lsup{n}\alpha_n^{-1}\log \tilde{\Pi}^n\big( \hat{\mu}^n_T \in F_T \big) &\leq -\inf_{\nu \in F_T}H(\nu) \\
\linf{n}\alpha_n^{-1}\log \tilde{\Pi}^n\big( \hat{\mu}^n_T \in O_T \big) &\geq -\inf_{\nu \in O_T}H(\nu),
\end{align}
and $F_T,O_T \in \mathcal{B}\big( \mathcal{P}(\mathcal{C}([0,T],\mathbb{R}^d))\big)$ are sets that are (respectively) closed and open with respect to the weak topology.
\end{theorem}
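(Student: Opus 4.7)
The plan is to apply the Contraction Principle, though some care is needed because the LDP hypothesis for $\Pi^n$ is formulated in the weak topology $\mathcal{T}_w$, whereas Theorem \ref{Theorem Psi}(iii) only provides continuity of $\Psi$ with respect to the finer topology $\tilde{\mathcal{T}}$. The reconciling observation is that $\mathcal{P}(\mathcal{D}\times\mathcal{P}(\mathcal{D}))$ is compact in \emph{both} topologies: it is $\tilde{\mathcal{T}}$-compact by Theorem \ref{Theorem Psi}(iii), and it is $\mathcal{T}_w$-compact because $\mathcal{D}$ is a compact subset of $\mathbb{R}^d$, so that $\mathcal{D}\times\mathcal{P}(\mathcal{D})$ is compact Polish and its space of Borel probability measures is weakly compact by Prokhorov's theorem.

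The central technical step will be the following topological lemma: for every $\mathcal{T}_w$-closed $F_T \subseteq \mathcal{P}(\mathcal{C}([0,T],\mathbb{R}^d))$, the preimage $\Psi^{-1}(F_T)$ is $\mathcal{T}_w$-closed in $\mathcal{P}(\mathcal{D}\times\mathcal{P}(\mathcal{D}))$. To prove it, I would note that $\Psi^{-1}(F_T)$ is $\tilde{\mathcal{T}}$-closed by continuity of $\Psi$, hence $\tilde{\mathcal{T}}$-compact as a closed subset of a $\tilde{\mathcal{T}}$-compact space. Since $\tilde{\mathcal{T}}$ refines $\mathcal{T}_w$, the identity map from $\mathcal{P}(\mathcal{D}\times\mathcal{P}(\mathcal{D}))$ equipped with $\tilde{\mathcal{T}}$ to the same set equipped with $\mathcal{T}_w$ is continuous, so $\Psi^{-1}(F_T)$ is $\mathcal{T}_w$-compact, and hence $\mathcal{T}_w$-closed (as $\mathcal{T}_w$ is Hausdorff). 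Taking complements, the preimage of any $\mathcal{T}_w$-open set is $\mathcal{T}_w$-open as well.

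With this lemma in hand, the two LDP inequalities follow immediately by applying the hypothesized LDP for $\Pi^n$ to $\Psi^{-1}(F_T)$ and $\Psi^{-1}(O_T)$, using the identity $\tilde{\Pi}^n(A) = \Pi^n(\Psi^{-1}(A))$. Injectivity of $\Psi$ (Theorem \ref{Theorem Psi}(ii)) then gives
\[
\inf_{\mu \in \Psi^{-1}(A)} I(\mu) = \inf_{\nu \in A} H(\nu),
\]
with $H = I\circ \Psi^{-1}$ extended by $+\infty$ outside the image of $\Psi$. To finish, I would verify that $H$ is a good rate function: by injectivity, $\{H \leq \alpha\} = \Psi(\{I\leq \alpha\})$; since $\{I\leq \alpha\}$ is $\mathcal{T}_w$-compact by goodness of $I$, it is also $\tilde{\mathcal{T}}$-closed (the finer topology has more closed sets), hence $\tilde{\mathcal{T}}$-compact, so its image under the $\tilde{\mathcal{T}}$-to-$\mathcal{T}_w$-continuous map $\Psi$ is $\mathcal{T}_w$-compact.

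The only non-routine part of the argument is the topological lemma above; the need to exploit compactness of $\mathcal{P}(\mathcal{D}\times\mathcal{P}(\mathcal{D}))$ in \emph{both} topologies is what allows one to circumvent the fact that $\Psi$ is not a priori $\mathcal{T}_w$-continuous. Once this has been settled, the rest is a textbook application of the Contraction Principle.
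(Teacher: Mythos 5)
Your argument is correct, and it reaches the conclusion by a genuinely different (though closely related) route from the paper. The paper proceeds in two steps: Lemma \ref{Lemma tilde T LDP} first upgrades the assumed weak-topology LDP for $\lbrace \Pi^n\rbrace$ to an LDP with respect to $\tilde{\mathcal{T}}$ via the inverse contraction principle, the required exponential tightness being supplied by the $\tilde{\mathcal{T}}$-compactness of $\mathcal{P}(\mathcal{D}\times\mathcal{P}(\mathcal{D}))$ (Lemma \ref{Lemma Upsilon Dense}); it then applies the ordinary contraction principle to the $\tilde{\mathcal{T}}$-continuous map $\Psi$ of Lemma \ref{Lemma Psi Limit}. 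You instead keep the LDP in the weak topology throughout and show directly that $\Psi$ pulls back weakly closed sets to weakly closed sets, via the standard ``closed in compact is compact, and a continuous image of a compact set in a Hausdorff space is closed'' argument; this lets you invoke \eqref{eq: LDP assumed 1 prior} and \eqref{eq: LDP assumed 1} with no change of topology. Both proofs rest on exactly the same two ingredients from Theorem \ref{Theorem Psi} --- the $\tilde{\mathcal{T}}$-compactness of the domain and the $\tilde{\mathcal{T}}$-to-$\mathcal{T}_w$ continuity of $\Psi$ --- so neither is more demanding than the other; yours is somewhat more self-contained in that it avoids citing the inverse contraction principle, at the cost of re-proving by hand the topological fact that principle encapsulates. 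It is worth observing that your key lemma is precisely the assertion that $\Psi$ is continuous from $\big(\mathcal{P}(\mathcal{D}\times\mathcal{P}(\mathcal{D})),\mathcal{T}_w\big)$ to $\big(\mathcal{P}(\mathcal{C}([0,T],\mathbb{R}^d)),\mathcal{T}_w\big)$; indeed the same compact-versus-Hausdorff comparison, applied to the identity map, shows that $\tilde{\mathcal{T}}$ and $\mathcal{T}_w$ must agree on this compact set, which is the common source of both arguments. Your identification of the rate function via injectivity of $\Psi$ and your verification that $H$ is good (the level sets $\Psi(\lbrace I\leq\alpha\rbrace)$ being weakly compact) are both sound.
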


\subsection{Large Deviations of Particle Systems on Sparse Erdos-Renyi Graphs}\label{Subsection LDP Sparse}

We now outline a particular application of the above results to sparse random graphs. In the last ten years there has been an increased emphasis on applying the methods of statistical mechanics to sparse random graphs \cite{Delattre2016,Lacker2019,Bhamidi2019,Dembo2010}.  The first component of $u^j(t)$ is taken to be a static position variable $\theta^j_n$ lying in the ring $(-\pi,\pi]$.  The position variables are evenly spaced, i.e. $\theta^j_n = 2\pi j / (2n+1) \mod \mathbb{S}^1$. The other initial conditions $\lbrace u^{j,p}_{*} \rbrace_{2\leq p\leq d}$ are fixed and non-random, and chosen to be a function of the first position variable, i.e.
\begin{equation}\label{eq: u j star}
u^j_* := U\big(\theta^j_n\big).
\end{equation}
Here $U: \mathbb{S}^1 \to \mathbb{R}^{d}$ is bounded and continuous. We follow the convention that self-connections are always present, i.e. $w^{jj} = 1$ identically. This ensures that \eqref{eq: kappa j n} is satisfied, and therefore the dynamics in \eqref{eq: u dynamics} is non-singular. 

When $i\neq j$, we take $w_{ij} = 1 $ with probability $\rho_n\mathcal{C}(\theta^i_n , \theta^j_n)$, for some prescribed continuous bounded function $\mathcal{C}: \mathbb{S}^1 \times \mathbb{S}^1 \to \mathbb{R}$, such that $\mathcal{C}(\theta,\beta) > 0$. The edges $\lbrace w_{ij} \rbrace_{i,j \in I_n}$ are sampled independently. It is assumed that the following limits exist 
\begin{align}
\lim_{n\to\infty}\rho_n =0  \; \; , \; \;
\lim_{n\to\infty}\lbrace  n\rho_n \rbrace = \infty. \label{eq: rho n assumption}
\end{align}
The rest of the assumptions are as outlined in the previous section. Since $\lim_{n\to\infty}\rho_n = 0$, the network is \textit{sparse}, such that the typical number of edges afferent on each node is much less than the system size. One could easily obtain similar results in the case that $\rho_n = O(1)$.

 Define $\Gamma:  \mathbb{S}^1 \times \mathcal{P}(\mathbb{S}^1)\to \mathbb{R}^d \times \mathcal{P}(\mathbb{R}^d)$ to be the function
\begin{equation}\label{eq: Gamma map}
(\theta,\mu) \to \big(U(\theta), \mu \circ U^{-1}),
\end{equation}
and we recall the definition of $U$ in \eqref{eq: u j star}. The second main result of this paper is the following. 

\begin{theorem}\label{Theorem Sparse LDP}
Suppose that \eqref{eq: u j star} and \eqref{eq: rho n assumption} hold, and that $\lbrace w^{jk} \rbrace_{j,k \in I_n}$ are sampled independently.

Let $F_T,O_T \subset \mathcal{P}\big(\mathcal{C}([0,T],\mathbb{R}^d ) \big)$, with $F_T$ closed and $O_T$ open (with respect to the weak topology $\mathcal{T}_w$). Then
\begin{align}
\lsup{n}\lbrace \rho_n (2n+1)^2 \rbrace^{-1}\log \mathbb{P}\big( \hat{\mu}^n_T \in F_T \big) &\leq -\inf_{\nu \in F_T}H(\nu) \\
\linf{n}\lbrace \rho_n (2n+1)^2 \rbrace^{-1}\log \mathbb{P}\big( \hat{\mu}^n_T \in O_T \big) &\geq -\inf_{\nu \in O_T}H(\nu).
\end{align}
Here $H : \mathcal{P}\big( \mathcal{C}([0,T],\mathbb{R}^d ) \big) \to \mathbb{R}^+$ has compact level sets, and is such that
\begin{align}
H(\nu) &= \inf_{\mu \in \mathcal{P}(\mathbb{S}^1\times\mathcal{P}(\mathbb{S}^1)) }\big\lbrace \tilde{I}(\mu) : \Psi \cdot (\mu \circ \Gamma^{-1})= \nu \big\rbrace \text{ where } \\
\tilde{I}:&\mathcal{P}(\mathbb{S}^1\times\mathcal{P}(\mathbb{S}^1)) \to \mathbb{R}^+ \text{ is such that }\tilde{I}(\mu) = \mathbb{E}^{\mu}[ I_\theta(\alpha)] \text{ where }\label{eq: tilde I rate function}
\end{align}
in the above expectation $\theta \in \mathbb{S}^1$, $\alpha \in \mathcal{P}(\mathbb{S}^1)$ and $\Gamma$ is defined in \eqref{eq: Gamma map}. $I_\theta : \mathcal{P}(\mathbb{S}^1) \to \mathbb{R}^+$ is defined as follows. In the case that $\mu$ does not have a density, 
\begin{equation}
I_\theta(\mu) = \frac{1}{2\pi}\int_{\mathbb{S}^1} \mathcal{C}(\theta,\eta)d\eta . 
\end{equation}
Otherwise, in the case that $d\mu(\theta) = \frac{\zeta(\theta)}{2\pi} d\theta$, define
\begin{align}
I_\theta(\mu) = \frac{1}{2\pi}\int_{\mathbb{S}^1}\mathcal{C}(\theta,\eta) d\eta - \exp\bigg(-\frac{1}{2\pi}\int_{\mathbb{S}^1} \zeta(\eta) \log\bigg(\frac{\zeta(\eta)}{C(\theta,\eta)} \bigg)d\eta  \bigg),
\end{align}
following the convention that $0\log(0)= 0$ in the second integral on the right hand side.
\end{theorem}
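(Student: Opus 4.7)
The plan is to establish a weak-topology LDP for $\hat{\mu}^n_*$ at speed $\alpha_n = \rho_n(2n+1)^2$, and then invoke Theorem \ref{Theorem 1} (which transfers it to $\hat{\mu}^n_T$ via $\Psi$). Because $u^j_* = U(\theta^j_n)$ depends only on the deterministic position, it is natural to introduce the \emph{position-based} nested empirical measure
\[
\tilde{\mu}^n_j = \frac{1}{\kappa^n_j}\sum_{k\in\Xi^n_j}\delta_{\theta^k_n}\in \mathcal{P}(\mathbb{S}^1), \qquad \tilde{\mu}^n_* = \frac{1}{2n+1}\sum_{j=-n}^n\delta_{(\theta^j_n,\tilde{\mu}^n_j)}\in\mathcal{P}(\mathbb{S}^1\times\mathcal{P}(\mathbb{S}^1)).
\]
Identity \eqref{eq: u j star} yields $\hat{\mu}^n_* = \tilde{\mu}^n_*\circ\Gamma^{-1}$, and since $U$ is bounded and continuous, $\Gamma$ is weakly continuous. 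Hence, by the contraction principle, it suffices to prove a weak-topology LDP for $\tilde{\mu}^n_*$ on the compact space $\mathcal{P}(\mathbb{S}^1\times\mathcal{P}(\mathbb{S}^1))$ at speed $\alpha_n$ and rate function $\tilde{I}$ from \eqref{eq: tilde I rate function}.

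This LDP is derived in two nested steps. \textbf{Inner step:} fix $j$ and suppose $\theta^j_n\to\theta$. The edges $\{w^{jk}\}_{k\in I_n}$ are independent Bernoullis with parameter $\rho_n\mathcal{C}(\theta^j_n,\theta^k_n)$, and I analyse the unnormalized counting measure $\nu^n_j := \sum_k w^{jk}\delta_{\theta^k_n}$. Using $\log(1+p_n x) = p_n x + o(p_n)$ uniformly on compacts and the Riemann-sum convergence of the evenly spaced $\theta^k_n$,
\[
\frac{1}{\rho_n(2n+1)}\log\mathbb{E}\bigl[\exp\langle f,\nu^n_j\rangle\bigr] \xrightarrow[n\to\infty]{} \frac{1}{2\pi}\int_{\mathbb{S}^1}(e^{f(\eta)}-1)\,\mathcal{C}(\theta,\eta)\,d\eta =: \Lambda_\theta(f),
\]
so Gärtner-Ellis yields an LDP for $\nu^n_j/(\rho_n(2n+1))$ at speed $\rho_n(2n+1)$ with rate function $\Lambda_\theta^*$. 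The contraction to the normalized measure $\tilde{\mu}^n_j$ amounts to optimizing over the total mass $r := \nu^n_j(\mathbb{S}^1)/(\rho_n(2n+1))$. For $\mu \ll d\eta$ with density $\zeta/(2\pi)$, a direct computation yields
\[
\Lambda_\theta^*(r\mu) = r\log r + r\bigl(H(\theta,\zeta)-1\bigr) + \bar{\mathcal{C}}(\theta),\quad H(\theta,\zeta) = \frac{1}{2\pi}\int_{\mathbb{S}^1}\zeta(\eta)\log\bigl(\zeta(\eta)/\mathcal{C}(\theta,\eta)\bigr)\,d\eta,
\]
with $\bar{\mathcal{C}}(\theta) := \frac{1}{2\pi}\int\mathcal{C}(\theta,\eta)d\eta$; this is minimized at $r^\ast = e^{-H(\theta,\zeta)}$ with value $\bar{\mathcal{C}}(\theta)-e^{-H(\theta,\zeta)}$, matching the stated $I_\theta(\mu)$. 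For $\mu \not\ll d\eta$ the infimum is attained at $r=0$ (the empty-graph event), giving $I_\theta(\mu) = \bar{\mathcal{C}}(\theta)$.

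\textbf{Outer step:} because $\{w^{jk}\}$ are independent, the $\{\tilde{\mu}^n_j\}_j$ are mutually independent (though not identically distributed, their laws depending on the deterministic $\theta^j_n$). Combining independence with Varadhan's lemma applied to each inner LDP,
\[
\frac{1}{\alpha_n}\log\mathbb{E}\Bigl[\exp\Bigl(\alpha_n\int\phi\,d\tilde{\mu}^n_*\Bigr)\Bigr] \xrightarrow[n\to\infty]{} \frac{1}{2\pi}\int_{\mathbb{S}^1}\sup_{\alpha\in\mathcal{P}(\mathbb{S}^1)}\bigl[\phi(\theta,\alpha) - I_\theta(\alpha)\bigr]\,d\theta
\]
for bounded continuous $\phi$ on $\mathbb{S}^1\times\mathcal{P}(\mathbb{S}^1)$. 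Taking the Legendre transform fiberwise yields $\tilde{I}(\mu) = \mathbb{E}^\mu[I_\theta(\alpha)]$ on measures whose $\mathbb{S}^1$-marginal is uniform; one extends by $+\infty$ otherwise, but this is harmless since the $\mathbb{S}^1$-marginal of $\tilde{\mu}^n_*$ is deterministic and concentrates on uniform. Goodness of $\tilde{I}$ follows from lower semicontinuity and compactness of the ambient space.

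The contraction principle applied to $\Gamma$ transfers the LDP to $\hat{\mu}^n_*$ in the weak topology on $\mathcal{P}(\mathcal{D}\times\mathcal{P}(\mathcal{D}))$, and Theorem \ref{Theorem 1} then delivers the LDP for $\hat{\mu}^n_T$ with rate function $H(\nu) = \inf\{\tilde{I}(\mu):\Psi(\mu\circ\Gamma^{-1})=\nu\}$, as claimed. The main obstacle I anticipate is the inner step: the sparse regime ($\rho_n\to 0$, $n\rho_n\to\infty$) forces a Poisson-type analysis, and the contraction from the unnormalized Poisson-intensity LDP to the normalized empirical measure $\tilde{\mu}^n_j$ must carefully track the hidden total-mass variable $r$. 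The two-case form of $I_\theta$ (with $e^{-H}$ correction for absolutely continuous $\mu$ vs.\ bare $\bar{\mathcal{C}}(\theta)$ otherwise) arises directly from this optimization and encapsulates the uniform boundedness of the rate function highlighted in the introduction.
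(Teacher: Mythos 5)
Your inner step is essentially the paper's own argument: the paper also applies a G\"artner--Ellis computation to the \emph{unnormalized} measure $\tilde{\mu}^n_j\in\mathcal{M}^+(\mathbb{S}^1\times\mathcal{E})$ (Lemma \ref{Lemma LDP big space}), obtains the rate $\frac{1}{2\pi}\int\{\gamma\log(\gamma/\mathcal{C})-\gamma+\mathcal{C}\}$, and then contracts through the normalization map by minimizing over the total-mass parameter $a$ (your $r$), with minimizer $a_*=e^{-H}$ and value $\bar{\mathcal{C}}(\theta)-e^{-H}$ exactly as you compute (Lemma \ref{Lemma Infimum Rate Function}); the singular case is likewise handled via the vanishing-mass event (Lemmas \ref{Lemma Exponential Tightness} and \ref{Lemma U m limit}). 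Your computation of $\Lambda_\theta^*(r\mu)$ and its minimization is correct. One technical point you gloss over: the normalization map is discontinuous at zero mass, so the contraction must be done on $\{\nu(w=1)\geq\epsilon\}$ with a separate estimate for the small-mass event, and the lower/upper bounds must be matched in the limit $\epsilon\to 0$; the paper devotes several lemmas to this, but your sketch identifies the right mechanism.

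The outer step is where your route genuinely diverges from the paper's, and where there is a gap. The paper does \emph{not} use a G\"artner--Ellis argument at the level of $\breve{\mu}^n_*$: it first reduces to piecewise-constant $\mathcal{C}$ (so that the conditional measures $\breve{\mu}^n_j$ for $j$ in a given cell are i.i.d.), performs a direct multinomial expansion of $\mathbb{P}((\breve{\mu}^n_*(\mathcal{O}^P_1),\ldots)=z_n)$ whose combinatorial factor is $e^{O(n)}$ and hence negligible at speed $\rho_n(2n+1)^2$, and then assembles the full LDP via the Dawson--G\"artner projective-limit theorem, finally removing the piecewise-constant assumption by a change-of-measure estimate. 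Your proposed alternative --- independence plus Varadhan per factor, then ``taking the Legendre transform fiberwise'' --- gives the correct limiting cumulant functional $\Lambda(\phi)=\frac{1}{2\pi}\int\sup_\alpha[\phi(\theta,\alpha)-I_\theta(\alpha)]\,d\theta$ and hence the upper bound (exponential tightness is free since the state space is compact), but the \emph{lower} bound of the abstract G\"artner--Ellis theorem in the infinite-dimensional space $\mathcal{P}(\mathbb{S}^1\times\mathcal{P}(\mathbb{S}^1))$ requires G\^ateaux differentiability of $\Lambda$ or an exposed-point (Baldi) argument. Here $I_\theta$ is not convex (it is $\bar{\mathcal{C}}(\theta)-e^{-H(\theta,\zeta)}$), the fiberwise supremum need not be uniquely attained, and the candidate rate $\mu\mapsto\mathbb{E}^\mu[I_\theta(\alpha)]$ is affine, so neither differentiability nor a supply of exposed points is automatic; you would still need an explicit lower-bound construction, which is precisely what the paper's multinomial counting provides. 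You also need the per-factor Varadhan asymptotics to hold uniformly over $j\in I_n$ (the inner LDP is stated for a fixed sequence $\theta^{j_n}_n\to\theta$), which requires an additional continuity-in-$\theta$ argument. As written, the outer step does not close.
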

This theorem is proved in Section \ref{Section Sparse LDP}.
\section{Proof of Theorem \ref{Theorem Sparse LDP}}\label{Section Sparse LDP}

We assume the model setup of Section \ref{Subsection LDP Sparse}. We need to show that the LDP for the law of the initial condition in the statement of Theorem \ref{Theorem 1} is satisfied: i.e. we need to demonstrate \eqref{eq: LDP assumed 1 prior} and \eqref{eq: LDP assumed 1} hold for the sparse model of Section \ref{Subsection LDP Sparse}. Define 
\[
\breve{\mu}^n_* = \frac{1}{2n+1}\sum_{j\in I_n} \delta_{(\theta^j_n, \breve{\mu}^n_j)}\in  \mathcal{P}\big(\mathbb{S}^1\times \mathcal{P}(\mathbb{S}^1)\big)
\]
where the conditional empirical measures are, for $-n \leq j \leq n$,
\begin{align}
\breve{\mu}^n_{j} &= \frac{1}{|\Xi^n_j|}\sum_{k \in \Xi^n_j} \delta_{\theta^k_n} \in \mathcal{P}( \mathbb{S}^1 ),
\end{align}
where $\Xi^n_j = \lbrace k\in I_n : w^{jk} = 1 \rbrace$. 
Since $\Gamma$ (as defined in \eqref{eq: Gamma map}) is continuous and $\hat{\mu}^n_* = \breve{\mu}^n_* \circ \Gamma^{-1}$, thanks to Varadhan's Contraction Principle \cite[Theorem 4.2.1]{Dembo1998}, it suffices to prove the LDP for the sequence of laws $\lbrace \breve{\Pi}^n \rbrace_{n\geq 1} \subset \mathcal{P}\big(\mathcal{P}(\mathbb{S}^1\times\mathcal{P}(\mathbb{S}^1))\big)$ of $\breve{\mu}^n_*$.  We first assume that the connection probabilities $\mathcal{C}$ are piecewise constant.

\begin{lemma}
Suppose that there exists a partition of $\mathbb{S}^1$ into intervals, i.e. $\bigcup_{i=1}^Q R_i$ , and such that for each $\alpha \in \mathbb{S}^1$,
\begin{equation} \label{eq: connection probabilities piecewise constant}
\mathcal{C}(\theta_1,\alpha) = \mathcal{C}(\theta_2,\alpha)
\end{equation}
if $\theta_1,\theta_2 \in R_i$ for some $R_i$. Thus $\mathcal{C}$ is piecewise constant in its first argument, and continuous in its second argument.

For open $A \subset \mathcal{P}(\mathbb{S}^1\times\mathcal{P}(\mathbb{S}^1))$ and closed $F \subset \mathcal{P}(\mathbb{S}^1\times\mathcal{P}(\mathbb{S}^1))$ 
\begin{align}
\lsup{n}\big\lbrace (2n+1)^2 \rho_n \big\rbrace^{-1} \mathbb{P}\big(  \breve{\mu}^n_* \in F \big) &\leq -\inf_{\nu \in F} \tilde{I}(\nu)\label{eq:LDP 1}\\
\linf{n}\big\lbrace (2n+1)^2 \rho_n \big\rbrace^{-1} \mathbb{P}\big(  \breve{\mu}^n_* \in A \big) &\geq -\inf_{\nu \in A} \tilde{I}(\nu).\label{eq:LDP 2}
 \end{align}
 \end{lemma}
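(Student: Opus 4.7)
The argument rests on two independence features of the model. First, because the edges $\{w^{jk}\}$ are sampled independently, the random probability measures $\{\breve{\mu}^n_j\}_{j\in I_n}$ are mutually independent elements of $\mathcal{P}(\mathbb{S}^1)$. Second, the piecewise-constant hypothesis on $\mathcal{C}(\cdot,\alpha)$ forces the law of $\breve{\mu}^n_j$ to depend only on the block $R_{i(j)}$ containing $\theta^j_n$, so that the samples are i.i.d.\ within each block. The overall strategy is a two-scale large-deviations argument that combines a ``fast-scale'' LDP for a single node's conditional measure (at speed $(2n+1)\rho_n$) with a ``slow-scale'' Sanov-type aggregation over the $\sim(2n+1)$ nodes, producing the joint LDP at speed $(2n+1)^2\rho_n$ with rate $\tilde I(\mu)=\mathbb{E}^\mu[I_\theta(\alpha)]$.

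\textbf{Single-node LDP.} Fix $\theta\in R_i$ and any $j$ with $\theta^j_n=\theta$. I would work first with the un-normalised count measure $\nu^n_j=\sum_{k\in I_n}w^{jk}\delta_{\theta^k_n}\in\mathcal{M}(\mathbb{S}^1)$; its independent Bernoulli atoms of mean $\rho_n\mathcal{C}(\theta,\theta^k_n)$ are in the Poisson regime since $\rho_n\to 0$ while $(2n+1)\rho_n\to\infty$. Standard Poisson-process LDP techniques (exponential Chebyshev for the upper bound, and an exponential tilt of the edge probabilities for the lower bound) give an LDP for $\nu^n_j$ at speed $(2n+1)\rho_n$ with rate
\[
\Lambda_\theta(\nu)=\tfrac{1}{2\pi}\!\int_{\mathbb{S}^1}\!\bigl[\phi(\eta)\log(\phi(\eta)/\mathcal{C}(\theta,\eta))-\phi(\eta)+\mathcal{C}(\theta,\eta)\bigr]\,d\eta
\]
when $d\nu=\phi(\eta)\,d\eta/(2\pi)$, and $\Lambda_\theta(\nu)=\bar{\mathcal{C}}(\theta):=\tfrac{1}{2\pi}\!\int\mathcal{C}(\theta,\eta)\,d\eta$ on the atomic/zero boundary. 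Passing through the continuous normalisation $\nu\mapsto\nu/\nu(\mathbb{S}^1)$ and optimising over the total-mass parameter $K=\nu(\mathbb{S}^1)$ by a one-dimensional Lagrange multiplier then recovers exactly the closed-form $I_\theta$ of Theorem~\ref{Theorem Sparse LDP}.

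\textbf{Aggregation across nodes.} Within block $R_i$, the $m^n_i\sim(2n+1)|R_i|/(2\pi)$ random measures $\{\breve{\mu}^n_j : \theta^j_n\in R_i\}$ are i.i.d.\ with common law satisfying the single-node LDP. A standard two-scale large-deviations argument for i.i.d.\ samples whose common law itself satisfies an LDP (Sanov's theorem meeting the fast-scale LDP; see the general discussion in \cite[Ch.~6]{Dembo1998}) then yields an LDP for the block-level nested empirical measure at speed $m^n_i(2n+1)\rho_n$ with rate $\mathbb{E}^{\sigma}[I_\theta(\alpha)]$. Independence of the $Q$ blocks and the fact that $m^n_i/(2n+1)\to|R_i|/(2\pi)$ combine via a product LDP at the common speed $(2n+1)^2\rho_n$, and a direct book-keeping step identifies the aggregated rate with $\tilde I$. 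Exponential tightness in $\mathcal{P}(\mathbb{S}^1\times\mathcal{P}(\mathbb{S}^1))$ is automatic since $\mathbb{S}^1$ (and hence $\mathcal{P}(\mathbb{S}^1)$) is compact.

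\textbf{Main obstacle.} The delicate point is the single-node LDP, and specifically the degenerate boundary case $\alpha\not\ll d\eta$. This captures nodes with $\kappa^n_j=o((2n+1)\rho_n)$ neighbours, whose cost is the bare Poisson-void value $\bar{\mathcal{C}}(\theta)$ rather than the density formula. The clean way to handle it is to first establish the LDP on $\mathcal{M}(\mathbb{S}^1)$ (so that the zero measure is a legitimate point of the state space), then contract through the normalisation map $\nu\mapsto\nu/\nu(\mathbb{S}^1)$, verifying that this map is continuous away from the zero measure and that the $K\downarrow 0$ limit of the Lagrange calculation correctly reproduces the sparse-connectivity boundary term. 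Once this delicate boundary accounting is in hand, the aggregation step is essentially book-keeping.
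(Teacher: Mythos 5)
Your proposal is correct in outline and, for the genuinely hard part --- the single-node LDP --- it follows essentially the same route as the paper: Lemma \ref{Lemma LDP big space} establishes the Poisson-type LDP for the unnormalised measure $\tilde{\mu}^n_j$ on $\mathcal{M}^+(\mathbb{S}^1\times\mathcal{E})$ via a G\"artner--Ellis computation of the log-moment generating function, Lemma \ref{Lemma Infimum Rate Function} performs your one-parameter optimisation over the total mass to recover the closed form of $I_\alpha$, and Lemma \ref{Lemma U m limit} carries out exactly the boundary accounting you identify as the main obstacle, producing the void cost $\frac{1}{2\pi}\int_{\mathbb{S}^1}\mathcal{C}(\alpha,\eta)\,d\eta$ for vanishing connectivity. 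Where you diverge is the aggregation step. The paper does not invoke a two-scale Sanov meta-theorem; it runs the Dawson--G\"artner projective-limit machinery over finite partitions $\lbrace \mathcal{O}^P_i\rbrace$ of $\mathbb{S}^1\times\mathcal{P}(\mathbb{S}^1)$, proves each finite-dimensional LDP by a multinomial expansion combined with the exact single-node asymptotics of Lemma \ref{Lemma Exact Asymtotic LDP}, and then identifies the projective-limit rate with $\tilde I$ in Lemma \ref{Lemma Rate Function Equivalence}. The multinomial computation is precisely why your aggregation heuristic is valid: the combinatorial factor $\Upsilon_{n,z_n}\leq e^{Cn}$ (Sanov's entropy at speed $n$) is annihilated by the faster speed $(2n+1)^2\rho_n\gg n$, so the slow-scale relative entropy contributes nothing and only the linear functional $\mathbb{E}^{\sigma}[I_\theta]$ survives. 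Two caveats on your version. First, the within-block samples $(\theta^j_n,\breve{\mu}^n_j)$ are independent but not identically distributed: the first coordinate is a varying deterministic label, and each $\breve{\mu}^n_j$ carries a deterministic self-loop atom at $\theta^j_n$; a literal i.i.d.\ Sanov argument therefore does not apply, and one must either pass to finite partitions (as the paper does, exploiting continuity of $I_\theta$ in $\theta$) or argue these non-identical features are asymptotically negligible. Second, the ``standard two-scale LDP'' you appeal to is not stated in \cite{Dembo1998} in the form you need, so it would have to be proved; its proof is essentially the paper's counting argument, so this is a citation gap rather than a mathematical one.
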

 \begin{proof}
To prove the LDP we are going to employ the Dawson-Gartner theory of projective limits \cite{Dawson1987}, following the exposition in \cite{Dembo1998}.  Let $\mathcal{W}_m \subset  \mathcal{B}(\mathbb{S}^1 \times \mathcal{P}(\mathbb{S}^1))$ be $\mathcal{W}_m = \big\lbrace \mathcal{O}^m_i \big\rbrace_{i=1}^{m^2}$, where
\[
\mathcal{O}^m_i = \big\lbrace (\theta,\mu) \; : \; \theta \in O^m_{j_i} \text{ and }\mu \in \mathcal{Z}^m_{k_i} \big\rbrace,
\]
for some indices $1\leq j_i,k_i \leq m$. 
Let $\mathcal{T}_2$ be the topology on $\mathcal{P}\big(\mathbb{S}^1 \times \mathcal{P}(\mathbb{S}^1)\big)$ that is generated by open sets of the form
\[
\big\lbrace \mu \in \mathcal{P}\big(\mathbb{S}^1 \times \mathcal{P}(\mathbb{S}^1)\big) \; : | \mu( \mathcal{O}^m_i )- x | < \delta \big\rbrace,
\]
for $x,\delta \in \mathbb{R}$, some $m \geq 1$ and $1\leq i \leq m^2$. 

\begin{lemma}
$\mathcal{P}\big(\mathbb{S}^1 \times \mathcal{P}(\mathbb{S}^1)\big)$  is separable with respect to $\mathcal{T}_2$.\\
Also $\mathcal{T}_2$ is a refinement of the weak topology. 
\end{lemma}
\begin{proof}
Suppose that $V: \mathbb{S}^1 \times \mathcal{P}(\mathbb{S}^1) \to \mathbb{R}$ is continuous, and write $V_{max} = \sup_{\theta\in \mathbb{S}^1,\mu \in \mathcal{P}(\mathbb{S}^1)}\lbrace |V(\theta,\mu)|$. Define the set
\begin{equation}
\mathcal{V} = \big\lbrace \nu \in \mathcal{P}(\mathbb{S}^1\times\mathcal{P}(\mathbb{S}^1)) \; : \big| \mathbb{E}^{\nu}[V] - x \big| < \delta \big\rbrace,
\end{equation}
for some $x,\delta \in \mathbb{R}$. We assume that $\mathcal{V}$ is nonempty. We must prove that $\mathcal{V}$ is open with respect to $\mathcal{T}_2$. Fix $\nu \in \mathcal{V}$ such that $\big| \mathbb{E}^{\nu}[V] - x \big| = \tilde{\delta}$ and $\tilde{\delta} < \delta$. Define $\bar{V}^{(m)} , \underline{V}^{(m)}$ to be
\begin{align*}
\bar{V}^{(m)}(\theta,\mu) &= \sum_{i=1}^{m^2} \chi\big\lbrace (\theta,\mu) \in \mathcal{O}_i^m \big\rbrace \sup_{(\beta,\gamma) \in \mathcal{O}^m_i}V(\beta,\gamma) \\
\underline{V}^{(m)}(\theta,\mu) &= \sum_{i=1}^{m^2} \chi\big\lbrace (\theta,\mu) \in \mathcal{O}_i^m \big\rbrace \inf_{(\beta,\gamma) \in \mathcal{O}^m_i}V(\beta,\gamma).
\end{align*}
Since $V$ is defined on a compact domain, it must be uniformly continuous, and therefore there must exist $m$ such that
\begin{equation}
\sup_{\theta \in \mathbb{S}^1,\mu\in \mathcal{P}(\mathbb{S}^1)}\big| \bar{V}^{(m)}(\theta,\mu) - \underline{V}^{(m)}(\theta,\mu) \big| <  \frac{1}{2}( \delta - \tilde{\delta}).
\end{equation}
We then see that the set
\begin{equation}
\mathcal{U} = \big\lbrace  \mu \in \mathcal{P}(\mathbb{S}^1\times\mathcal{P}(\mathbb{S}^1)) \; : \sup_{1\leq i \leq m}\big| \mu(\mathcal{O}^m_i) - \nu(\mathcal{O}^m_i) \big| < \frac{1}{2V_{max}}( \delta - \tilde{\delta}) \big\rbrace
\end{equation}
is such that $\mathcal{U}\subseteq \mathcal{V}$. This yields the lemma.
\end{proof}

$\mathcal{T}_2$ is a projective limit topology (in the notation of \cite[Section 4.6]{Dembo1998}) in the following sense. For $m \geq 1$, the projection $p_{m}: \mathcal{P}(\mathbb{S}^1 \times \mathcal{P}(\mathbb{S}^1)) \to [0,1]^m$ is 
\[
p_{m}\cdot \nu = (\nu(\mathcal{O}^m_1),\ldots,\nu(\mathcal{O}^m_m)).
\]
Clearly $p_{m}: \mathcal{P}(\mathbb{S}^1 \times \mathcal{P}(\mathbb{S}^1)) \to [0,1]^m $ is continuous (when $\mathcal{P}(\mathbb{S}^1 \times \mathcal{P}(\mathbb{S}^1))$ is endowed with the $\mathcal{T}_2$ topology and $[0,1]^m$ with the Euclidean topology). Let $p_{qm} : [0,1]^m \to [0,q]^q$ (for $q \leq m$) be the induced projection i.e. for $1\leq j \leq q$,
\[
p_{qm}(x_1,\ldots,x_m)^j = \sum_{i=1}^m \chi\lbrace \mathcal{O}^m_i \subseteq \mathcal{O}^q_j \rbrace x_i.
\]
The above identity means that 
\begin{equation}
p_{qm}\cdot p_m(\nu) = p_q(\nu).
\end{equation}
It is clear that $p_{qm}$ is continuous.

To prove the theorem, we will use the Dawson-Gartner Theorem for the Large Deviations of projective limit systems \cite{Dawson1987},\cite[Theorem 4.6.1]{Dembo1998}. To satisfy the conditions of this theorem, we first prove that for any partition $P > 0$ and any set $A \in \mathcal{B}([0,1]^P)$ that is either open or closed,
\begin{align}
\lim_{n\to\infty}\big\lbrace (2n+1)^2 \rho_n \big\rbrace^{-1} \mathbb{P}\big(  (\breve{\mu}^n_*(\mathcal{O}^P_1) , \ldots,\breve{\mu}^n_*(\mathcal{O}^P_P) ) \in A \big) &= -\inf_{x \in A}I_{\mathcal{O}^P_1,\ldots,\mathcal{O}^P_P}(x) \text{ where } \\
I_{\mathcal{O}^P_1,\ldots,\mathcal{O}^P_P}(x)  &=\sum_{i=1}^P \mathbb{E}^{\nu}\big[\chi\lbrace \mathcal{O}^P_i \rbrace  \inf_{\mu \in \mathcal{Z}_i}I_\theta(\mu)\big] ,
 \end{align}
 and we have written $\mathcal{O}^P_i = O^P_i \times \mathcal{Z}_i$. Let 
\[
\mathcal{U}_n = \big\lbrace z_n \in  A \; : z_n^i = p_n^i / (2n+1) \text{ for }p_n^i \in \mathbb{Z}^+ \text{ and }|p_n^i| \leq (2n+1) \text{ and }\sum_{i=1}^P z_i = 1 \big\rbrace .
\]
Notice that $|\mathcal{U}_n|$ is polynomial in $n$, which means that 
\[
 \lim_{n\to\infty}\lbrace (2n+1)^2 \rho_n \rbrace^{-1}\log |\mathcal{U}_n| = 0.
\]
We thus find that,
 \begin{align*}
   \lim_{n\to\infty}\lbrace (2n+1)^2 \rho_n \rbrace^{-1}\log \mathbb{P}\big(  (\breve{\mu}^n_*(\mathcal{O}^P_1) ,& \ldots,\breve{\mu}^n_*(\mathcal{O}^P_P) ) \in A \big)  \\
    &=\lim_{n\to\infty}\lbrace (2n+1)^2 \rho_n \rbrace^{-1}\log \sum_{z_n \in \mathcal{U}_n} \mathbb{P}\big(  (\breve{\mu}^n_*(\mathcal{O}^P_1) , \ldots,\breve{\mu}^n_*(\mathcal{O}^P_P) ) =z_n \big)  \\
    &= \lim_{n\to\infty}\sup_{z_n \in \mathcal{U}_n}\lbrace (2n+1)^2 \rho_n \rbrace^{-1}\log \mathbb{P}\big(  (\breve{\mu}^n_*(\mathcal{O}^P_1) , \ldots,\breve{\mu}^n_*(\mathcal{O}^P_P) ) =z_n \big),
 \end{align*}
 as long as the last limit exists. Now fix $z_n \in \mathcal{U}_n$ to be such that the supremum on the right hand side is attained at $z_n$ (this is possible because $\mathcal{U}_n$ is of finite size).

Let $\lbrace \upsilon^n_j \rbrace_{j\in I_n} \subset \mathcal{P}( \mathbb{S}^1)$ be any set of measures such that (i) $\mathbb{P}( \breve{\mu}^n_j = \upsilon^n_j ) \neq 0 $, and (ii),
defining $\upsilon^n = \frac{1}{2n+1}\sum_{j\in I_n}\delta_{\theta^j_n , \upsilon^n_j} \in \mathcal{P}\big(\mathbb{S}^1\times\mathcal{P}(\mathbb{S}^1)\big)$, 
\[
 ( \upsilon^n(\mathcal{O}^P_1) , \ldots,\upsilon^n(\mathcal{O}^P_P) ) = z_n.
\]
Employing a multinomial expansion, we find that
\begin{equation}
 \mathbb{P}\big(  (\breve{\mu}^n_*(\mathcal{O}^P_1) , \ldots,\breve{\mu}^n_*(\mathcal{O}^P_P) ) = z_n \big) = \Upsilon_{n,z_n} \prod_{j \in I_n}\mathbb{P}\big(\breve{\mu}^n_j = \upsilon_j\big)
 \end{equation}
 where $\Upsilon_{n,z_n}$ contains combinatorial factors, and (thanks to Stirling's Approximation) is such that $\Upsilon_n \leq \exp( Cn)$ for a positive constant $C$.  
 We thus find that (recalling that $\rho_n (2n+1) \to \infty$ as $n\to\infty$), and making use of Lemma \ref{Lemma Exact Asymtotic LDP},
 \begin{align}
 \lim_{n\to\infty}\big\lbrace (2n+1)^2 \rho_n \big\rbrace^{-1}\log \mathbb{P}\big(  (\breve{\mu}^n_*(\mathcal{O}^P_1) , \ldots,\breve{\mu}^n_*(\mathcal{O}^P_P) ) =z_n \big) &= - \inf_{z\in A}\sum_{i=1}^P z^i \inf_{\mu \in \mathcal{Z}_i}I_{\tilde{\theta}_i}(\mu) \nonumber \\
 &= -\inf_{z\in A} I_{\mathcal{O}^P_1,\ldots,\mathcal{O}^P_P}(z).
 \end{align}
 Now, applying the Dawson-Gartner Theorem \cite[Theorem 4.6.1]{Dembo1998}, it must be that for any open $A \subset \mathcal{P}\big(\mathbb{S}^1 \times \mathcal{P}(\mathbb{S}^1) \big)$ and any closed $F \subset \mathcal{P}\big(\mathbb{S}^1 \times \mathcal{P}(\mathbb{S}^1) \big)$ (these sets are open / closed with respect to the $\mathcal{T}_2$ topology),
\begin{align}
\lsup{n}\big\lbrace (2n+1)^2 \rho_n \big\rbrace^{-1} \log \mathbb{P}\big(  \breve{\mu}^n_* \in F \big) &\leq -\inf_{\nu \in F} \breve{I}(\nu)\\
\linf{n}\big\lbrace (2n+1)^2 \rho_n \big\rbrace^{-1} \log \mathbb{P}\big(  \breve{\mu}^n_* \in A \big) &\geq -\inf_{\nu \in A} \breve{I}(\nu) \text{ where } \\
\breve{I}(\nu) &= \sup_{P>0}\big\lbrace \breve{I}_{\mathcal{O}^P_1,\ldots,\mathcal{O}^P_P}\big( (\nu(\mathcal{O}^P_1),\ldots,\nu(\mathcal{O}^P_P))\big) \big\rbrace \label{eq: breve I nu}
 \end{align}
 In Lemma \ref{Lemma Rate Function Equivalence} we demonstrate that $ \breve{I}(\nu) = \tilde{I}(\nu)$. We have thus established \eqref{eq:LDP 1} and \eqref{eq:LDP 2}.
 \end{proof}
 
 We now complete the proof of Theorem \ref{Theorem Sparse LDP}.
 \begin{proof}
We earlier assumed in \eqref{eq: connection probabilities piecewise constant} that the connection probabilities are piecewise constant. We now prove the more general case. Let $\mathcal{C}^{(m)}(\theta,\alpha)$ be piecewise constant in its first argument (satisfying \eqref{eq: connection probabilities piecewise constant}), continuous in its second argument, for some $m\geq 1$, and such that
\begin{equation}
\lim_{m\to\infty} \sup_{\theta,\alpha \in \mathbb{S}^1}\big| \mathcal{C}^{(m)}(\alpha,\theta) - \mathcal{C}(\alpha,\theta) \big| = 0.
\end{equation}
It is always possible to make the above approximation because $\mathcal{C}$ is continuous on the compact space $\mathbb{S}^1 \times \mathbb{S}^1$ (and therefore it must be uniformly continuous). Furthermore, since by assumption $\mathcal{C}(\cdot,\cdot) \geq C_{lb} > 0$, it must be that 
\begin{equation}
\lim_{m\to\infty} \sup_{\theta,\alpha \in \mathbb{S}^1}\bigg| \frac {\mathcal{C}(\alpha,\theta) }{\mathcal{C}^{(m)}(\alpha,\theta) }- 1\bigg| = 0.
\end{equation}
It remains to prove that for open $A \subset \mathcal{P}(\mathbb{S}^1\times\mathcal{P}(\mathbb{S}^1))$ and closed $F \subset \mathcal{P}(\mathbb{S}^1\times\mathcal{P}(\mathbb{S}^1))$ 
\begin{align}
\lsup{n}\big\lbrace (2n+1)^2 \rho_n \big\rbrace^{-1} \mathbb{P}\big(  \breve{\mu}^n_* \in F \big) &\leq -\inf_{\nu \in F} \tilde{I}(\nu)\label{eq:LDP 1 0}\\
\linf{n}\big\lbrace (2n+1)^2 \rho_n \big\rbrace^{-1} \mathbb{P}\big(  \breve{\mu}^n_* \in A \big) &\geq -\inf_{\nu \in A} \tilde{I}(\nu).\label{eq:LDP 2 0}
 \end{align}

Let $Q \in \mathcal{P}\big(\mathcal{E}^{(2n+1)^2}\big)$ be the law of the random connections $\lbrace w^{jk} \rbrace_{j,k\in I_n}$ with connection probability $\rho_n\mathcal{C}(\cdot,\cdot)$. Let $Q^{(m)} \in \mathcal{P}\big(\mathcal{E}^{(2n+1)^2}\big)$ be the law of the random connections $\lbrace w^{jk} \rbrace_{j,k\in I_n}$ with connection probability $\rho_n\mathcal{C}^{(m)}(\cdot,\cdot)$. We see that
\begin{equation}
\frac{dQ}{dQ^{(m)}}\big( \lbrace w^{jk} \rbrace \big) =\prod_{j,k\in I_n}\frac{\chi\lbrace w^{jk} = 1\rbrace \rho_n\mathcal{C}(\theta^j_n,\theta^k_n) + \chi\lbrace w^{jk}=0 \rbrace( 1- \rho_n\mathcal{C}(\theta^j_n,\theta^k_n)) }{\chi\lbrace w^{jk} = 1\rbrace \rho_n\mathcal{C}^{(m)}(\theta^j_n,\theta^k_n) + \chi\lbrace w^{jk}=0 \rbrace( 1- \rho_n\mathcal{C}^{(m)}(\theta^j_n,\theta^k_n)) }.
\end{equation}
Define, for $a\geq 0$,
\[
\mathcal{V}^n_a = \big\lbrace \lbrace w^{jk} \rbrace \in \mathcal{E}^{(2n+1)^2} : \sum_{j,k\in I_n} w^{jk} \leq \rho_n(2n+1)^2 a\big\rbrace ,
\]
and
\begin{equation}
\alpha_m = \max\bigg\lbrace \sup_{\theta,\alpha \in \mathbb{S}^1}\bigg| \frac {\mathcal{C}(\alpha,\theta) }{\mathcal{C}^{(m)}(\alpha,\theta) }- 1\bigg| , \sup_{\theta,\alpha \in \mathbb{S}^1}\big| \mathcal{C}^{(m)}(\alpha,\theta) - \mathcal{C}(\alpha,\theta) \big|   \bigg\rbrace .
\end{equation}
Using the inequality
\[
\frac{1-z}{1-x} \leq 1 + x-z + 2x^2,
\]
as long as $x,z \ll 1$ are sufficiently small, it must hold that there is a $K > 0$ such that
\begin{equation}
 \frac{ 1- \rho_n\mathcal{C}(\theta^j_n,\theta^k_n)}{ 1- \rho_n\mathcal{C}^{(m)}(\theta^j_n,\theta^k_n)} \leq 1 + \rho_n \alpha_m + K\rho_n^2.
 \end{equation}
We thus see that for $\lbrace w^{jk} \rbrace \in \mathcal{V}^n_a$, it must hold that
\begin{align}
\frac{dQ^n}{dQ^n_{(m)}}\big( \lbrace w^{jk} \rbrace \big) \leq &(1+\alpha_m)^{\rho_n(2n+1)^2 a}\big(1 + \rho_n \alpha_m + O(\rho_n^2)\big)^{(2n+1)^2(1-\rho_n a)}\nonumber \\
\leq & \exp\big( 2\alpha_m  \rho_n(2n+1)^2 a + K\rho_n^2 (2n+1)^2\big),\label{eq: dQ dQ m}
\end{align}
using the bound $1+y \leq \exp(y)$. Now for any set $U \in \mathcal{B}\big( \mathcal{P}(\mathbb{S}^1 \times \mathcal{P}(\mathbb{S}^1))\big)$,
\begin{multline}
 \lbrace (2n+1)^2 \rho_n \rbrace^{-1}\log Q^n\big(  \breve{\mu}^n_* \in U \big) \leq \nonumber \\
 \lbrace (2n+1)^2 \rho_n \rbrace^{-1}\log \big\lbrace Q^n\big(  \breve{\mu}^n_* \in U \text{ and }\lbrace w^{jk} \rbrace \in \mathcal{V}^n_a \big) +  Q^n\big( \lbrace w^{jk} \rbrace \notin \mathcal{V}^n_a \big) \big\rbrace
 \leq \lbrace (2n+1)^2 \rho_n \rbrace^{-1}\log 2 \nonumber \\ + \max\bigg\lbrace \lbrace (2n+1)^2 \rho_n \rbrace^{-1}\log  Q^n\big(  \breve{\mu}^n_* \in U \text{ and }\lbrace w^{jk} \rbrace \in \mathcal{V}^n_a \big) 
 , \lbrace (2n+1)^2 \rho_n \rbrace^{-1}\log Q^n\big( \lbrace w^{jk} \rbrace \notin \mathcal{V}^n_a \big) \bigg\rbrace .
 \end{multline}
Let $M > \sup_{\theta,\alpha\in\mathbb{S}^1}\mathcal{C}(\theta,\alpha)$. Thanks to Lemma \ref{Lemma Exponential Tightness}, we can find $a > 0$ such that
 \begin{equation}
  \lbrace (2n+1)^2 \rho_n \rbrace^{-1}\log Q^n\big( \lbrace w^{jk} \rbrace \notin \mathcal{V}^n_a \big) \leq -M .
 \end{equation}
 Starting with the upper bound (and taking $U$ to be closed) we thus find that
 \begin{align}
\lsup{n} \lbrace (2n+1)^2 \rho_n \rbrace^{-1}\log Q^n\big(  \breve{\mu}^n_* \in U \big) \leq - \min\big\lbrace M, \lsup{n} \lbrace (2n+1)^2 \rho_n \rbrace^{-1}\log  Q^n\big(  \breve{\mu}^n_* \in U \text{ and }\lbrace w^{jk} \rbrace \in \mathcal{V}^n_a \big) \big\rbrace  .
 \end{align}
 Employing the bound in \eqref{eq: dQ dQ m},
 \begin{multline*}
 \lsup{n} \lbrace (2n+1)^2 \rho_n \rbrace^{-1}\log  Q^n\big(  \breve{\mu}^n_* \in U \text{ and }\lbrace w^{jk} \rbrace \in \mathcal{V}^n_a \big)\\ \leq \lsup{n} \lbrace (2n+1)^2 \rho_n \rbrace^{-1}\log  Q^n_{(m)}\big(  \breve{\mu}^n_* \in U \text{ and }\lbrace w^{jk} \rbrace \in \mathcal{V}^n_a \big) + 2\alpha_m a.
 \end{multline*}
Furthermore
 \begin{align}
  \lsup{n} \lbrace (2n+1)^2 \rho_n \rbrace^{-1}\log  Q^n_{(m)}\big(  \breve{\mu}^n_* \in U \text{ and }\lbrace w^{jk} \rbrace \in \mathcal{V}^n_a \big) &\leq  \lsup{n} \lbrace (2n+1)^2 \rho_n \rbrace^{-1}\log  Q^n_{(m)}\big(  \breve{\mu}^n_* \in U \big)\nonumber \\
  &\leq -\inf_{\nu \in U}\tilde{I}^{(m)}(\nu).
 \end{align}
 where
 \begin{align}
\tilde{I}^{(m)}(\nu) &= \mathbb{E}^{\nu(\theta,\mu)}[ I^{(m)}_\theta(\mu) ] \text{ and }\\
I^{(m)}_\alpha(\mu) &= \frac{1}{2\pi}\int_{\mathbb{S}^1}\mathcal{C}^{(m)}(\alpha,\theta) d\theta - \exp\bigg(-\frac{1}{2\pi}\int_{\mathbb{S}^1} \zeta(\theta) \log\bigg(\frac{\zeta(\theta)}{\mathcal{C}^{(m)}(\alpha,\theta)} \bigg)d\theta  \bigg).
\end{align}
Since $\mathcal{C}^{(m)}$ converges uniformly to $\mathcal{C}$, one easily checks that
\[
\lim_{m\to\infty}\sup_{\alpha \in \mathbb{S}^1}\sup_{\mu  \in \mathcal{P}(\mathbb{S}^1) }\big\lbrace \big| I^{(m)}_\alpha(\mu) - I_\alpha(\mu)\big| \big\rbrace = 0.
\]
We now take $m\to\infty$. Since 
\[
\inf_{\nu \in U}\tilde{I}(\nu) \leq \sup_{\theta,\alpha\in\mathbb{S}^1}\mathcal{C}(\theta,\alpha) < M,
\]
it must be that
\begin{equation}
\lim_{m\to\infty} \min\big\lbrace M, \lsup{n} \lbrace (2n+1)^2 \rho_n \rbrace^{-1}\log  Q^n\big(  \breve{\mu}^n_* \in U \text{ and }\lbrace w^{jk} \rbrace \in \mathcal{V}^n_a \big) \big\rbrace  
\leq  -\inf_{\nu \in U}\tilde{I}(\nu)
\end{equation}
 and therefore \eqref{eq:LDP 1 0} must hold.
 
 The lower bound is prove analogously.

\end{proof}

\begin{lemma}\label{Lemma Rate Function Equivalence}
 \begin{equation}
 \breve{I}(\nu) = \tilde{I}(\nu),
 \end{equation}
 where $\tilde{I}$ is defined in \eqref{eq: tilde I rate function} and $\breve{I}$ is defined in \eqref{eq: breve I nu}.
\end{lemma}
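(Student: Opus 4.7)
The plan is to prove the two inequalities $\breve{I}(\nu)\leq\tilde{I}(\nu)$ and $\breve{I}(\nu)\geq\tilde{I}(\nu)$ by exploiting the supremum-over-partitions structure of $\breve{I}$ in \eqref{eq: breve I nu}, together with a lower semi-continuity property of the integrand $I_\theta(\mu)$. Throughout I write $\breve{I}_P(\nu)$ as shorthand for $\breve{I}_{\mathcal{O}^P_1,\ldots,\mathcal{O}^P_P}\bigl((\nu(\mathcal{O}^P_1),\ldots,\nu(\mathcal{O}^P_P))\bigr)$.

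For the upper bound $\breve{I}(\nu)\leq\tilde{I}(\nu)$, fix any finite partition with cells of product form $\mathcal{O}_i^P=O_i^P\times\mathcal{Z}_i$ and representatives $\tilde\theta_i\in O_i^P$. From the derivation in the preceding lemma,
\[
\breve{I}_P(\nu)=\sum_{i=1}^P\nu(\mathcal{O}_i^P)\inf_{\mu\in\mathcal{Z}_i}I_{\tilde\theta_i}(\mu).
\]
Since $\inf_{\mu\in\mathcal{Z}_i}I_{\tilde\theta_i}(\mu)\leq I_{\tilde\theta_i}(\mu')$ for every $\mu'\in\mathcal{Z}_i$, averaging against $\nu$ restricted to $\mathcal{O}_i^P$ yields
\[
\breve{I}_P(\nu)\leq\sum_{i=1}^P\int_{\mathcal{O}_i^P}I_{\tilde\theta_i}(\mu')\,d\nu(\theta,\mu').
\]
Inspection of the formulas defining $I_\theta(\mu)$ shows that $\theta\mapsto I_\theta(\mu)$ inherits a modulus of continuity uniform in $\mu$ from the uniform continuity of $\mathcal{C}$ on $\mathbb{S}^1\times\mathbb{S}^1$ (the lower bound $\mathcal{C}\geq C_{lb}>0$ keeps the logarithms controlled). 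Given $\varepsilon>0$, refining the $\theta$-component of the partition so that $|I_{\tilde\theta_i}(\mu)-I_\theta(\mu)|<\varepsilon$ uniformly in $\mu$ and for $\theta\in O_i^P$ produces $\breve{I}_P(\nu)\leq\tilde{I}(\nu)+\varepsilon$. Since $\breve{I}_P$ is monotone non-decreasing under refinement of the partition, its supremum over all $P$ coincides with the limit along such refinements, giving $\breve{I}(\nu)\leq\tilde{I}(\nu)$.

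For the lower bound $\breve{I}(\nu)\geq\tilde{I}(\nu)$, the essential analytic step is to verify that the map $(\theta,\mu)\mapsto I_\theta(\mu)$ is lower semi-continuous on $\mathbb{S}^1\times\mathcal{P}(\mathbb{S}^1)$ (with the weak topology on the second factor). The check proceeds case-by-case: at a non-density $\mu_0$, any weakly convergent sequence $\mu_n\to\mu_0$ of density measures with densities $\zeta_n$ must satisfy $\frac{1}{2\pi}\int_{\mathbb{S}^1}\zeta_n(\eta)\log(\zeta_n(\eta)/\mathcal{C}(\theta,\eta))\,d\eta\to\infty$ (by lower semi-continuity of relative entropy, since $\mu_0$ is not absolutely continuous and $\mathcal{C}\geq C_{lb}>0$); hence the exponential term vanishes and $\liminf_n I_\theta(\mu_n)=\frac{1}{2\pi}\int\mathcal{C}(\theta,\eta)d\eta=I_\theta(\mu_0)$. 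At a density $\mu_0$, lower semi-continuity of relative entropy in its first argument combined with monotonicity of $x\mapsto-e^{-x}$ yields $\liminf_n I_\theta(\mu_n)\geq I_\theta(\mu_0)$. Granted LSC on the compact product space, for each $\varepsilon>0$ one covers $\mathbb{S}^1\times\mathcal{P}(\mathbb{S}^1)$ by finitely many open product sets $O_i\times\mathcal{Z}_i$ satisfying $\inf_{(\theta,\mu)\in O_i\times\mathcal{Z}_i}I_\theta(\mu)\geq I_{\theta_i^*}(\mu_i^*)-\varepsilon$ for suitable representatives, then disjointifies them into a partition $P$ of the form admissible for the $\mathcal{T}_2$ construction. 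A short calculation gives
\[
\breve{I}_P(\nu)\geq\sum_i\nu(\mathcal{O}_i^P)\bigl(I_{\theta_i^*}(\mu_i^*)-\varepsilon\bigr)\geq\tilde{I}(\nu)-2\varepsilon,
\]
where the second inequality uses that the sum is a Riemann-type approximation of $\int I_\theta(\mu)\,d\nu$ on cells over which $I$ oscillates by at most $\varepsilon$. Sending $\varepsilon\to 0$ and taking the supremum over $P$ yields the desired inequality.

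The hard part will be the LSC verification at non-density limits: sequences of density measures with progressively singular densities weakly approximate any non-density measure, and one must show that the penalty $\exp\bigl(-\frac{1}{2\pi}\int\zeta_n\log(\zeta_n/\mathcal{C})\bigr)$ indeed vanishes at the correct rate to match the non-density value $\frac{1}{2\pi}\int\mathcal{C}(\theta,\eta)d\eta$. This reduces to lower semi-continuity of relative entropy in its first argument (a standard fact in the weak topology) combined with the strictly positive lower bound on $\mathcal{C}$, which controls the logarithm and precludes pathological cancellations between the $\zeta\log\zeta$ and $\zeta\log\mathcal{C}$ contributions.
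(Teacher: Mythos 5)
Your upper bound $\breve{I}(\nu)\leq\tilde{I}(\nu)$ and your verification that $(\theta,\mu)\mapsto I_\theta(\mu)$ is lower semicontinuous are both sound and consistent with what the paper uses. The gap is in the lower bound, at the ``Riemann-type approximation'' step. You need cells ``over which $I$ oscillates by at most $\varepsilon$,'' i.e.\ $\sup_{\mathcal{O}_i}I-\inf_{\mathcal{O}_i}I\leq\varepsilon$, so that the cell infima appearing in $\breve{I}_P$ undercount $\int I_\theta(\mu)\,d\nu$ by at most $\varepsilon$. But no finite partition of $\mathbb{S}^1\times\mathcal{P}(\mathbb{S}^1)$ into sets with nonempty interior can achieve this: measures without a density are weakly dense in $\mathcal{P}(\mathbb{S}^1)$, and at every such measure $I_\theta$ attains its maximal value $\frac{1}{2\pi}\int_{\mathbb{S}^1}\mathcal{C}(\theta,\eta)\,d\eta$. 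Hence $\sup_{\mathcal{Z}_i}I_\theta$ equals that maximum on every nonempty open $\mathcal{Z}_i$, while $\inf_{\mathcal{Z}_i}I_\theta$ is in general strictly smaller by a fixed amount, so the oscillation over a cell is bounded below independently of the fineness of the partition. Lower semicontinuity only gives you, for each fixed point, a neighborhood on which $I$ does not drop by more than $\varepsilon$; the size of that neighborhood depends on the point, and a single finite partition cannot serve all points at once. This is precisely why $I_\theta$ is a rate function and not a continuous function, and the step $\sum_i\nu(\mathcal{O}^P_i)\inf_{\mathcal{Z}_i}I_{\theta_i^*}\geq\tilde{I}(\nu)-2\varepsilon$ fails as written.

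The repair is the one the paper uses: work with the entire nested, point-separating sequence of partitions rather than one fine partition. For each fixed $(\theta,\mu)$ the quantity $\inf_{\gamma\in\mathcal{Z}^P_{i}}I_\theta(\gamma)$, taken over the cell containing $\mu$, is non-decreasing in $P$ (the cells shrink), and its limit is $\geq I_\theta(\mu)$ because for $P$ large enough the cell enters the LSC neighborhood of that particular point; it is $\leq I_\theta(\mu)$ trivially. The monotone convergence theorem then exchanges this pointwise limit with the $\nu$-integral, giving $\lim_P\breve{I}_P(\nu)=\mathbb{E}^\nu[I_\theta(\mu)]$, and monotonicity of $\breve{I}_P$ in $P$ identifies this limit with the supremum $\breve{I}(\nu)$. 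In short: your LSC input is correct, but it must be fed through a pointwise-limit-plus-monotone-convergence argument, not through a uniform finite-partition estimate.
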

\begin{proof}
Let $P > 0$ be such that
\begin{equation}
\breve{I}_{\mathcal{O}^P_1,\ldots,\mathcal{O}^P_P}\big( (\nu(\mathcal{O}^P_1),\ldots,\nu(\mathcal{O}^P_P))\big) \geq \sup_{Q\geq 1}\big\lbrace \breve{I}_{\mathcal{O}^Q_1,\ldots,\mathcal{O}^Q_Q}\big( (\nu(\mathcal{O}^Q_1),\ldots,\nu(\mathcal{O}^Q_Q))\big) \big\rbrace - \epsilon,
\end{equation}
for some $\epsilon\ll 1$. For any sequence $\lbrace r_P \rbrace_{P\geq 1}$ such that  
\begin{equation}
\mathcal{O}_{r_{P+1}}^{P+1} \subseteq \mathcal{O}_{r_P}^P,
\end{equation}
it must be that there exists $(\theta,\eta) \in \mathbb{S}^1 \times \mathcal{P}(\mathbb{S}^1)$ such that $\bigcap_{P\geq 1} \mathcal{O}_{r_P}^P = (\theta,\eta)$.  For any $(\theta,\mu) \in \mathbb{S}^1 \times \mathcal{P}(\mathbb{S}^1)$, since if $(\theta,\mu) \in \mathcal{O}^{P+1}_i$ and $(\theta,\mu) \in \mathcal{O}^{P}_j$, it must be that $\mathcal{Z}^{P+1}_i \subseteq \mathcal{Z}^P_j$, and therefore
\begin{equation}
\sum_{i=1}^P \chi\lbrace (\theta,\mu) \in \mathcal{O}^P_i \rbrace \inf_{\gamma \in \mathcal{Z}^P_i}I_\theta(\gamma) \leq \sum_{i=1}^{P+1} \chi\lbrace (\theta,\mu) \in \mathcal{O}^{P+1}_i \rbrace\inf_{\gamma \in \mathcal{Z}^{P+1}_i}I_\theta(\gamma).
\end{equation}
It thus follows from the monotone convergence theorem that
\begin{align}
\lim_{P\to\infty} \breve{I}_{\mathcal{O}^P_1,\ldots,\mathcal{O}^P_P}\big( (\nu(\mathcal{O}^P_1),\ldots, \nu(\mathcal{O}_P))\big) = \mathbb{E}^{\nu}\big[\lim_{P\to\infty}\sum_{i=1}^P\big( \chi\lbrace (\theta,\mu) \in \mathcal{O}^P_i \rbrace \inf_{\gamma \in \mathcal{Z}^P_i}I_\theta(\gamma) \big) \big],
\end{align}
and the left-hand-side is non-decreasing in $P$. It is immediate from the definition that
\begin{equation}
 \mathbb{E}^{\nu}\big[\lim_{P\to\infty}\sum_{i=1}^P \chi\lbrace (\theta,\mu) \in \mathcal{O}^P_i \rbrace \inf_{\gamma \in \mathcal{Z}^P_i}I_\theta(\gamma)  \big] \leq \mathbb{E}^{\nu}[I_\theta(\mu)].
\end{equation}
Since $I_\theta(\mu)$ is continuous in $\theta$, and lower-semi-continuous in $\mu$, it must be that 
\begin{equation}
 \mathbb{E}^{\nu}\big[\lim_{P\to\infty}\sum_{i=1}^P \big(\chi\lbrace (\theta,\mu) \in \mathcal{O}^P_i \rbrace \inf_{\gamma \in \mathcal{Z}^P_i}I_\theta(\gamma) \big) \big] \geq \mathbb{E}^{\nu}[I_\theta(\mu)].
\end{equation}
We thus see that
\begin{equation}
\lim_{P\to\infty} \breve{I}_{\mathcal{O}^P_1,\ldots,\mathcal{O}^P_P}\big( (\nu(\mathcal{O}^P_1),\ldots, \nu(\mathcal{O}^P_P))\big) = \mathbb{E}^{\nu}[I_\theta(\mu)].
\end{equation}
We have thus proved that  $\mathbb{E}^{\nu}[I_\theta(\mu)] \geq \breve{I}(\nu) - \epsilon$. It is clear from the definition of $\breve{I}$ that 
\[
\mathbb{E}^{\nu}[I_\theta(\mu)] \leq \breve{I}(\nu).
\]
Taking $\epsilon \to 0$, it must be that 
\begin{equation}
 \mathbb{E}^{\nu}[I_\theta(\mu)] = \breve{I}(\nu).
\end{equation}
\end{proof}
\subsection{LDP for the conditional empirical measure}
Throughout this section, suppose that there exists a partition of $\mathbb{S}^1$ into intervals, i.e. $\bigcup_{i=1}^Q R_i$ , and such that for each $\alpha \in \mathbb{S}^1$,
\begin{equation} \label{eq: connection probabilities piecewise constant}
\mathcal{C}(\theta_1,\alpha) = \mathcal{C}(\theta_2,\alpha)
\end{equation}
if $\theta_1,\theta_2 \in R_i$ for some $R_i$. We also assume uniform lower and upper bounds, i.e.
\begin{align}
\inf_{\theta,\alpha\in\mathbb{S}^1}\mathcal{C}(\alpha,\theta) >& 0 \\
\sup_{\theta,\alpha\in\mathbb{S}^1}\mathcal{C}(\alpha,\theta) <& \infty.
\end{align}
Let $ \mathcal{M}^+( \mathbb{S}^1 \times \mathcal{E} )$ be the set of all positive Borel measures on $\mathbb{S}^1 \times \mathcal{E}$. Define
\[
\tilde{\mu}^n_{j} = \frac{1}{\rho_n(2n+1)}\sum_{k \in I_n}\delta_{(\theta^k_n , w^{jk})} \in \mathcal{M}^+( \mathbb{S}^1 \times \mathcal{E} ),
\]
and observe that 
\begin{align} \label{eq: hat mu n j change}
\breve{\mu}^n_j &= \pi \cdot \tilde{\mu}^n_j \text{ where }
\pi :  \mathcal{M}^+\big( \mathbb{S}^1 \times \mathcal{E} \big) \to \mathcal{P}(\mathbb{S}^1) \text{ is such that } \\
(\pi \cdot \nu)(B) &= \frac{\nu(\theta\in B \text{ and }w =1)}{\nu(w=1)} \text{ for }B \in \mathcal{B}(\mathbb{S}^1). \label{eq: pi map}
\end{align}
Since $w^{jj} = 1$ always, $\pi \cdot \tilde{\mu}^n_j$ is always well-defined. We equip $ \mathcal{M}^+( \mathbb{S}^1 \times \mathcal{E} )$ with the topology $\tilde{\mathcal{T}}$ generated by open sets of the form
\begin{equation}\label{eq: Topology M S1 E}
\big\lbrace \mu \in \mathcal{M}^+\big( \mathbb{S}^1 \times \mathcal{E} \big) : \big|  \mathbb{E}^{\mu}[ g(\theta) w] -x \big| < \delta \big\rbrace,
\end{equation}
for $x,\delta \in \mathbb{R}$ and continuous $g: \mathbb{S}^1 \to \mathbb{R}$. Measures with identical expectations for all such functions are identified. Observe that $\pi$ is uniformly continuous over sets of the form $\lbrace \nu : \nu(w=1) \geq \epsilon > 0 \rbrace$. Let $\tilde{\Pi}^n_j \in \mathcal{P}\big( \mathcal{M}^+(\mathbb{S}^1 \times \mathcal{E}) \big)$ be the probability law of $\tilde{\mu}^n_j$. 
\begin{lemma}
Let $\lbrace j_n \rbrace_{n\geq 1}$ be any sequence such that there exists $p \geq 1$ such that $\theta^{j_n}_n \in R_p$ for all $n\geq 1$, and $\theta^{j_n}_n \to \alpha \in R_p$. For any open $O \in \mathcal{B}(\mathcal{P}(\mathbb{S}^1))$ and closed $F \in \mathcal{B}(\mathcal{P}(\mathbb{S}^1))$ (according to the topology generated by open sets of the form \eqref{eq: Topology M S1 E}),
\begin{align}
\linf{n}(\rho_n (2n+1))^{-1} \log \mathbb{P}\big( \breve{\mu}^n_{j_n} \in O \big) &\geq - \inf_{\mu \in O}I_{\alpha}(\mu) \label{eq: LDP lower bound intermediate} \\
\lsup{n}(\rho_n (2n+1))^{-1} \log \mathbb{P}\big( \breve{\mu}^n_{j_n} \in F \big) &\leq - \inf_{\mu \in F}I_{\alpha}(\mu).
\end{align}
\end{lemma}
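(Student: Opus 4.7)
The plan is to first establish an LDP for $\tilde{\mu}^n_{j_n}$ at speed $\alpha_n = \rho_n(2n+1)$ in the topology \eqref{eq: Topology M S1 E}, and then contract through $\pi$. Because the topology \eqref{eq: Topology M S1 E} only probes $\tilde{\mu}^n_{j_n}$ against test functions of the form $g(\theta)w$, the measure $\tilde{\mu}^n_{j_n}$ is identified (modulo that topology) with its restriction to $\{w=1\}$, namely the positive measure
\[
\hat{\nu}^n := \frac{1}{\rho_n(2n+1)}\sum_{k\in I_n} w^{j_n k}\,\delta_{\theta^k_n} \in \mathcal{M}^+(\mathbb{S}^1),
\]
equipped with the topology of integration against continuous $g:\mathbb{S}^1 \to \mathbb{R}$. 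By the hypothesis $\theta^{j_n}_n \in R_p$ and the piecewise constancy of $\mathcal{C}$ in its first argument, the $w^{j_n k}$ are independent Bernoullis with parameters $p^n_k := \rho_n \mathcal{C}(\alpha,\theta^k_n)$.

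Next, by independence and the sparsity expansion $\log(1+p^n_k(e^g-1)) = p^n_k(e^g-1) + O(\rho_n^2)$, a Gärtner--Ellis / Baldi-type argument gives the limiting log-moment generating function
\[
\Lambda(g) := \lim_{n\to\infty}\alpha_n^{-1}\log \mathbb{E}\bigl[\exp(\alpha_n\langle g,\hat{\nu}^n\rangle)\bigr] = \frac{1}{2\pi}\int_{\mathbb{S}^1} \mathcal{C}(\alpha,\eta)\bigl(e^{g(\eta)}-1\bigr)\,d\eta,
\]
using the equispacing of $\theta^k_n$. Pointwise Legendre duality in $g$ identifies the associated good rate function as the Poisson-type functional
\[
J_\alpha(\nu) = \frac{1}{2\pi}\int_{\mathbb{S}^1}\bigl[\zeta\log(\zeta/\mathcal{C}(\alpha,\cdot)) - \zeta + \mathcal{C}(\alpha,\cdot)\bigr]\,d\eta
\]
on absolutely continuous $\nu = (\zeta/2\pi)\,d\eta$, and $+\infty$ whenever $\nu$ has a non-trivial singular part (exhibit continuous bump functions concentrated on the singular support to make $\langle g,\nu\rangle$ blow up while $\Lambda(g)$ stays bounded). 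Exponential tightness at speed $\alpha_n$ can be obtained via the same Chernoff estimate as in Lemma \ref{Lemma Exponential Tightness}.

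Contraction through the normalization map $\pi:\nu \mapsto \nu/\nu(\mathbb{S}^1)$, which is continuous on $\{\nu(\mathbb{S}^1)\geq\varepsilon\}$ for any $\varepsilon>0$, gives $I_\alpha(\mu) = \inf_{M>0}J_\alpha(M\mu)$ away from the singular point $\nu=0$. For absolutely continuous $\mu = (\zeta/2\pi)\,d\eta$, this is a one-variable minimization whose optimiser is $M^* = \exp\bigl(-\tfrac{1}{2\pi}\int \zeta\log(\zeta/\mathcal{C}(\alpha,\cdot))\,d\eta\bigr)$; substituting reproduces exactly the claimed formula for $I_\alpha$.

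The hard part is the singular case of $\mu$: here every $M\mu$ with $M>0$ is singular, so $J_\alpha(M\mu)=+\infty$ and naive contraction would give $+\infty$ rather than the claimed finite value $\tfrac{1}{2\pi}\int\mathcal{C}(\alpha,\cdot)\,d\eta$. This reflects a genuine discontinuity of $\pi$ at the zero measure that must be handled by hand. Concretely, the lower bound \eqref{eq: LDP lower bound intermediate} needs an explicit construction: any singular probability $\mu$ can be approximated weakly by discrete measures supported on $o(\alpha_n)$ points, and one shows that with probability at least $\exp(-\alpha_n \tfrac{1}{2\pi}\int\mathcal{C}(\alpha,\cdot)\,d\eta + o(\alpha_n))$ no edges (other than a vanishing fraction placed at prescribed locations) are present. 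The matching upper bound follows from the fact that the event $\{\breve{\mu}^n_{j_n}\approx\mu\}$ with singular $\mu$ is contained, up to exponentially negligible corrections, in the event $\{\hat{\nu}^n(\mathbb{S}^1)\to 0\}$, whose probability is precisely $\exp(-\alpha_n\tfrac{1}{2\pi}\int\mathcal{C}(\alpha,\cdot)\,d\eta + o(\alpha_n))$. This separate handling of the $\nu=0$ preimage is the key technical step.
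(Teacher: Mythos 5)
Your proposal follows essentially the same route as the paper: a G\"artner--Ellis LDP for the unnormalized edge-empirical measure at speed $\rho_n(2n+1)$ with the Poisson-type rate functional (the paper's Lemma \ref{Lemma LDP big space}), contraction through the normalization map $\pi$ where it is continuous, a one-parameter minimization $\inf_{a>0}\tilde I_\alpha(a\mu)$ to recover $I_\alpha$ (Lemma \ref{Lemma Infimum Rate Function}), and a separate treatment of the small-mass/singular preimage giving the constant $\frac{1}{2\pi}\int\mathcal{C}(\alpha,\cdot)$ (Lemma \ref{Lemma U m limit}). The approach and the key technical step you isolate (the discontinuity of $\pi$ near the zero measure) coincide with the paper's.
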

\begin{proof}
Fix $\epsilon > 0$ and define $\mathcal{U}_\epsilon = \big\lbrace \mu \in \mathcal{M}^+(\mathbb{S}^1\times\mathcal{E}) \; : \; \mu(w=1) \leq \epsilon \big\rbrace $. By a union-of-events bound
\begin{multline*}
\linf{n}(\rho_n (2n+1))^{-1} \log \mathbb{P}\big( \breve{\mu}^n_{j_n} \in O \big) =\max\big\lbrace  \linf{n}(\rho_n (2n+1))^{-1} \log \mathbb{P}\big( \breve{\mu}^n_{j_n} \in O \text{ and }\tilde{\mu}^n_{j_n} \notin \mathcal{U}_\epsilon \big) , \\
 \linf{n}(\rho_n (2n+1))^{-1} \log \mathbb{P}\big( \tilde{\mu}^n_{j_n} \in \mathcal{U}_\epsilon \big) \big\rbrace .
\end{multline*}
Now
\begin{align*}
 \linf{n}(\rho_n (2n+1))^{-1} \log \mathbb{P}\big( \breve{\mu}^n_{j_n} \in O \text{ and }\tilde{\mu}^n_{j_n} \notin \mathcal{U}_\epsilon \big)
 &= \linf{n}(\rho_n (2n+1))^{-1} \log \mathbb{P}\big( \tilde{\mu}^n_{j_n} \in \pi^{-1}(O) \text{ and }\tilde{\mu}^n_{j_n} \notin \mathcal{U}_\epsilon \big) \\
 &\geq - \inf_{\mu \in (\pi^{-1}O) \cap \mathcal{U}^c_\epsilon}\tilde{I}_\alpha(\mu),
\end{align*}
using the Large Deviation Principle of Lemma \ref{Lemma LDP big space}, since $\pi^{-1}(O) \cap \mathcal{U}^c_\epsilon$ is open. 

Suppose first that (noting that $\pi^{-1}(O)$ is always non-empty)
\begin{equation}\label{eq: infimum infinity assmpt 1}
\inf_{\nu \in \pi^{-1}(O)} \tilde{I}_\alpha(\nu) = \infty.
\end{equation}
In this case, if $\nu \in \mathcal{M}^+(\mathbb{S}^1 \times \mathcal{E})$ is such that $\pi \cdot \nu =\mu \in O$, then it must be that either $\nu$ does not have a density, or it has a density $\zeta(\theta)$ and
\[
\int_{\mathbb{S}^1}\zeta(\theta)\log(\zeta(\theta))d\theta = \infty.
\] 
Whichever of these cases holds, it must be that %$I_\alpha(\mu) = \infty$, and therefore
\begin{equation}
I_{\alpha}(\mu) = \frac{1}{2\pi}\int_{\mathbb{S}^1}\mathcal{C}(\alpha,\theta)d\theta.\label{eq: I alpha mu temproary}
\end{equation}
Furthermore it follows from Lemma \ref{Lemma U m limit} that
\begin{align*}
\lim_{\epsilon \to 0} \linf{n}(\rho_n (2n+1))^{-1} \log \mathbb{P}\big( \tilde{\mu}^n_{j_n} \in \mathcal{U}_\epsilon \big) &= -\frac{1}{2\pi}\int_{\mathbb{S}^1}\mathcal{C}(\alpha,\theta)d\theta \\
 &=-\inf_{\mu \in O}I_{\alpha}(\mu) ,
\end{align*}
using \eqref{eq: I alpha mu temproary}. We thus obtain \eqref{eq: LDP lower bound intermediate} in the case that \eqref{eq: infimum infinity assmpt 1} holds, as required.

We now suppose that $\inf_{\nu \in \pi^{-1}(O)} \tilde{I}_\alpha(\nu) < \infty$. Then we first claim that
\begin{equation}\label{eq: tilde I nu minimum}
\inf_{\nu \in \pi^{-1}(O) \cap \mathcal{U}^c_\epsilon}\tilde{I}_\alpha(\nu) \to \inf_{\nu \in \pi^{-1}(O)} \tilde{I}_\alpha(\nu).
\end{equation}
To see this, let $\tilde{\nu}\in \pi^{-1}(O) $ be such that, for some $\delta > 0$,
\[
\tilde{I}_{\alpha}(\tilde{\nu}) \leq  \inf_{\nu \in \pi^{-1}(O)} \tilde{I}_\alpha(\nu) + \delta.
\]
The openness of $\pi^{-1}(O)$ implies that there must exist $\epsilon > 0$ such that $\nu \in  \pi^{-1}(O) \cap \mathcal{U}^c_\epsilon$. Since $\delta$ is arbitrary, we have established \eqref{eq: tilde I nu minimum}.

By Lemma \ref{Lemma Infimum Rate Function}, 
\[
 \inf_{\nu \in \pi^{-1}(O)} \tilde{I}_\alpha(\nu) = \inf_{\mu \in O}I_{\alpha}(\mu).
\]
Now $I_\alpha(\mu) \leq \frac{1}{2\pi}\int_{\mathbb{S}^1} \mathcal{C}(\alpha,\theta)d\theta$, and we thus find from \eqref{eq: epsilon to zero U} that
\begin{multline*}
\lim_{\epsilon \to 0}\max\big\lbrace  \linf{n}(\rho_n (2n+1))^{-1} \log \mathbb{P}\big( \breve{\mu}^n_{j_n} \in O \text{ and }\tilde{\mu}^n_{j_n} \notin \mathcal{U}_\epsilon \big) , \\
 \linf{n}(\rho_n (2n+1))^{-1} \log \mathbb{P}\big( \tilde{\mu}^n_{j_n} \in \mathcal{U}_\epsilon \big) \big\rbrace \\  =  \lim_{\epsilon \to 0}\linf{n}(\rho_n (2n+1))^{-1} \log \mathbb{P}\big( \breve{\mu}^n_{j_n} \in O \text{ and }\tilde{\mu}^n_{j_n} \notin \mathcal{U}_\epsilon \big) 
\geq -  \inf_{\mu \in O}I_{\alpha}(\mu).
\end{multline*}
We now turn to the upper bound. Fix $\epsilon > 0$ and define $\mathcal{V}_\epsilon = \big\lbrace \mu \in \mathcal{M}^+(\mathbb{S}^1\times\mathcal{E}) \; : \; \mu(w=1)\geq \epsilon \big\rbrace $. By a union-of-events bound
\begin{multline*}
\lsup{n}(\rho_n (2n+1))^{-1} \log \mathbb{P}\big( \breve{\mu}^n_{j_n} \in F \big) \leq   \lsup{n}(\rho_n (2n+1))^{-1} \log \mathbb{P}\big( \breve{\mu}^n_{j_n} \in F \text{ and }\tilde{\mu}^n_{j_n} \in \mathcal{V}_\epsilon \big) \\
=   \lsup{n}(\rho_n (2n+1))^{-1} \log \mathbb{P}\big( \tilde{\mu}^n_{j_n} \in \pi^{-1}(F) \text{ and }\tilde{\mu}^n_{j_n} \in \mathcal{V}_\epsilon \big) 
\leq -\inf_{\nu \in \pi^{-1}(F) \cap \mathcal{V}_{\epsilon}}I_\alpha(\nu)
\end{multline*}
If $\inf_{\nu \in \pi^{-1}(F) }I_\alpha(\nu) = \infty$, then it immediately follows that
\begin{equation*}
\lsup{n}(\rho_n (2n+1))^{-1} \log \mathbb{P}\big( \breve{\mu}^n_{j_n} \in F \big) \leq  - \inf_{\mu \in F}I_\alpha(\nu).
\end{equation*}
Otherwise, suppose that $\inf_{\nu \in \pi^{-1}(F) }I_\alpha(\nu) < \infty$. For any $\mu \in F$ and any $a> 0$, $a\mu \in \pi^{-1}F$. We thus find from Lemma \ref{Lemma Infimum Rate Function} that 
\[
\inf_{\nu \in \pi^{-1}(F)}\tilde{I}_\alpha(\nu) = \inf_{\mu \in F}I_\alpha(\mu).
\]
\end{proof}
The following Lemma establishes an `exponential tightness' property.
\begin{lemma}\label{Lemma Exponential Tightness}
Let $\lbrace j_n \rbrace_{n\geq 1}$ be any sequence such that there exists $p$ such that $\theta^{j_n}_n \in R_p$ for all $n\geq 1$. For any $M > 0$, there exists a subset $\mathcal{U}_m \subset  \mathcal{M}^+\big( \mathbb{S}^1 \times \mathcal{E} \big)$, where for $m \geq 0$,
\begin{equation}\label{eq: U m definition}
\mathcal{U}_m = \big\lbrace \mu \in \mathcal{M}^+(\mathbb{S}^1\times\mathcal{E}) \; : \; \mu(w=1) \leq m \big\rbrace
\end{equation}
 such that 
\begin{equation}
\lsup{n}(\rho_n (2n+1))^{-1} \log \mathbb{P}\big( \tilde{\mu}^n_{j_n} \notin \mathcal{U}_m \big) \leq -M
\end{equation}
\end{lemma}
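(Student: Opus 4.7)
The plan is to reduce the event $\{\tilde{\mu}^n_{j_n}\notin\mathcal{U}_m\}$ to a tail event for a sum of independent Bernoullis and then apply a Chernoff bound. From the definition of $\tilde{\mu}^n_j$, one has
\[
\tilde{\mu}^n_{j_n}(w=1) = \frac{1}{\rho_n(2n+1)}\sum_{k\in I_n}\chi\{w^{j_n k}=1\} = \frac{\kappa^{j_n}_n}{\rho_n(2n+1)},
\]
so the event $\{\tilde{\mu}^n_{j_n}\notin\mathcal{U}_m\}$ is exactly $\{\kappa^{j_n}_n > m\rho_n(2n+1)\}$. Since the edges $\{w^{j_n k}\}_{k\in I_n}$ are independent Bernoullis with success probabilities $\rho_n\mathcal{C}(\theta^{j_n}_n,\theta^k_n)$, and these are all bounded above by $\rho_n C_{max}$ where $C_{max}=\sup_{\theta,\alpha\in\mathbb{S}^1}\mathcal{C}(\theta,\alpha)<\infty$ by the standing assumption, this is a tail probability for a sum of independent bounded Bernoullis.

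Next I would apply the exponential Markov inequality: for any $\lambda>0$,
\[
\mathbb{P}\bigl(\kappa^{j_n}_n > m\rho_n(2n+1)\bigr) \leq e^{-\lambda m\rho_n(2n+1)}\,\mathbb{E}\bigl[e^{\lambda \kappa^{j_n}_n}\bigr].
\]
Using independence of the $w^{j_n k}$ and the elementary bound $1+x\leq e^x$, each factor satisfies
\[
\mathbb{E}\bigl[e^{\lambda w^{j_n k}}\bigr] = 1+\rho_n\mathcal{C}(\theta^{j_n}_n,\theta^k_n)(e^\lambda-1) \leq \exp\bigl(\rho_n C_{max}(e^\lambda-1)\bigr),
\]
hence $\mathbb{E}[e^{\lambda\kappa^{j_n}_n}]\leq \exp\bigl((2n+1)\rho_n C_{max}(e^\lambda-1)\bigr)$. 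Combining,
\[
(\rho_n(2n+1))^{-1}\log\mathbb{P}\bigl(\tilde{\mu}^n_{j_n}\notin\mathcal{U}_m\bigr) \leq C_{max}(e^\lambda-1)-\lambda m.
\]

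Finally, I would specialize to $\lambda=1$ (or optimize, but this is unnecessary), giving the uniform in $n$ bound $C_{max}(e-1)-m$. Choosing $m\geq M+C_{max}(e-1)$ yields
\[
\lsup{n}(\rho_n(2n+1))^{-1}\log\mathbb{P}\bigl(\tilde{\mu}^n_{j_n}\notin\mathcal{U}_m\bigr) \leq -M,
\]
as required. The argument is robust: nothing depends on the particular sequence $j_n$ or on the partition interval $R_p$ since only the uniform upper bound $C_{max}$ is used. There is no real obstacle here; the only mild subtlety is that although the Bernoulli parameters $\rho_n\mathcal{C}(\theta^{j_n}_n,\theta^k_n)$ shrink with $n$, the relevant normalization $\rho_n(2n+1)$ in the denominator precisely matches this, so the Chernoff estimate gives a non-trivial rate on the scale $\rho_n(2n+1)$ used throughout the section.
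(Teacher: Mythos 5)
Your proposal is correct and follows essentially the same route as the paper: a Chernoff/exponential-Markov bound on $\sum_k w^{j_n k}$, factorization of the moment generating function over the independent edges, and then choosing $m$ large relative to $M$ after fixing the tilt parameter at $1$. The only cosmetic difference is that you upper-bound each factor by $\exp(\rho_n C_{max}(e^\lambda-1))$ rather than computing the exact Riemann-sum limit $\frac{1}{2\pi}(e^a-1)\int_{\mathbb{S}^1}\mathcal{C}(\alpha,\beta)\,d\beta$ as the paper does, which is a harmless simplification.
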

\begin{proof}
By Chernoff's Inequality, for any $a > 0$, 
\begin{align}
\mathbb{P}\big( \tilde{\mu}^n_{j_n} \notin \mathcal{U}_m \big) \leq & \exp\big( -\rho_n (2n+1)am \big) \mathbb{E}\big[\exp\big(a  \sum_{k\in I_n}w^{j_n,k} \big) \big] \\
=& \exp\big( -\rho_n (2n+1)am \big)\prod_{k\in I_n}\big\lbrace 1+ \rho_n \mathcal{C}(\theta^{j_n}_n , \theta^{k}_n)\big( \exp(a) - 1 \big) \big\rbrace.
\end{align}
Similarly to the proof of Lemma \ref{Lemma LDP big space}, we find that 
\begin{equation}
\lsup{n}\big(\rho_n (2n+1)\big)^{-1} \log \prod_{k\in I_n}\big\lbrace 1+ \rho_n \mathcal{C}(\theta^{j_n}_n , \theta^{k_n}_n)\big( \exp(a) - 1 \big) \big\rbrace \to \frac{1}{2\pi} \big( \exp(a) - 1 \big)\int_{\mathbb{S}^1} \mathcal{C}(\alpha,\beta)d\beta .
\end{equation}
Taking $a=1$ and $m$ to be sufficiently large, the previous two equations imply that 
\begin{equation}
\lsup{n}\big\lbrace \rho_n(2n+1) \big\rbrace \log \mathbb{P}\big( \tilde{\mu}^n_{j_n} \notin \mathcal{U}_m \big) \leq - M.
\end{equation}
\end{proof}
\begin{lemma}\label{Lemma Exact Asymtotic LDP}
Let $\lbrace j_n \rbrace_{n\geq 1}$ be any sequence such that there exists $p$ such that $\theta^{j_n}_n \in R_p$ for all $n\geq 1$, and also $\theta^{j_n}_n \to \alpha \in R_p$. Let $\mathcal{O} \subset\mathcal{P}(\mathbb{S}^1)$ be open and let $\bar{\mathcal{O}}$ denote its closure. Then
\begin{align}
\lim_{n\to\infty}  \lbrace (2n+1)\rho_n \rbrace^{-1}\log \mathbb{P}\big(  \hat{\mu}^n_{j_n} \in \mathcal{O} \big) 
&=\lim_{n\to\infty}  \lbrace (2n+1)\rho_n \rbrace^{-1}\log \mathbb{P}\big(  \hat{\mu}^n_{j_n} \in \bar{\mathcal{O}} \big)\nonumber \\
 &= - \inf_{\mu \in \mathcal{O}} I_\alpha(\mu) = - \inf_{\mu \in \bar{\mathcal{O}}} I_\alpha(\mu)   . \label{eq: Log Moment Generating Function H n j}
\end{align}
\end{lemma}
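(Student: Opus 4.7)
The strategy is to derive the claimed exact asymptotic from the preceding LDP for the conditional empirical measure, together with an approximation argument showing that the rate-function infima over $\mathcal{O}$ and $\bar{\mathcal{O}}$ coincide.

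First I would invoke the LDP of the preceding lemma: the upper bound applied to the closed set $\bar{\mathcal{O}}$ gives
\[
\lsup{n} \lbrace (2n+1)\rho_n \rbrace^{-1} \log \mathbb{P}\bigl(\hat{\mu}^n_{j_n} \in \bar{\mathcal{O}}\bigr) \leq -\inf_{\mu \in \bar{\mathcal{O}}} I_\alpha(\mu),
\]
while the lower bound applied to $\mathcal{O}$ gives
\[
\linf{n} \lbrace (2n+1)\rho_n \rbrace^{-1} \log \mathbb{P}\bigl(\hat{\mu}^n_{j_n} \in \mathcal{O}\bigr) \geq -\inf_{\mu \in \mathcal{O}} I_\alpha(\mu).
\]
Since $\mathcal{O} \subseteq \bar{\mathcal{O}}$, the second probability is dominated by the first, and the two bounds chain together to pinch the limit, provided one can establish the rate-function identity
\[
\inf_{\mu \in \mathcal{O}} I_\alpha(\mu) = \inf_{\mu \in \bar{\mathcal{O}}} I_\alpha(\mu). \qquad (\star)
\]

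The inequality $\inf_{\bar{\mathcal{O}}} I_\alpha \leq \inf_{\mathcal{O}} I_\alpha$ is immediate. For the reverse, given any boundary point $\mu^\star \in \partial\mathcal{O}$, I would construct a sequence $\mu_k \in \mathcal{O}$ with $\mu_k \to \mu^\star$ weakly (so $\mu_k \in \mathcal{O}$ eventually by openness) and $I_\alpha(\mu_k) \to I_\alpha(\mu^\star)$. When $\mu^\star$ admits a density $\zeta^\star$, mild convex combinations $\mu_k = (1-\varepsilon_k)\mu^\star + \varepsilon_k \nu_0$ with a fixed smooth strictly-positive $\nu_0$ and $\varepsilon_k \downarrow 0$ (or a gentle mollification) suffice: continuity of the functional $\zeta \mapsto \frac{1}{2\pi}\int \zeta\log(\zeta/\mathcal{C}(\alpha,\cdot))\,d\eta$ on such families, combined with the positive upper and lower bounds on $\mathcal{C}$ assumed in this section, yields $I_\alpha(\mu_k) \to I_\alpha(\mu^\star)$. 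When $\mu^\star$ is singular, $I_\alpha(\mu^\star) = \frac{1}{2\pi}\int \mathcal{C}(\alpha,\eta)\,d\eta$, the uniform maximum of $I_\alpha$; approximating $\mu^\star$ weakly by sharply concentrated densities $\zeta_k$ (shrinking mollifications of $\mu^\star$) forces $\frac{1}{2\pi}\int \zeta_k \log(\zeta_k/\mathcal{C})\,d\eta \to +\infty$, so the subtracted exponential in the definition of $I_\alpha$ vanishes and $I_\alpha(\mu_k) \to I_\alpha(\mu^\star)$.

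The delicate step is the singular case of this approximation: one must pick $\zeta_k$ so as to simultaneously remain inside $\mathcal{O}$ and drive the entropy-like integral to infinity. Because $\mathcal{C}$ is uniformly bounded above and below away from zero, $\log(\zeta_k/\mathcal{C})$ differs from $\log\zeta_k$ by a bounded function, so the blow-up is governed purely by the concentration of $\zeta_k$; the openness of $\mathcal{O}$ gives enough room to choose such a concentrating sequence that still lies in $\mathcal{O}$. Once $(\star)$ is established, the chain of inequalities above collapses to a single equality, so both $\log\mathbb{P}(\hat{\mu}^n_{j_n} \in \mathcal{O})$ and $\log\mathbb{P}(\hat{\mu}^n_{j_n} \in \bar{\mathcal{O}})$ admit the same limit $-\inf_{\mu \in \mathcal{O}} I_\alpha(\mu) = -\inf_{\mu \in \bar{\mathcal{O}}} I_\alpha(\mu)$, which is exactly \eqref{eq: Log Moment Generating Function H n j}.
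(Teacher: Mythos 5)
Your overall architecture is the same as the paper's: combine the LDP upper bound on $\bar{\mathcal{O}}$ and lower bound on $\mathcal{O}$ with the identity $\inf_{\mu\in\mathcal{O}}I_\alpha(\mu)=\inf_{\mu\in\bar{\mathcal{O}}}I_\alpha(\mu)$, and the chaining of inequalities you describe does collapse everything once that identity is in hand. The gap is in your proof of the identity $(\star)$ itself. You take a (near-)minimizer $\mu^\star\in\partial\mathcal{O}$, build $\mu_k\to\mu^\star$ weakly with $I_\alpha(\mu_k)\to I_\alpha(\mu^\star)$, and assert that ``$\mu_k\in\mathcal{O}$ eventually by openness.'' That implication is backwards: openness guarantees that sequences converging to a point \emph{of} $\mathcal{O}$ eventually enter $\mathcal{O}$, but $\mu^\star$ is a boundary point, and a sequence converging to a boundary point of an open set need not ever meet the set. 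Your candidate approximants $(1-\varepsilon_k)\mu^\star+\varepsilon_k\nu_0$ with a \emph{fixed} $\nu_0$ can perfectly well leave $\mathcal{O}$ entirely (and likewise for mollifications in the singular case, where you again appeal to ``openness gives enough room''). Since $I_\alpha$ is only lower semicontinuous, not continuous, the equality of the two infima is genuinely a statement about the geometry of $\mathcal{O}$ near its boundary minimizers, and it cannot be obtained by a direction-blind approximation argument.

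This is exactly why the paper does not attempt $(\star)$ for arbitrary open sets. It first writes $\mathcal{O}$ as a countable union of sets of the special form $\mathcal{O}_p=\lbrace\mu:\mu(F_i)>\lambda_i,\ 1\leq i\leq m\rbrace$ with $F_i$ closed, disjoint, and $\sum_i\lambda_i<1$, and proves the infimum identity only for these (Lemma \ref{Lemma Log Moment }). There the structure is used essentially: the comparison measure $\upsilon$ is chosen supported on $\cup_iF_i$ with $\upsilon(F_i)=\lambda_i/\sum_q\lambda_q>\lambda_i$, so that the convex combinations $(1-q^{-1})\nu+q^{-1}\upsilon$ are \emph{guaranteed} to satisfy the strict constraints defining $\mathcal{O}_p$ (via the Portmanteau bound $\nu(F_i)\geq\lambda_i$), while monotone and dominated convergence control $I_\alpha$ along the family. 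To repair your argument you would need either to carry out this reduction to special generating sets, or to supply, for a general open $\mathcal{O}$, a construction of approximants that provably stay inside $\mathcal{O}$ while controlling $I_\alpha$ --- which is the content you have currently assumed rather than proved. (A minor additional remark: in the singular case $I_\alpha(\mu^\star)$ equals the global maximum $\frac{1}{2\pi}\int_{\mathbb{S}^1}\mathcal{C}(\alpha,\eta)\,d\eta$ of $I_\alpha$, so if the infimum over $\bar{\mathcal{O}}$ is attained only at such a point the identity is trivial; your elaborate concentration argument there is unnecessary.)
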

\begin{proof}
For $1\leq p \leq M$, let $\mathcal{O}_p \subseteq \mathcal{P}( \mathbb{S}^1 )$ be any set of the form
\begin{equation}
\mathcal{O}_p = \big\lbrace\mu \in \mathcal{P}( \mathbb{S}^1 ) \; : \; \mu( F^p_i ) > \lambda^p_i \text{ for all }1\leq i \leq m_p \big\rbrace, 
\end{equation}
for some $m_p\in \mathbb{Z}^+$, constants $\lambda^p_i \in [0,1)$ such that
\begin{equation}\label{eq: sum lambda}
\sum_{i=1}^{m_p} \lambda^p_i < 1,
\end{equation}
and closed mutually disjoint sets $F^p_i \subset \mathbb{S}^1$, i.e. $F^p_i \cap F^p_j = \emptyset \text{ if }i\neq j$. It follows from the Portmanteau Theorem that $\mathcal{O}_p$ is open \cite{Billingsley1999}.

We now claim that one must be able to find a countably infinite set of such open sets such that
\begin{equation}\label{eq: countably infinite covering}
\mathcal{O} = \cup_{p\geq 1} \mathcal{O}_p.
\end{equation}
This is because $\mathcal{P}(\mathbb{S}^1)$ is separable with respect to the weak topology, and open sets of the type $\mathcal{O}_p$ generate the weak topology \cite{Billingsley1999}. It follows from \eqref{eq: countably infinite covering} that
\begin{equation}\label{eq: countably infinite covering 2}
\bar{\mathcal{O}} = \cup_{p\geq 1} \bar{\mathcal{O}}_p.
\end{equation}
Using the Large Deviations estimate (in the second line) and Lemma \ref{Lemma Log Moment } in the penultimate line,
\begin{align}
\lsup{n} \big\lbrace (2n+1)\rho_n \big\rbrace^{-1}\log \mathbb{P}\big( \hat{\mu}^n_{j_n} \in \mathcal{O}  \big)  
&\leq \lim_{n\to\infty}  \big\lbrace (2n+1)\rho_n \big\rbrace^{-1}\log \mathbb{P}\big(  \hat{\mu}^n_{j_n} \in \bar{\mathcal{O}}  \big)\nonumber\\  &= - \inf_{\mu \in \bar{\mathcal{O}}} I_\alpha(\mu) 
= - \inf_{p\geq 1}\inf_{\mu \in \bar{\mathcal{O}}_p} I_\alpha(\mu)\nonumber \\ &= - \inf_{p\geq 1}\inf_{\mu \in \mathcal{O}_p} I_\alpha(\mu) = - \inf_{\mu \in \mathcal{O}}I_\alpha(\mu).
\end{align}
The Large Deviations estimate also implies that
\begin{equation}
\linf{n} \big\lbrace (2n+1)\rho_n \big\rbrace^{-1}\log \mathbb{P}\big(  \hat{\mu}^n_{j_n} \in \mathcal{O}  \big)  \geq - \inf_{\mu \in \mathcal{O}}I_\alpha(\mu).
\end{equation}
We have thus established the lemma.
\end{proof}

\begin{lemma}\label{Lemma Log Moment }
Let $\mathcal{O} \subseteq \mathcal{P}( \mathbb{S}^1 )$ be any set of the form
\begin{equation}
\mathcal{O} = \big\lbrace\mu \in \mathcal{P}( \mathbb{S}^1 ) \; : \; \mu( F_i ) > \lambda_i \text{ for all }1\leq i \leq m \big\rbrace, 
\end{equation}
for some $m\in \mathbb{Z}^+$, constants $\lambda_i \in [0,1)$ such that
\begin{equation}\label{eq: sum lambda}
\sum_{i=1}^m \lambda_i < 1,
\end{equation}
and closed mutually disjoint sets $F_i \subset \mathbb{S}^1$, i.e. $F_i \cap F_j = \emptyset \text{ if }i\neq j$. Then
\begin{equation}\label{eq: rate function continuity}
\inf_{\mu \in \mathcal{O}}I_{\alpha}(\mu) = \inf_{\mu \in \bar{\mathcal{O}}}I_{\alpha}(\mu) .
\end{equation}

\end{lemma}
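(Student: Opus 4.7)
The plan is to establish the non-trivial direction $\inf_{\mathcal{O}}I_\alpha \leq \inf_{\bar{\mathcal{O}}}I_\alpha$ by a convex-combination approximation. Fix $\mu \in \bar{\mathcal{O}}$; the Portmanteau theorem applied to each closed $F_i$ gives $\mu(F_i) \geq \lambda_i$. Because $\sum_i \lambda_i < 1$, there is genuine slack, and setting $\delta = 1-\sum_i \lambda_i > 0$ I would pick a reference $\nu \in \mathcal{O}$ with bounded continuous density $\zeta_\nu$, e.g.
\[
\nu = \sum_{i=1}^m \Big(\lambda_i + \frac{\delta}{2m}\Big)\mathrm{Unif}(F_i) + \frac{\delta}{2}\,\mathrm{Unif}\Big(\mathbb{S}^1 \setminus \bigcup_i F_i\Big),
\]
so that $\nu(F_i) > \lambda_i$ for each $i$. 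Define $\mu_\epsilon = (1-\epsilon)\mu + \epsilon\nu$; then $\mu_\epsilon(F_i) = (1-\epsilon)\mu(F_i) + \epsilon\nu(F_i) > \lambda_i$, so $\mu_\epsilon \in \mathcal{O}$, and $\mu_\epsilon \to \mu$ in total variation (hence weakly) as $\epsilon \to 0$.

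Next I would reformulate $I_\alpha$ in terms of a relative-entropy functional. Writing $A_\alpha = \frac{1}{2\pi}\int_{\mathbb{S}^1}\mathcal{C}(\alpha,\eta)\,d\eta$ and
\[
K_\alpha(\mu) = \begin{cases} \frac{1}{2\pi}\int_{\mathbb{S}^1}\zeta(\eta)\log\Big(\zeta(\eta)/\mathcal{C}(\alpha,\eta)\Big)d\eta & \text{if } d\mu = \tfrac{\zeta(\eta)}{2\pi}\,d\eta, \\ +\infty & \text{otherwise,} \end{cases}
\]
the formula for $I_\alpha$ becomes $I_\alpha(\mu) = A_\alpha - \exp(-K_\alpha(\mu))$. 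The key structural observation is that $K_\alpha$ is the (unnormalized) relative entropy of $\mu$ with respect to the finite positive reference measure $\mathcal{C}(\alpha,\cdot)/(2\pi)\,d\eta$; thus $K_\alpha$ is convex in $\mu$ and weakly lower semi-continuous by classical results on relative entropy.

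The last step is to show $I_\alpha(\mu_\epsilon) \to I_\alpha(\mu)$. If $K_\alpha(\mu) < \infty$, convexity yields $K_\alpha(\mu_\epsilon) \leq (1-\epsilon)K_\alpha(\mu) + \epsilon K_\alpha(\nu)$, and $K_\alpha(\nu) < \infty$ because $\zeta_\nu$ is bounded away from $0$ and $\infty$, so $\limsup_{\epsilon \to 0}K_\alpha(\mu_\epsilon) \leq K_\alpha(\mu)$; combined with lower semi-continuity along $\mu_\epsilon \to \mu$, one gets $K_\alpha(\mu_\epsilon) \to K_\alpha(\mu)$. If instead $K_\alpha(\mu) = \infty$ (either $\mu$ is not absolutely continuous or its density has infinite entropy), lower semi-continuity alone forces $K_\alpha(\mu_\epsilon) \to \infty$, so $\exp(-K_\alpha(\mu_\epsilon)) \to 0 = \exp(-K_\alpha(\mu))$. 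In both cases $I_\alpha(\mu_\epsilon) \to I_\alpha(\mu)$, whence $\inf_{\mathcal{O}}I_\alpha \leq I_\alpha(\mu)$ for every $\mu \in \bar{\mathcal{O}}$, and the infimum over $\mu$ closes the argument.

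The subtlest point is the construction of $\nu$ with a density when some $F_i$ with $\lambda_i = 0$ has Lebesgue measure zero, since no absolutely continuous probability measure can then satisfy $\mu_\epsilon(F_i) > 0$; however, in that regime both $\mathcal{O}$ and $\bar{\mathcal{O}}$ consist solely of measures carrying singular mass on $F_i$, on which $I_\alpha \equiv A_\alpha$, so the equality \eqref{eq: rate function continuity} is trivial. All other ingredients---the convex combination, the identification of $K_\alpha$ as a relative entropy, its convexity and lower semi-continuity---are standard.
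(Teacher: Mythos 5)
Your proposal follows essentially the same strategy as the paper's proof: interpolate a point $\mu$ of $\bar{\mathcal{O}}$ (the paper works with the minimizer, which exists by lower semi-continuity and compactness) with a reference measure lying in $\mathcal{O}$, check that the convex combinations stay in $\mathcal{O}$ thanks to the slack $\sum_i\lambda_i<1$, and then show that $I_\alpha$ is continuous along the interpolating path. The difference is in that last step: the paper verifies $R(\upsilon^q)\to R(\nu)$ by hand, splitting $\mathbb{S}^1$ into a region where monotone convergence applies and a complement where dominated convergence applies, whereas you identify $K_\alpha$ with the relative entropy against the finite reference measure $\mathcal{C}(\alpha,\cdot)\,d\eta/(2\pi)$ and invoke its convexity (for the $\limsup$) and weak lower semi-continuity (for the $\liminf$). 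This is cleaner, and it also handles the case $K_\alpha(\mu)=\infty$ uniformly, which the paper sidesteps by reducing at the outset to $I_\alpha(\nu)<A_\alpha$. Your reference measure is in fact better than the paper's: the paper's $\upsilon$ with $\upsilon(F_i)=\lambda_i/\sum_q\lambda_q$ does not lie in $\mathcal{O}$ when some $\lambda_i=0$ (and is undefined when all $\lambda_i$ vanish), whereas your choice $\nu(F_i)=\lambda_i+\delta/(2m)$ delivers the required strict inequalities for every $i$.

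One caveat: your resolution of the degenerate case where some $F_i$ is Lebesgue-null is not correct as stated. If $\mathrm{Leb}(F_i)=0$ and $\lambda_i>0$, your claim is fine: every element of $\bar{\mathcal{O}}$ satisfies $\mu(F_i)\geq\lambda_i>0$, hence is singular, hence $I_\alpha\equiv A_\alpha$ on both sets. But if $\mathrm{Leb}(F_i)=0$ and $\lambda_i=0$, the constraint $\mu(F_i)\geq 0$ is vacuous after taking closures, so $\bar{\mathcal{O}}$ can contain absolutely continuous measures with $I_\alpha<A_\alpha$; for example with $m=1$, $F_1=\{0\}$, $\lambda_1=0$, the sequence $(1-n^{-1})\mathrm{Unif}(\mathbb{S}^1)+n^{-1}\delta_0$ shows that $\mathrm{Unif}(\mathbb{S}^1)\in\bar{\mathcal{O}}$, while every element of $\mathcal{O}$ carries an atom so that $\inf_{\mathcal{O}}I_\alpha=A_\alpha$. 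In that sub-case the asserted identity actually fails rather than being trivial, so the lemma needs the implicit hypothesis that each $F_i$ has positive Lebesgue measure (or that $\lambda_i>0$ whenever it does not). The paper's own construction of $\upsilon$ silently makes the same assumption, so this is a defect of the statement rather than of your argument specifically, but your closing parenthetical should be replaced by that explicit hypothesis rather than by the claim of triviality.
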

\begin{proof}
Since $I_{\alpha}$ is lower semi-continuous, there exists $\nu \in \bar{\mathcal{O}}$ such that 
\[
 \inf_{\mu \in \bar{\mathcal{O}}}I_{\alpha}(\mu) = I_\alpha(\nu).
\]
Since $I_\alpha(\gamma) \leq \frac{1}{2\pi}\int_{\mathbb{S}^1} \mathcal{C}(\alpha,\theta)d\theta $ for all $\gamma \in \mathcal{P}(\mathbb{S}^1)$, we may assume that
\[
 I_\alpha(\nu) < \frac{1}{2\pi}\int_{\mathbb{S}^1} \mathcal{C}(\alpha,\theta)d\theta ,
\]
since otherwise the lemma is obviously true. The definition of $I_\alpha$ implies that $\nu$ has a density $\zeta(\theta) / (2\pi)$. It follows from the Portmanteau Theorem that
\begin{equation}
\nu(F_i) \geq \lambda_i \text{ for all }1\leq i \leq m
\end{equation}
and therefore
\begin{equation}\label{eq: zeta lambda upper bound}
\frac{1}{2\pi}\int_{F_i}\zeta(\theta)d\theta \geq \lambda_i.
\end{equation}
Notice that 
\begin{align}
I_\alpha(\nu) =&  \frac{1}{2\pi}\int_{\mathbb{S}^1} \mathcal{C}(\alpha,\theta)d\theta - \exp\big(-R(\nu) \big) \text{ where }\\
R(\nu) =& \frac{1}{2\pi} \int_{\mathbb{S}^1} \zeta(\theta)\log\big( \zeta(\theta) / \mathcal{C}(\alpha,\theta) \big)d\theta .
\end{align}
Define $\upsilon \in \mathcal{O}$ to be such that $\upsilon$ has a density $\phi(\theta) / (2\pi)$ and also satisfies the following properties:
\begin{align}
\upsilon\big( \cup_{i} F_i \big) =& 1 \\
\phi &\text{ is constant over each }F_i \\
\upsilon( F_i) =& \lambda_i / (\sum_{q=1}^m \lambda_q).\label{eq: upsilon F i}
\end{align}
Next, for $q\in \mathbb{Z}^+$, define the measure $\upsilon^q = (1-q^{-1})\nu +q^{-1}\upsilon$, and observe that $\upsilon^q$ has a density $\zeta^q$ given by $\zeta^q(\theta) = (1-q^{-1})\zeta(\theta) + q^{-1}\phi(\theta)$. It follows from \eqref{eq: zeta lambda upper bound} and \eqref{eq: upsilon F i} that $\upsilon^q \in \mathcal{O}$. It thus remains for us to demonstrate that $\lim_{q\to\infty}I_{\alpha}(\upsilon^q) = I_\alpha(\nu)$.

Let $A \subset \mathbb{S}^1$ be the set
\begin{equation}
A = \big\lbrace  \theta : \zeta(\theta) \geq \phi(\theta) \text{ and }\zeta(\theta) \geq \mathcal{C}(\alpha,\theta) \big\rbrace.
\end{equation}
We have that 
\begin{align*}
R(\upsilon^q) =  \frac{1}{2\pi} \int_{A} \zeta^q(\theta)\log\big( \zeta^q(\theta) / \mathcal{C}(\alpha,\theta) \big)d\theta +   \frac{1}{2\pi} \int_{\mathbb{S}^1 / A} \zeta^q(\theta)\log\big( \zeta^q(\theta) / \mathcal{C}(\alpha,\theta) \big)d\theta.
\end{align*}
Thanks to the dominated convergence theorem, since $\zeta^q(\theta) \to \zeta(\theta)$ as $q\to\infty$,
\[
\lim_{q\to\infty} \int_{\mathbb{S}^1 / A} \zeta^q(\theta)\log\big( \zeta^q(\theta) / \mathcal{C}(\alpha,\theta) \big)d\theta = \int_{\mathbb{S}^1 / A}  \zeta(\theta)\log\big( \zeta(\theta) / \mathcal{C}(\alpha,\theta) \big)d\theta .
\]
 Now one easily checks that for $\theta \in A$,
\[
\zeta^{q+1}(\theta)\log\big( \zeta^{q+1}(\theta) / \mathcal{C}(\alpha,\theta) \big) \geq \zeta^q(\theta)\log\big( \zeta^q(\theta) / \mathcal{C}(\alpha,\theta) \big),
\]
since the function $x \mapsto x\log(x)$ is increasing for $x \geq 1$. We thus find from the monotone convergence theorem that
\[
\lim_{q\to\infty} \int_{A} \zeta^q(\theta)\log\big( \zeta^q(\theta) / \mathcal{C}(\alpha,\theta) \big)d\theta =    \frac{1}{2\pi} \int_{ A} \zeta(\theta)\log\big( \zeta(\theta) / \mathcal{C}(\alpha,\theta) \big)d\theta .
\]
We can thus conclude that $\lim_{q\to\infty}R(\upsilon^q) = R(\nu)$, and therefore $\lim_{q\to\infty}I_{\alpha}(\upsilon^q) = I_\alpha(\nu)$. We have established \eqref{eq: rate function continuity}.

\end{proof}

\begin{lemma}\label{Lemma LDP big space}
Let $\lbrace j_n \rbrace_{n\geq 1}$ be any sequence such that there exists $p$ such that $\theta^{j_n}_n \in R_p$ for all $n\geq 1$, and also $\theta^{j_n}_n \to \alpha \in R_p$. Then $\lbrace \tilde{\Pi}^n_{j_n} \rbrace_{n\geq 1}$  satisfy a Large Deviation Principle with rate function
\begin{align}
\tilde{I}_{\alpha}(\nu) &= \left\lbrace \begin{array}{c}
\infty  \text{ if }\nu(\cdot ,1) \text{ does not have a density, otherwise} \\
 \frac{1}{2\pi}\int_{\mathbb{S}^1}\big\lbrace \gamma(\theta) \log\big(\frac{\gamma(\theta)}{C(\alpha,\theta)} \big) - \gamma(\theta) + \mathcal{C}(\alpha,\theta)\big\rbrace d\theta \text{ if }\nu(d\theta,1)= \frac{\gamma(\theta)}{2\pi}d\theta\\
\end{array}\right. \label{eq: I alpha nu}.
\end{align}
The LDP means that for open sets $O \subset \mathcal{M}^+(\mathbb{S}^1\times \mathcal{E} )$ and closed sets $F \subset \mathcal{M}^+(\mathbb{S}^1\times\mathcal{E} )$,
\begin{align}
\lsup{n}\frac{1}{\rho_n(2n+1)^2} \log \tilde{\Pi}^n_{j_n}(F) &\leq -\inf_{\nu \in F}\tilde{I}_\alpha(\nu) \label{eq: breve I nu 1} \\
\linf{n}\frac{1}{\rho_n(2n+1)^2} \log \tilde{\Pi}^n_{j_n}(O)&\geq -\inf_{\nu \in O}\tilde{I}_\alpha(\nu) .\label{eq: breve I nu 2}
\end{align}
$\tilde{I}$ is lower semicontinuous and has compact level sets.
\end{lemma}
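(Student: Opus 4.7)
My plan is to derive this LDP from a G\"artner--Ellis/Baldi-type argument (cf.\ \cite[Theorem~4.5.20]{Dembo1998}), exploiting the fact that the topology $\tilde{\mathcal{T}}$ on $\mathcal{M}^+(\mathbb{S}^1\times\mathcal{E})$ is generated by the linear functionals $\nu \mapsto \mathbb{E}^\nu[g(\theta)w] = \int g(\theta)\,\nu(d\theta,1)$ for $g\in\mathcal{C}(\mathbb{S}^1)$, so that effectively only the ``$w=1$'' marginal is being tested. The two essential ingredients are the limiting logarithmic moment generating function along these directions and the exponential tightness already provided by Lemma \ref{Lemma Exponential Tightness}.

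First I would compute the log-moment generating function. Since the $w^{j_n,k}$ are independent Bernoullis with parameters $\rho_n\mathcal{C}(\theta^{j_n}_n,\theta^k_n)$, for any continuous $g:\mathbb{S}^1\to\mathbb{R}$,
\[
\Lambda_n(g) := \frac{1}{\rho_n(2n+1)}\log\mathbb{E}\Bigl[\exp\Bigl(\sum_{k\in I_n} g(\theta^k_n)w^{j_n,k}\Bigr)\Bigr] = \frac{1}{\rho_n(2n+1)}\sum_{k\in I_n}\log\bigl(1+\rho_n\mathcal{C}(\theta^{j_n}_n,\theta^k_n)(e^{g(\theta^k_n)}-1)\bigr).
\]
Using $\log(1+x)=x+O(x^2)$, the uniform spacing of the $\theta^k_n$, the uniform convergence $\mathcal{C}(\theta^{j_n}_n,\cdot)\to\mathcal{C}(\alpha,\cdot)$ (which holds because $\alpha\in R_p$ and $\mathcal{C}$ is continuous in its second argument and piecewise constant in its first), and $\rho_n\to 0$, a Riemann-sum argument yields
\[
\Lambda(g) := \lim_{n\to\infty}\Lambda_n(g) = \frac{1}{2\pi}\int_{\mathbb{S}^1}\mathcal{C}(\alpha,\theta)\bigl(e^{g(\theta)}-1\bigr)\,d\theta.
\]

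Second I would identify the Fenchel--Legendre transform $\Lambda^*(\nu) = \sup_{g\in\mathcal{C}(\mathbb{S}^1)}\{\int g\,d\nu(\cdot,1) - \Lambda(g)\}$. For $\nu$ with $\nu(d\theta,1)=\gamma(\theta)\,d\theta/(2\pi)$, a pointwise variational calculation identifies the maximizer $g^*(\theta) = \log(\gamma(\theta)/\mathcal{C}(\alpha,\theta))$ (with the convention $0\log 0 = 0$), and substitution produces exactly
\[
\Lambda^*(\nu) = \frac{1}{2\pi}\int_{\mathbb{S}^1}\bigl\{\gamma(\theta)\log(\gamma(\theta)/\mathcal{C}(\alpha,\theta)) - \gamma(\theta) + \mathcal{C}(\alpha,\theta)\bigr\}\,d\theta,
\]
which matches $\tilde{I}_\alpha$. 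When $\nu(\cdot,1)$ fails to be absolutely continuous, continuous test functions concentrated near its singular part drive the supremum to $+\infty$, giving $\Lambda^*(\nu)=\infty$ as stated. Lower semi-continuity of $\Lambda^*$ is automatic from the supremum structure, and compactness of level sets will follow from exponential tightness.

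Finally I would apply Baldi's theorem to conclude the full LDP. The upper bound over closed $F$ follows from the pointwise convergence $\Lambda_n\to\Lambda$ together with the exponential tightness of Lemma \ref{Lemma Exponential Tightness}. The lower bound over open $O$ is the main obstacle: the standard Baldi hypothesis requires every point in the domain of $\Lambda^*$ to be an \emph{exposed point} with a continuous exposing hyperplane, which here amounts to checking that $g^* = \log(\gamma/\mathcal{C}(\alpha,\cdot))$ can be approximated by continuous exposing functions, with care taken near $\{\gamma=0\}$ and near points where $\gamma$ is unbounded. A parallel route, more consistent with the preceding sections, is to mirror the Dawson--G\"artner projective-limit argument used for $\breve{\mu}^n_*$: for each finite Borel partition $\{B_1,\dots,B_Q\}$ of $\mathbb{S}^1$, establish the joint LDP for $\bigl(\tilde{\mu}^n_{j_n}(B_i\times\{1\})\bigr)_{1\le i\le Q}$ by direct multinomial/Stirling estimates (each coordinate being a sum of independent small-parameter Bernoullis with Poisson-type rate $(2\pi)^{-1}\int_{B_i}\mathcal{C}(\alpha,\theta)\,d\theta$), then take the projective limit over refining partitions to recover $\tilde{I}_\alpha$ by a monotone-convergence argument analogous to Lemma \ref{Lemma Rate Function Equivalence}.
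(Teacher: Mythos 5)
Your computation of the limiting logarithmic moment generating function and its Fenchel--Legendre transform coincides with the paper's: the same Taylor expansion of $\log\bigl(1+\rho_n\mathcal{C}(\cdot,\cdot)(e^{g}-1)\bigr)$, the same Riemann-sum limit $\Lambda(g)=\tilde{\Lambda}(\alpha,g)$, the same pointwise optimization giving $g^*=\log(\gamma/\mathcal{C}(\alpha,\cdot))$, and the same test-function argument forcing $\tilde I_\alpha(\nu)=\infty$ when $\nu(\cdot,1)$ has a singular part. The only place you diverge is the final step, and there your primary route has a gap that you yourself flag but do not close: Baldi's theorem (\cite[Theorem 4.5.20]{Dembo1998}) requires the lower bound to be taken over exposed points with continuous exposing hyperplanes, and verifying that condition for densities $\gamma$ that vanish or are unbounded is exactly the difficulty you name as ``the main obstacle.'' The paper sidesteps this entirely by invoking the abstract G\"artner--Ellis theorem in the form of \cite[Corollary 4.6.14]{Dembo1998}: since $\Lambda(g)=\frac{1}{2\pi}\int_{\mathbb{S}^1}\mathcal{C}(\alpha,\theta)(e^{g(\theta)}-1)\,d\theta$ is finite everywhere and Gateaux differentiable in $g$ (its directional derivative in direction $f$ is $\frac{1}{2\pi}\int\mathcal{C}(\alpha,\theta)e^{g(\theta)}f(\theta)\,d\theta$), and exponential tightness holds by Lemma \ref{Lemma Exponential Tightness} together with the compactness of the sets $\mathcal{U}_m$, the full LDP with good convex rate function $\Lambda^*$ follows with no exposed-point analysis. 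You should replace the appeal to Baldi by this differentiability criterion; your fallback via Dawson--G\"artner projective limits over finite partitions would also work but duplicates machinery the paper reserves for the outer empirical measure $\breve{\mu}^n_*$, and as written it is only a sketch.
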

\begin{proof}
We first show that the LDP (relative to the topology $\tilde{\mathcal{T}}$ defined in \eqref{eq: Topology M S1 E}) holds with the rate function
\begin{align}
\tilde{I}_{\alpha}(\nu) &= \sup_{ h \in \mathcal{C}_b(\mathbb{S}^1)} \big\lbrace \mathbb{E}^{\nu}[h(\theta)w] - \tilde{\Lambda}(\alpha,h) \big\rbrace , \label{eq: breve I rate function} \\
\tilde{\Lambda}(\alpha,h) &= \frac{1}{2\pi} \int_{-\pi}^{\pi} \mathcal{C}(\alpha,\beta)\big(\exp\lbrace h(\beta)\rbrace-1\big) d\beta
\end{align}
Define the logarithmic moment generating function, for any bounded and continuous $h : \mathbb{S}^1 \to \mathbb{R}$,
\begin{equation}
\breve{\Lambda}(\alpha,h) = \lsup{n}\big\lbrace \rho_n (2n+1) \big\rbrace^{-1}\log \mathbb{E}\big[\exp\big( \sum_{k\in I_n}h(\theta^k_n)w^{j_n k} \big) \big],
\end{equation}
recalling that $ \lbrace j_n \rbrace_{n=1}^\infty $ is any sequence such that $\theta^j_n \to \alpha \in \mathbb{S}^1$. We first establish that for all such sequences,
\begin{equation} \label{eq: to prove Lambda H a b}
\breve{\Lambda}(\alpha,h) = \tilde{\Lambda}(\alpha,h).
\end{equation}
Observe that
\begin{multline*}
 \mathbb{E}\big[\exp\big( \sum_{k=-n}^n h(\theta^k_n)w^{j_n k}  \big)\big] = \mathbb{E}\big[\exp\big(h(\theta^{j_n}_n)+ \sum_{k \neq j_n}w^{j_nk}h(\theta^k_n)w^{j_n k} \big\rbrace \big)\big]\\   
 =\exp\big(h(\theta^{j_n}_n)\big)\prod_{k\neq j_n} \big\lbrace 1 +\rho_n\mathcal{C}\big(\theta^{j_n}_n,\theta^k_n\big)\big(\exp\big(h(\theta^k_n) \big)-1\big) \big\rbrace 
\end{multline*}
since $w^{jj} = 1$ identically. We thus find that
\begin{multline}
\lim_{n\to\infty}\big(\rho_n (2n+1)\big)^{-1} \log \mathbb{E}\big[\exp\big( \sum_{k \neq j_n} h(\theta^k_n)w^{j_n k}   \big)\big]=\\
\lim_{n\to\infty}\big(\rho_n (2n+1)\big)^{-1} \sum_{k\in I_n}\log \big\lbrace 1 +\rho_n\mathcal{C}(\theta^j_n,\theta^k_n)\big(\exp\big(h(\theta^k_n) \big)-1\big) \big\rbrace \\
=\lim_{n\to\infty}\big(\rho_n (2n+1)\big)^{-1} \sum_{k \in I_n}\big\lbrace \rho_n\mathcal{C}(\theta^j_n,\theta^k_n)\big(\exp\big(h(\theta^k_n) \big)-1\big) 
+ O(\rho_n^2) \big\rbrace ,
\end{multline}
through a second degree Taylor Expansion of the logarithmic function. Since $\rho_n \to 0$, the $O(\rho_n^2)$ term is asymptotically negligible, and we therefore obtain that
\begin{equation}
\lim_{n\to\infty}\big(\rho_n (2n+1)\big)^{-1} \log  \mathbb{E}\big[\exp\big( \sum_{k=-n}^n h(\theta^k_n)w^{j_n k}  \big)\big]
= \frac{1}{2\pi} \int_{-\pi}^{\pi} \mathcal{C}(\alpha,\beta)\big(\exp\big(h(\beta)\big)-1\big) d\beta . %+H(\alpha) \lim_{n\to\infty} \lbrace \rho_n (2n+1) \rbrace^{-1}.
\end{equation}
We have thus proved \eqref{eq: to prove Lambda H a b}. It follows from \cite[Corollary 4.6.14]{Dembo1998} that the sequence of probability laws $\lbrace \tilde{\Pi}^n_{j_n} \rbrace_{n\geq 1}$ satisfy a Large Deviation Principle. This is because the function $h \to \tilde{\Lambda}(\alpha,h)$ is Gateaux Differentiable. The exponential tightness property follows from Lemma \ref{Lemma Exponential Tightness}. One can adapt the proof of Prokhorov's Theorem to prove that $\mathcal{U}_m$ is compact for any $m > 0$. We have thus established that the sequence of probability laws $\lbrace \tilde{\Pi}^n_{j_n} \rbrace_{n\geq 1}$ satisfy a Large Deviation Principle with good rate function \eqref{eq: breve I rate function}.

Next we establish that if $\nu(w=1) > 0$ and $\nu(\cdot,1)$ does not have a density, then $\tilde{I}(\nu) = \infty$. To this end, let $B(\theta,\delta) \subset \mathbb{S}^1$ be the open ball centered at $\theta$ of radius $\delta$. Let $\upsilon \in \mathcal{P}(\mathbb{S}^1)$ be Lebesgue measure on $\mathbb{S}^1$. Suppose that there exists $\theta \in \mathbb{S}^1$ such that
\begin{equation}
\lim_{\delta \to 0}\frac{\nu\big( B(\theta,\delta)\big)}{ 2\delta} = \infty.
\end{equation}
For $\delta > 0$, define the function
\[
h_{\delta}(\alpha) = \left\lbrace \begin{array}{c c}
\delta^{-1} & \text{ if }x\in B(\theta,\delta) \\
0 & \text{ if }x \notin B(\theta,\delta + \delta^2) \\
\delta^{-1}\big(1- \delta^{-2}\inf_{y\in B(\theta,\delta)}\big\lbrace | \alpha-y \mod \mathbb{S}^1 | \big\rbrace \big)& \text{ otherwise.}
\end{array}\right.
\]
Notice that $h_{\delta}(\alpha)$ is continuous and bounded. We easily check that
\begin{align*}
\lim_{\delta \to 0}\mathbb{E}^{\nu}[ h_{\delta}] = \infty \; \; , \; \;
\lim_{\delta \to 0}\big|\tilde{\Lambda}(\alpha,h_{\delta}) \big| < \infty, 
\end{align*}
which means that $\tilde{I}_\alpha(\nu) = \infty$.

Finally, we suppose that $\nu(\cdot,1)$ has a density given by $\frac{1}{2\pi}\gamma(\cdot)$. We then see that
\begin{align*}
\tilde{I}_\alpha(\nu) = \sup_{h \in \mathcal{C}_b(\mathbb{S}^1)} \frac{1}{2\pi} \int_{\mathbb{S}^1} \gamma(\theta) \big\lbrace h(\theta) - \frac{\mathcal{C}(\alpha,\theta)}{\gamma(\theta)}\big(\exp(h(\theta))-1\big)  \big\rbrace d\theta .
\end{align*}
We can find the unique supremum of the integrand by differentiating. This implies that $\tilde{I}_\alpha(\nu)$ takes the form in \eqref{eq: I alpha nu}.
\end{proof}

\begin{lemma}\label{Lemma Infimum Rate Function}
Let $\mu \in \mathcal{P}(\mathbb{S}^1)$ have a density $\zeta(\theta) / (2\pi)$. Assume that
\begin{equation}
\int_{\mathbb{S}^1} \zeta(\theta)\log(\zeta(\theta))d\theta < \infty.
\end{equation}
Then
\begin{equation}
\inf_{\nu \in \mathcal{M}^+(\mathbb{S}^1\times\mathcal{E}): \pi\cdot \nu = \mu}\tilde{I}_{\alpha}(\nu) = I_{\alpha}(\mu).
\end{equation}
\end{lemma}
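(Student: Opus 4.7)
The plan is to reduce the infimum to a one-dimensional calculus problem by exploiting two facts: $\tilde{I}_\alpha(\nu)$ depends only on the restriction $\nu(\cdot,1)$, and the pushforward constraint $\pi\cdot\nu=\mu$ forces this restriction to be a positive multiple of $\mu$.

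First, I restrict attention to $\nu$ with $\tilde{I}_\alpha(\nu)<\infty$, so that by Lemma \ref{Lemma LDP big space} the measure $\nu(d\theta,1)$ has a density $\gamma(\theta)/(2\pi)$ with respect to Lebesgue measure. Since $\mu$ is a probability measure and $\mu=\pi\cdot\nu$ requires $\nu(w=1)>0$, I may set $c:=\nu(\mathbb{S}^1,1)=\frac{1}{2\pi}\int_{\mathbb{S}^1}\gamma(\theta)d\theta>0$. Then \eqref{eq: pi map} gives, for every Borel $B\subseteq\mathbb{S}^1$,
\begin{equation*}
\mu(B)=\frac{1}{c}\cdot\frac{1}{2\pi}\int_B\gamma(\theta)d\theta=\frac{1}{2\pi}\int_B\zeta(\theta)d\theta,
\end{equation*}
which forces $\gamma(\theta)=c\,\zeta(\theta)$ a.e. The values of $\nu$ on $\{w=0\}$ are irrelevant for both the rate function and the constraint, so the infimum is taken over the single parameter $c\in(0,\infty)$.

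Next, I substitute $\gamma=c\zeta$ into \eqref{eq: I alpha nu} and use $\frac{1}{2\pi}\int\zeta\,d\theta=1$ to obtain
\begin{equation*}
\tilde{I}_\alpha(\nu)=c\log c+c(A-1)+\bar{\mathcal{C}},
\end{equation*}
where $A:=\frac{1}{2\pi}\int_{\mathbb{S}^1}\zeta(\theta)\log\bigl(\zeta(\theta)/\mathcal{C}(\alpha,\theta)\bigr)d\theta$ and $\bar{\mathcal{C}}:=\frac{1}{2\pi}\int_{\mathbb{S}^1}\mathcal{C}(\alpha,\theta)d\theta$. The hypothesis $\int\zeta\log\zeta\,d\theta<\infty$, combined with the uniform upper and lower bounds on $\mathcal{C}$ assumed at the start of this subsection, ensures that $A$ is finite.

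Finally, the function $c\mapsto c\log c+c(A-1)+\bar{\mathcal{C}}$ is strictly convex on $(0,\infty)$ with derivative $\log c+A$, so its unique minimizer is $c^\ast=e^{-A}$, giving minimum value $\bar{\mathcal{C}}-e^{-A}$, which matches $I_\alpha(\mu)$ as defined in Theorem \ref{Theorem Sparse LDP}. Boundary behavior is harmless: as $c\to 0^+$ the functional tends to $\bar{\mathcal{C}}>\bar{\mathcal{C}}-e^{-A}$, and as $c\to\infty$ it tends to $+\infty$. There is no serious obstacle here; the only care needed is verifying that the $w=0$ component of $\nu$ drops out and that the constraint $\pi\cdot\nu=\mu$ really does reduce $\gamma$ to a scalar multiple of $\zeta$, both of which follow directly from the definitions.
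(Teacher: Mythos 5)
Your proof is correct and follows essentially the same route as the paper's: reduce the constraint $\pi\cdot\nu=\mu$ to a scalar multiple $\gamma=c\zeta$, substitute into the explicit formula for $\tilde{I}_\alpha$, and minimize over $c$ by elementary calculus, recovering $c^\ast=e^{-A}$ and the value $\bar{\mathcal{C}}-e^{-A}=I_\alpha(\mu)$. Your version is in fact slightly more careful than the paper's, since you verify convexity, the boundary behavior as $c\to 0^+$ and $c\to\infty$, and the finiteness of $A$.
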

\begin{proof}
It must be that
\begin{equation}
\int_{\mathbb{S}^1}\zeta(\theta)d\theta = 2\pi.
\end{equation} 
Any $\nu \in \mathcal{M}^+(\mathbb{S}^1\times\mathcal{E})$ such that $\pi\cdot \nu = \mu$ must have density $a\zeta(\theta)$, for some $a > 0$. We then find that
\begin{equation}\label{eq: breve I alpha}
\tilde{I}_{\alpha}(\nu) =  \frac{1}{2\pi}\int_{\mathbb{S}^1}\big\lbrace a\zeta(\theta) \log\big(\frac{a\zeta(\theta)}{\mathcal{C}(\alpha,\theta)} \big) - a\zeta(\theta) + \mathcal{C}(\alpha,\theta)\big\rbrace d\theta  := \Gamma(a).
\end{equation}
Differentiating with respect to $a$ to obtain the minimum, we find that
\begin{align*}
\frac{d\Gamma}{da} = \frac{1}{2\pi}\int_{\mathbb{S}^1}\big\lbrace \zeta(\theta) \log\bigg(\frac{\zeta(\theta)}{\mathcal{C}(\alpha,\theta)} \bigg) + \zeta\log(a)  \big\rbrace d\theta .
\end{align*}
We thus find that the value of $a$ at the unique critical point is
\begin{equation}
a_* = \exp\bigg(\frac{1}{2\pi}\int_{\mathbb{S}^1} \zeta(\theta) \log\big(\frac{\mathcal{C}(\alpha,\theta)} {\zeta(\theta)}\big)d\theta  \bigg).
\end{equation}

$\Gamma$ achieves a local minimum at $a_*$, because $\Gamma \to \infty$ as $a\to\infty$. We thus find that, after substituting $a = a_*$ into \eqref{eq: breve I alpha},
\begin{align*}
I_\alpha(\mu) = \frac{1}{2\pi}\int_{\mathbb{S}^1}\mathcal{C}(\alpha,\theta) d\theta - \exp\bigg(\frac{1}{2\pi}\int_{\mathbb{S}^1} \zeta(\theta) \log\bigg(\frac{C(\alpha,\theta)} {\zeta(\theta)}\bigg)d\theta  \bigg).%\times \bigg\lbrace \frac{1}{2\pi}\int_{\mathbb{S}^1} \zeta(\theta) \log\bigg(\frac{C(\alpha,\theta)} {\zeta(\theta)}\bigg)d\theta -1 \bigg\rbrace\\
\end{align*}
One can double check using Jensen's Inequality that $I_\alpha(\mu) \geq 0$.

\end{proof}
\begin{lemma} \label{Lemma U m limit}
Let $\lbrace j_n \rbrace_{n\geq 1}$ be any sequence such that there exists $p$ such that $\theta^{j_n}_n \in R_p$ for all $n\geq 1$, and also $\theta^{j_n}_n \to \alpha \in R_p$. Then, recalling the definition of $\mathcal{U}_m$ in \eqref{eq: U m definition},
\begin{equation}\label{eq: epsilon to zero U}
\lim_{m\to 0^+}\lim_{n\to\infty}\lbrace \rho_n (2n+1)\rbrace^{-1} \log \mathbb{P}\big( \tilde{\mu}^n_{j_n} \in \mathcal{U}_m \big) = \frac{1}{2\pi} \int_{\mathbb{S}^1}\mathcal{C}(\alpha,\theta)d\theta.
\end{equation}
\end{lemma}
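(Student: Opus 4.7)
The plan is to bound $\mathbb{P}(\tilde{\mu}^n_{j_n} \in \mathcal{U}_m)$ directly from above and below, exploiting the independence of the edges $\lbrace w^{j_n k}\rbrace_{k \in I_n}$. The key observation is that $\tilde{\mu}^n_{j_n}(w=1) = \lbrace\rho_n(2n+1)\rbrace^{-1}\sum_{k \in I_n} w^{j_n k}$, so the event $\lbrace\tilde{\mu}^n_{j_n} \in \mathcal{U}_m\rbrace$ coincides with $\lbrace\sum_k w^{j_n k} \leq m\rho_n(2n+1)\rbrace$. The central analytic input will be the Riemann-sum convergence $(2n+1)^{-1}\sum_k \mathcal{C}(\theta^{j_n}_n, \theta^k_n) \to (2\pi)^{-1}\int_{\mathbb{S}^1}\mathcal{C}(\alpha,\beta)\,d\beta$, which holds since $\lbrace \theta^k_n = 2\pi k/(2n+1)\rbrace$ is the uniform grid, $\theta^{j_n}_n\to\alpha$, and $\mathcal{C}$ is (uniformly) continuous on the compact set $\mathbb{S}^1\times\mathbb{S}^1$. (Note that the right-hand side of the stated identity appears to be missing a minus sign, since the log-probability is manifestly non-positive; the computation below yields $-(2\pi)^{-1}\int\mathcal{C}(\alpha,\beta)\,d\beta$.)

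For the lower bound, since $w^{j_n j_n} = 1$ deterministically, for $n$ large enough that $m\rho_n(2n+1) \geq 1$ the event $\lbrace\tilde{\mu}^n_{j_n}\in\mathcal{U}_m\rbrace$ contains the subevent $\lbrace w^{j_n k} = 0 \text{ for all } k \neq j_n\rbrace$. Independence gives
\[
\mathbb{P}(\tilde{\mu}^n_{j_n}\in\mathcal{U}_m) \;\geq\; \prod_{k\neq j_n}\bigl(1 - \rho_n \mathcal{C}(\theta^{j_n}_n,\theta^k_n)\bigr).
\]
Taking logarithms, applying $\log(1-x) = -x + O(x^2)$ uniformly (justified by $\rho_n\to 0$ and $\mathcal{C}$ bounded), dividing by $\rho_n(2n+1)$, and invoking Riemann-sum convergence yields $\linf{n}\lbrace\rho_n(2n+1)\rbrace^{-1}\log\mathbb{P}(\tilde{\mu}^n_{j_n}\in\mathcal{U}_m) \geq -(2\pi)^{-1}\int_{\mathbb{S}^1}\mathcal{C}(\alpha,\beta)\,d\beta$, uniformly in $m > 0$.

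For the upper bound, I would apply Chernoff's inequality with parameter $\lambda > 0$:
\[
\mathbb{P}(\tilde{\mu}^n_{j_n}\in\mathcal{U}_m) \;\leq\; e^{\lambda m\rho_n(2n+1) - \lambda}\prod_{k\neq j_n}\bigl[1 - \rho_n \mathcal{C}(\theta^{j_n}_n,\theta^k_n)(1-e^{-\lambda})\bigr].
\]
The same Taylor/Riemann argument (using $\lambda/(\rho_n(2n+1))\to 0$ since $\rho_n(2n+1)\to\infty$) produces
\[
\lsup{n}\lbrace\rho_n(2n+1)\rbrace^{-1}\log\mathbb{P}(\tilde{\mu}^n_{j_n}\in\mathcal{U}_m) \;\leq\; \lambda m - \frac{1-e^{-\lambda}}{2\pi}\int_{\mathbb{S}^1}\mathcal{C}(\alpha,\beta)\,d\beta
\]
for every $\lambda > 0$. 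Sending $m \to 0^+$ first eliminates the $\lambda m$ contribution, and then sending $\lambda \to \infty$ recovers the matching bound $-(2\pi)^{-1}\int_{\mathbb{S}^1}\mathcal{C}(\alpha,\beta)\,d\beta$, completing the identification of the double limit.

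I do not anticipate any serious obstacle: the two delicate steps are uniformity of the Taylor expansion in $n$ (controlled by $\rho_n \to 0$ together with the uniform bound on $\mathcal{C}$) and the Riemann-sum approximation (handled by uniform continuity of $\mathcal{C}$ together with $\theta^{j_n}_n \to \alpha$ within a single piece $R_p$). Both are essentially the same estimates already used in the proof of Lemma \ref{Lemma LDP big space} for the logarithmic moment generating function, and in Lemma \ref{Lemma Exponential Tightness} for the Chernoff step, so this result can be read as the specialization of those arguments to the degenerate event where almost no edges are present.
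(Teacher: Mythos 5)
Your proof is correct, and it takes a genuinely different and more elementary route than the paper. The paper deduces the result from the full large deviation principle for $\tilde{\mu}^n_{j_n}$ (Lemma \ref{Lemma LDP big space}): it applies the upper bound to the closed set $\mathcal{U}_m$ and the lower bound to the open set $\lbrace \nu : \nu(w=1)<m\rbrace$, and then evaluates $\lim_{m\to 0}\inf_{\nu\in\mathcal{U}_m}\tilde{I}_\alpha(\nu)$ by exhibiting a near-optimal $\nu_m$ with uniform density $m/(2\pi)$ and ruling out anything better via a contradiction argument using $x\log x\geq -Kx$. You instead work directly with the event $\lbrace\sum_k w^{j_n k}\leq m\rho_n(2n+1)\rbrace$: the all-zeros subevent gives a lower bound that is already sharp and uniform in $m$, and a lower-tail Chernoff bound with parameter $\lambda$, followed by $m\to 0^+$ and then $\lambda\to\infty$, gives the matching upper bound. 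This bypasses the rate-function variational computation entirely and is self-contained; the cost is that it does not reuse the machinery the paper has already built, and (like the paper's own argument) it identifies only $\lim_{m\to 0}\limsup_n$ and $\lim_{m\to 0}\liminf_n$ rather than proving the inner limit in $n$ exists for each fixed $m$ --- which is all that is used downstream, where the lemma is invoked with $\liminf_n$. Your observation about the missing minus sign on the right-hand side of \eqref{eq: epsilon to zero U} is also correct: the displayed identity is for a log-probability and the lemma is applied later in the form $-\frac{1}{2\pi}\int_{\mathbb{S}^1}\mathcal{C}(\alpha,\theta)\,d\theta$, consistent with what your computation produces. Your remark that $\theta^{j_n}_n$ and $\alpha$ lie in the same piece $R_p$ is the right way to handle the first argument of $\mathcal{C}$, since in this section $\mathcal{C}$ is only piecewise constant (not continuous) in that variable.
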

\begin{proof}
Since $\mathcal{U}_m$ is closed, by Lemma \ref{Lemma LDP big space},
\begin{equation}
\lsup{n}\lbrace \rho_n (2n+1)\rbrace^{-1} \log \mathbb{P}\big( \tilde{\mu}^n_{j_n} \in \mathcal{U}_m \big)\leq  -\inf_{\nu \in \mathcal{U}_m}\tilde{I}_\alpha(\nu)  % \frac{1}{2\pi} \int_{\mathbb{S}^1}\mathcal{C}(\alpha,\theta)d\theta
\end{equation}
Define $\mathcal{V}_m = \big\lbrace \nu \in \mathcal{M}^+(\mathbb{S}^1\times\mathcal{E}) : \nu(w=1) < m \big\rbrace$. Since $\mathcal{V}_m$ is open, by Lemma \ref{Lemma LDP big space},
\begin{equation}
\linf{n}\lbrace \rho_n (2n+1)\rbrace^{-1} \log \mathbb{P}\big( \tilde{\mu}^n_{j_n} \in \mathcal{V}_m \big)\geq  -\inf_{\nu \in \mathcal{V}_m}\tilde{I}_\alpha(\nu)  % \frac{1}{2\pi} \int_{\mathbb{S}^1}\mathcal{C}(\alpha,\theta)d\theta
\end{equation}
It follows from the previous two equations that it suffices for us to prove that
\begin{equation}
\lim_{m\to 0}\inf_{\nu \in \mathcal{U}_m}\tilde{I}_\alpha(\nu) = \frac{1}{2\pi}\int_{\mathbb{S}^1}\mathcal{C}(\alpha,\theta)d\theta.
\end{equation}
First, define $\nu_m \in \mathcal{U}_m$ to be such that $\nu(\cdot,1)$ has the uniform density $\frac{m}{2\pi}$. One easily checks that $\tilde{I}_\alpha(\nu_m) \to  \frac{1}{2\pi}\int_{\mathbb{S}^1}\mathcal{C}(\alpha,\theta)d\theta$ as $m\to 0$. We thus see that 
\begin{equation}\label{eq: upper bound tmp middle}
\lim_{m\to 0}\inf_{\nu \in \mathcal{U}_m}\tilde{I}_\alpha(\nu) \leq  \frac{1}{2\pi}\int_{\mathbb{S}^1}\mathcal{C}(\alpha,\theta)d\theta.
\end{equation}
Now suppose for a contradiction that $\nu^m \in \mathcal{U}_m$, and $\nu^m(\cdot,1)$ has a density $\zeta^m(\theta) / (2\pi)$, and that there exists $\epsilon , m_0 > 0$ such that for all $m < m_0$,
\begin{equation}
\tilde{I}_\alpha(\nu^m) \leq  \frac{1}{2\pi}\int_{\mathbb{S}^1}\mathcal{C}(\alpha,\theta)d\theta- \epsilon
\end{equation}
Thanks to \eqref{eq: upper bound tmp middle} and the definition of $\tilde{I}_\alpha$ in \eqref{eq: I alpha nu}, it must be that for all sufficiently large $m$,
\begin{equation}
 \frac{1}{2\pi}\int_{\mathbb{S}^1}\big\lbrace \zeta^m(\theta) \log\big(\frac{\zeta^m(\theta)}{C(\alpha,\theta)} \big) - \zeta^m(\theta) \big\rbrace d\theta  \leq -\epsilon.
\end{equation}
Our assumption that $\mathcal{C}$ is lower bounded implies that there exists $C_{lb} > 0$ such that
\[
\inf_{\theta,\alpha}\mathcal{C}(\alpha,\theta) \geq C_{lb} > 0.
\]
 Thus
\begin{equation}
 \frac{1}{2\pi}\int_{\mathbb{S}^1}\big\lbrace \zeta^m(\theta) \log\big(\zeta^m(\theta) \big) +\zeta^m(\theta)\big( -\log(C_{lb}) -1\big) \big\rbrace d\theta \leq -\epsilon,
\end{equation}
We thus find that
\begin{equation}
 \frac{1}{2\pi}\int_{\mathbb{S}^1}  \zeta^m(\theta) \log\big(\zeta^m(\theta) \big)   d\theta \leq m \big( \log(C_{lb})+1\big) - \epsilon .
\end{equation}
However there exists a positive constant $K$ such that
\[
x\log x \geq -Kx.
\]
Thus 
\begin{equation}
-K \frac{1}{2\pi}\int_{\mathbb{S}^1}  \zeta^m(\theta) d\theta \leq m \big( \log(C_{lb})+1\big) - \epsilon ,
\end{equation}
and therefore
\begin{equation}
-Km \leq m \big( \log(C_{lb})+1\big) - \epsilon .
\end{equation}
Taking $m\to 0$, we obtain a contradiction. We have thus proved the lemma.
\end{proof}
\section{Proof of Theorems \ref{Theorem Psi} and \ref{Theorem 1}}\label{Section Psi Definition}
To push-forward the initial conditions by the dynamics, we wish to find the coarsest topology possible that will ensure that the dynamics depends continuously on the initial condition. The weakest such topology that we could find is $\tilde{\mathcal{T}}$, which we will now specify precisely. We will first define a topology $\mathcal{T}_m$ that `knows' about the distribution of the states $\lbrace u^k_* \rbrace$ that can be reached through no more than $m$ connections. $\mathcal{T}_0$ is the standard weak topology over $\mathcal{P}(\mathcal{D})$. $\tilde{\mathcal{T}}$ will be defined to be the smallest (coarsest) topology containing $\bigcup_{m\geq 1}\mathcal{T}_m$. The basic reason why this topology will be sufficient for the map $\Psi$ to be continuous is that one knows that the push-forward of the original dynamics can be approximated arbitrarily well by employing $m$ Euler-step approximations, for large enough $m$. However $m$ Euler-steps only requires knowing the distribution of the states within $m$ edge connections of each vertex. In Section \ref{Section Topological Specification} we specify the topology $\tilde{\mathcal{T}}$, and prove the essential properties of compactness and separability. In Section \ref{Section Push Forward Map} we specify the push-forward map $\Psi$, and prove that it is continuous when $\mathcal{P}(\mathcal{D}\times\mathcal{P}(\mathcal{D}))$
is equipped with the $\tilde{\mathcal{T}}$ topology.

\subsection{Specification of the $\tilde{\mathcal{T}}$ topology on $\mathcal{P}(\mathcal{D}\times\mathcal{P}(\mathcal{D}))$} \label{Section Topological Specification}
The topology $\tilde{\mathcal{T}}$ is, by definition, the weakest topology generated by subtopologies $\lbrace \mathcal{T}_m \rbrace_{m\geq 1}$. $\mathcal{T}_m$ is generated by a map $\Phi_m$ on $\mathcal{P}(\mathcal{D}\times\mathcal{P}(\mathcal{D}))$ (to be defined below); that is, it is the weakest possible topology such that $\Phi_m$ is continuous. We then prove that the topology $\mathcal{T}_m$ is Hausdorff, separable and compact.

Recalling that $\mathcal{D}$ is a compact subset of $\mathbb{R}^d$, let $\mathcal{V}_0= \mathcal{P}(\mathcal{D})$, and for $m\geq 1$, let $\mathcal{V}_m$ be the space
\begin{equation}
\mathcal{V}_m = \mathcal{P}(\mathcal{D}\times \mathcal{V}_{m-1}).
\end{equation}
Notice that elements of $\mathcal{V}_m$ are probability measures over the space $ \mathcal{D}\times \mathcal{V}_{m-1}$. We define a Wasserstein metric on $\mathcal{V}_m$ to metrize the convergence, as follows. Define $d_0$ to be the Wasserstein metric on $\mathcal{V}_0 := \mathcal{P}(\mathcal{D})$ induced by the norm $\norm{\cdot}$ on $\mathcal{D}\subset \mathbb{R}^d$. Then define $d_1$ to be the Wasserstein metric on $\mathcal{V}_1$ induced by the metric $\norm{\cdot} + d_0(\cdot,\cdot)$ on $\mathcal{D}\times \mathcal{P}(\mathcal{D})$. Continuing in this fashion, we let $d_k$ be the Wasserstein metric on $\mathcal{V}_k$ induced by the metric $\norm{\cdot} + d_{k-1}(\cdot,\cdot)$ on $\mathcal{D}\times \mathcal{V}_{k-1}$. The weak topology $\mathcal{T}_w$ on $\mathcal{V}_m$ is that generated by the Wasserstein metric $d_m$.

It follows from Prokhorov's Theorem that $\mathcal{V}_m$ is compact as long as $\mathcal{V}_{m-1}$ is compact. Since $\mathcal{V}_0$ is compact, the following lemma must be true.
\begin{lemma}\label{Lemma V m compact}
For each $m \in \mathbb{Z}^+$, $\mathcal{V}_m$ is a compact Polish Space.
\end{lemma}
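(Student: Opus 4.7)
The plan is to prove the lemma by induction on $m$, leveraging the fact that $\mathcal{P}(X)$ inherits the compact-Polish property from a compact Polish $X$ via Prokhorov's Theorem, combined with the fact that on a bounded complete separable metric space the Wasserstein metric metrizes the weak topology.

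For the base case $m=0$, note that $\mathcal{D} \subset \mathbb{R}^d$ is a closed ball and hence a compact Polish space under $\norm{\cdot}$. Standard results (see e.g.\ \cite{Billingsley1999}) give that $\mathcal{P}(\mathcal{D})$ equipped with the weak topology $\mathcal{T}_w$ is compact (this is the easy direction of Prokhorov's Theorem, since every family of probability measures on a compact space is automatically tight) and Polish. Since $\mathcal{D}$ has bounded diameter, $d_0$ metrizes $\mathcal{T}_w$ on $\mathcal{V}_0$, so $\mathcal{V}_0$ is a compact Polish space with the metric $d_0$.

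For the inductive step, suppose $\mathcal{V}_{m-1}$ is compact and Polish under $d_{m-1}$. Then the product $\mathcal{D}\times \mathcal{V}_{m-1}$, endowed with the sum metric $\norm{\cdot}+d_{m-1}(\cdot,\cdot)$, is again compact (Tychonoff) and Polish (countable products preserve separability and completeness). Applying Prokhorov's Theorem once more, $\mathcal{V}_m := \mathcal{P}(\mathcal{D}\times \mathcal{V}_{m-1})$ is compact in the weak topology and Polish. Because $\mathcal{D}\times\mathcal{V}_{m-1}$ has bounded diameter under $\norm{\cdot}+d_{m-1}$, the induced Wasserstein metric $d_m$ metrizes the weak topology on $\mathcal{V}_m$, closing the induction.

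There is no real obstacle here: the argument is essentially just the iterated application of a well-known fact (``$\mathcal{P}$ of a compact Polish space is compact Polish''). The only place where one must be a little careful is in verifying at each step that $d_m$ actually metrizes $\mathcal{T}_w$ on $\mathcal{V}_m$; this uses boundedness of the base metric, which is automatic because $\mathcal{D}$ is compact and, by induction, $(\mathcal{V}_{m-1},d_{m-1})$ has diameter at most $\mathrm{diam}(\mathcal{D}) + \mathrm{diam}(\mathcal{V}_{m-1},d_{m-1})<\infty$. Once this is checked, compactness and the Polish property propagate together through the recursion.
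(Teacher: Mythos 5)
Your proposal is correct and takes essentially the same route as the paper, which likewise obtains the result by induction: compactness of $\mathcal{V}_0=\mathcal{P}(\mathcal{D})$ as the base case and Prokhorov's Theorem applied to the compact Polish space $\mathcal{D}\times\mathcal{V}_{m-1}$ for the inductive step. Your additional check that the Wasserstein metric $d_m$ metrizes the weak topology on each $\mathcal{V}_m$ (using boundedness of the base metric) is a worthwhile detail that the paper leaves implicit.
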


We wish to define the map $\Phi_m : \mathcal{P}\big(\mathcal{D}\times\mathcal{P}(\mathcal{D})\big) \to \mathcal{V}_{m}$, that will, in turn, specify the topology. Before we do this, we must first define $\Lambda: \mathcal{P}\big(\mathcal{D}\times\mathcal{P}(\mathcal{D})\big) \times \mathcal{D} \to \mathcal{P}\big( \mathcal{P}(\mathcal{D})\big)$ to be such that, for any $B\in \mathcal{B}\big(\mathcal{P}(\mathcal{D})\big)$,
\begin{equation}\label{eq: Lambda mu x definition}
\Lambda_{\mu,x}(B) = \left\lbrace \begin{array}{c}
 \lim_{n \to \infty}\frac{\mathbb{E}^{\mu(y,\beta)}[\chi\lbrace \norm{y-x} \leq n^{-1} \text{ and }\beta \in B \rbrace ]}{\mathbb{E}^{\mu(y,\beta)}[\chi\lbrace \norm{y-x} \leq n^{-1} \rbrace]} \text{ in the case that the limit exists, else }\\
  \chi\lbrace \kappa \in B\rbrace\text{ otherwise,}
\end{array}\right. 
\end{equation}
for some fixed $\kappa \in \mathcal{P}(\mathcal{D})$ (it does not matter how we choose $\kappa$). It follows from Levy's Downwards Theorem that for $\mu$ almost every $x$, the above limit $n \to 0$ exists, and $\Lambda_{\mu,x}$ is a regular conditional probability distribution \cite{Shiryaev2016}. However we emphasize that $\Lambda_{\mu,x}$ is a precisely defined probability measure for every $x\in \mathcal{D}$, not just for $\mu$ almost every $x$. 

Next define $\Phi_m: \mathcal{P}\big(\mathcal{D}\times\mathcal{P}(\mathcal{D})\big) \mapsto \mathcal{V}_{m}$, using the following inductive procedure. $\Phi_1$ is just the identity map. Lets assume that $\Phi_{m-1}\cdot \mu$ is well-defined: it is a probability measure over the space $\mathcal{D}\times \mathcal{V}_{m-1}$. Define $\Phi_m \cdot \mu := \nu \in \mathcal{V}_m$, where $\nu$ is such that for any continuous function $g$ in $\mathcal{C}(\mathcal{D} \times \mathcal{V}_{m-1})$, and any continuous function $h$ in $\mathcal{C}( \mathcal{P}(\mathcal{D}))$,
\begin{equation}
\mathbb{E}^{\nu}[ gh] := \mathbb{E}^{\Phi_{m-1}\cdot \mu}\big[ g(x) \mathbb{E}^{\Lambda_{\mu,x}}[h] \big].
\end{equation}
One easily checks that $\nu$ is uniquely well-defined. It thus follows from the principle of induction that we have established the following lemma.
\begin{lemma}
$\Phi_m \cdot\mu$ is uniquely well-defined for all $m\in \mathbb{Z}^+$ and all $\mu \in \mathcal{P}(\mathcal{D}\times\mathcal{P}(\mathcal{D}))$.
\end{lemma}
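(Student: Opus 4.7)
I would prove this by induction on $m$, following the skeleton indicated by the construction. The base case $m = 1$ is trivial, since $\Phi_1$ is declared to be the identity map on $\mathcal{P}(\mathcal{D} \times \mathcal{P}(\mathcal{D})) = \mathcal{V}_1$, so $\Phi_1 \cdot \mu = \mu$ is unambiguously an element of $\mathcal{V}_1$.

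For the inductive step, assume $\Phi_{m-1}\cdot \mu \in \mathcal{V}_{m-1}$ is uniquely specified, and examine the prescription
\[
\mathbb{E}^{\nu}[gh] = \mathbb{E}^{\Phi_{m-1}\cdot\mu}\bigl[g(x)\,\mathbb{E}^{\Lambda_{\mu,x}}[h]\bigr].
\]
I would verify three points in sequence. First, for every $x \in \mathcal{D}$ (not just $\mu$-a.e.\ $x$), $\Lambda_{\mu,x}$ is a bona fide probability measure on $\mathcal{P}(\mathcal{D})$; this is built into \eqref{eq: Lambda mu x definition}, which either takes a limit that exists (and is countably additive by L\'evy's downward theorem, since each finite-$n$ ratio is itself a probability measure on $\mathcal{P}(\mathcal{D})$), or falls back to the Dirac mass $\delta_\kappa$. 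Second, for each $h \in \mathcal{C}_b(\mathcal{P}(\mathcal{D}))$, the map $x \mapsto \mathbb{E}^{\Lambda_{\mu,x}}[h]$ is Borel measurable: each finite-$n$ ratio in \eqref{eq: Lambda mu x definition} is a Borel function of $x$ (being the ratio of expectations of continuous functions of the auxiliary variables, with $x$ entering only through the continuous indicator $\chi\{\|y-x\|\le n^{-1}\}$), and pointwise limits of Borel functions are Borel. Third, the right-hand side defines a bounded, positive, normalized linear functional on product test functions; extending by linearity and uniform approximation via Stone--Weierstrass on the compact Hausdorff space $\mathcal{D} \times \mathcal{V}_{m-1}$ (compact by Lemma \ref{Lemma V m compact}), the Riesz representation theorem yields a unique Borel probability measure $\nu \in \mathcal{V}_m$. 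Uniqueness at the level of product tests lifts to uniqueness of $\nu$ because the algebra they generate is dense in $\mathcal{C}(\mathcal{D}\times \mathcal{V}_{m-1})$.

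The only genuinely subtle point is measurability of $x \mapsto \Lambda_{\mu,x}$. The classical existence theory of regular conditional probabilities produces a measurable kernel only modulo a $\mu$-null set; the explicit pointwise convention in \eqref{eq: Lambda mu x definition} (fixing the value to $\delta_\kappa$ off the ``good'' set where the L\'evy limit exists) is what promotes this to an everywhere-defined Borel kernel, since the exceptional set is itself Borel and the default value is constant. Beyond this bookkeeping the argument reduces to a routine check, which presumably motivates the paper's ``one easily checks.''
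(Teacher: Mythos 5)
Your proof is correct and follows the same inductive route as the paper, whose own justification for this lemma is only ``one easily checks that $\nu$ is uniquely well-defined''; your verification that $\Lambda_{\mu,x}$ is an everywhere-defined Borel kernel and the Riesz/Stone--Weierstrass argument for the existence and uniqueness of $\nu$ are precisely the details being waved at. One small imprecision: the indicator $\chi\lbrace \norm{y-x}\le n^{-1}\rbrace$ is not continuous in $x$ (only upper semicontinuous), but joint Borel measurability of $(x,y)\mapsto\chi\lbrace\norm{y-x}\le n^{-1}\rbrace$ together with Tonelli still gives Borel measurability of each finite-$n$ ratio, so your conclusion stands.
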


% First define the map $\Lambda^0_\mu: \mathcal{P}(\mathcal{D}) \to \mathcal{P}(\mathcal{D}\times\mathcal{P}(\mathcal{D}))$ to be
%\begin{align}
%\Lambda^0_\mu(\beta) &:= \alpha \text{ where }\\
%\alpha(A\times B) &= \int_{A} \Lambda^0_{\mu,x}(B) d\beta(x) \text{ for any }A \in \mathcal{B}(\mathcal{D}) \text{ and }B \in \mathcal{B}(\mathcal{P}(\mathcal{D})).
%\end{align}
%Next, for $m \geq 1$, define $\Lambda^m_\mu :\mathcal{V}_{m}  \to \mathcal{V}_{m+1}$,
% \begin{align}
%\Lambda^m_\mu(\beta) &:= \alpha \text{ where }\\
%\alpha(A \times B_{m}) &= \int \chi\lbrace x\in A \text{ and } \Lambda^{m-1}_\mu(\gamma) \in B_{m} \rbrace  d\beta(x,\gamma)
%\end{align}
%for any $A\in\mathcal{B}\big(\mathbb{R}^d\big)$, $B_m \in \mathcal{B}\big(\mathcal{V}_m \big)$. Now define $\Phi_{m} \cdot \mu$ as follows. $\Phi_1 \cdot \mu = \mu$. Writing $\mu$ to be the law of random variables $(x,\beta) \in \mathcal{D}\times\mathcal{P}(\mathcal{D})$, for $m\geq 2$ define $\Phi_m \cdot \mu$ to be the law of the random variables $(x, \hat{\beta})$, where
%\begin{align}
%\hat{\beta} = \Lambda^{m-2}_\mu( \Lambda^{m-3}_\mu(\ldots \Lambda^0_\mu(\beta) )) \in \mathcal{V}_{m-1}.
%\end{align}
Define $\mathcal{T}_m$ to be the topology on $ \mathcal{P}\big(\mathcal{D}\times\mathcal{P}(\mathcal{D})\big)$ that is generated by open sets of the form
\begin{equation}
\big\lbrace \mu : \Phi_m \cdot \mu \in \mathcal{O}\big\rbrace,
\end{equation}
for some $\mathcal{O} \subset \mathcal{V}_{m}$ that is open with respect to the weak topology $\mathcal{T}_w$ on $\mathcal{V}_{m}$. Since $\Phi_m: \big(\mathcal{P}(\mathcal{D}\times\mathcal{P}(\mathcal{D})) , \mathcal{T}_w \big) \to (\mathcal{V}_m, \mathcal{T}_w)$ is not continuous, $\mathcal{T}_m$ is a strict refinement of $\mathcal{T}_w$ (notice also that $\mathcal{T}_1$ is the standard weak topology on $ \mathcal{P}\big(\mathcal{D}\times\mathcal{P}(\mathcal{D})\big)$). Let $\tilde{\mathcal{T}}$ be the coarsest topology on $ \mathcal{P}\big(\mathcal{D}\times\mathcal{P}(\mathcal{D})\big)$ containing
\[
\bigcup_{i\in \mathbb{Z}^+} \mathcal{T}_i.
\]
We next prove some essential properties of the topology $\tilde{\mathcal{T}}$.
\begin{lemma}\label{Lemma T m compact}
 $\big(  \mathcal{P}(\mathcal{D}\times\mathcal{P}(\mathcal{D})), \mathcal{T}_m\big)$ is separable and compact.
 \end{lemma}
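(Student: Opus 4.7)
The plan is to use that $\mathcal{T}_m$ is, by construction, the initial (pullback) topology on $\mathcal{P}(\mathcal{D}\times\mathcal{P}(\mathcal{D}))$ obtained from $\Phi_m$ and the weak topology on the compact Polish space $\mathcal{V}_m$ (Lemma \ref{Lemma V m compact}). Since preimages commute with unions and intersections, every $\mathcal{T}_m$-open set has the form $\Phi_m^{-1}(V)$ for some weak-open $V \subseteq \mathcal{V}_m$; equivalently, $\mathcal{T}_m$ is generated by the pseudometric $\rho_m(\mu,\mu') := d_m(\Phi_m \mu, \Phi_m \mu')$. Both separability and compactness will then be inherited from $\mathcal{V}_m$ via this representation.

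For separability, the subspace $\Phi_m\big(\mathcal{P}(\mathcal{D}\times\mathcal{P}(\mathcal{D}))\big) \subseteq \mathcal{V}_m$ is separable as a subset of a separable metric space. Picking a countable $d_m$-dense sequence $\lbrace \nu_k \rbrace$ in this image and lifting to any $\mu_k \in \Phi_m^{-1}(\nu_k)$ yields a countable $\mathcal{T}_m$-dense family: any nonempty $\Phi_m^{-1}(V)$ forces $V$ to meet the image, hence to contain some $\nu_k$, and therefore $\mu_k$.

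For compactness, a $\mathcal{T}_m$-open cover $\lbrace \Phi_m^{-1}(V_\alpha) \rbrace$ of $\mathcal{P}(\mathcal{D}\times\mathcal{P}(\mathcal{D}))$ induces a weak-open cover $\lbrace V_\alpha \rbrace$ of $\Phi_m\big(\mathcal{P}(\mathcal{D}\times\mathcal{P}(\mathcal{D}))\big)$, and a finite subcover in $\mathcal{V}_m$ pulls back. So it suffices to prove the image is compact in $\mathcal{V}_m$, and since $\mathcal{V}_m$ is compact Hausdorff this reduces to showing the image is closed. I would prove closedness by induction on $m$: for $m=1$, $\Phi_1$ is the identity and the image is all of $\mathcal{V}_1$. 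For the inductive step, take $\nu$ in the closure and $\mu^n$ with $\Phi_m \mu^n \to \nu$ in $d_m$; by weak compactness of $\mathcal{V}_1 = \mathcal{P}(\mathcal{D}\times\mathcal{P}(\mathcal{D}))$, extract a subsequence converging weakly to some $\mu^\infty$; then match $\Phi_m \mu^\infty = \nu$ by first applying the induction hypothesis to the natural projection $\mathcal{V}_m \to \mathcal{V}_{m-1}$ (handling the outer-layer component), and then using the Levy-style definition of the conditional measure $\Lambda_{\mu,x}$ in \eqref{eq: Lambda mu x definition} to identify the inner disintegration of $\nu$ with $\Lambda_{\mu^\infty,x}$, testing against a countable dense family of continuous functions.

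The hard part is this closedness step, because $\Phi_m$ is not weakly continuous in $\mu$: the conditional measures $\Lambda_{\mu,x}$ can fail to depend continuously on $\mu$ and do not automatically pass to weak limits. What should rescue the argument is that we only need to identify two probability measures in $\mathcal{V}_m$, which is an integral equation testable on a countable dense family of bounded continuous test functions, together with the explicit pointwise $n\to\infty$ construction of $\Lambda_{\mu,x}$ in \eqref{eq: Lambda mu x definition}, which provides enough rigidity (via Levy's downward theorem for almost every $x$) to pass to the limit along the chosen subsequence.
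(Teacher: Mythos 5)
Your treatment of separability is correct, and it is more elementary than the paper's: the paper instead exhibits the explicit countable family $\Upsilon_1$ of rational empirical measures \eqref{eq: Upsilon 1} and shows by a weighted-atom construction that $\Phi_m\cdot\Upsilon_1$ is dense in $\mathcal{V}_m$ (statement \eqref{eq: to show Phi m}); that specific family is needed later to define the push-forward $\Psi$, but for bare separability your ``lift a countable dense subset of the image'' argument suffices. Your identification of $\mathcal{T}_m$ as the initial topology induced by $\Phi_m$, so that every $\mathcal{T}_m$-open set is a preimage $\Phi_m^{-1}(V)$ and compactness of $\big(\mathcal{P}(\mathcal{D}\times\mathcal{P}(\mathcal{D})),\mathcal{T}_m\big)$ is equivalent to compactness of $\Phi_m\big(\mathcal{P}(\mathcal{D}\times\mathcal{P}(\mathcal{D}))\big)$ in $\mathcal{V}_m$, is also correct.

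The gap is exactly where you flag it, and it cannot be repaired along the lines you sketch, because the image of $\Phi_m$ is \emph{not} closed in $\mathcal{V}_m$ for $m\geq 2$. Take $m=2$, distinct $p,q\in\mathcal{D}$, set $\beta_1=\delta_p$, $\beta_2=\delta_q\in\mathcal{P}(\mathcal{D})$, and let $a_n\neq b_n$ with $a_n,b_n\to c$. Put $\mu^n=\frac{1}{2}\delta_{(a_n,\beta_1)}+\frac{1}{2}\delta_{(b_n,\beta_2)}$. Since the two atoms are spatially separated, \eqref{eq: Lambda mu x definition} gives $\Lambda_{\mu^n,a_n}=\delta_{\beta_1}$ and $\Lambda_{\mu^n,b_n}=\delta_{\beta_2}$, so $\Phi_2\cdot\mu^n$ converges weakly in $\mathcal{V}_2$ to the ``diagonal'' measure $\nu_\infty$ putting mass $\frac{1}{2}$ on each of the two configurations in which the conditional law agrees with the attached measure $\beta_i$. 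But any $\tilde\mu$ with $\Phi_2\cdot\tilde\mu=\nu_\infty$ must have first-level marginal $\tilde\mu=\frac{1}{2}\delta_{(c,\beta_1)}+\frac{1}{2}\delta_{(c,\beta_2)}$, which forces $\Lambda_{\tilde\mu,c}=\frac{1}{2}\delta_{\beta_1}+\frac{1}{2}\delta_{\beta_2}$ and hence a product measure (mass $\frac{1}{4}$ on each of four configurations), not $\nu_\infty$. So $\nu_\infty$ lies in the closure of the image but not in the image. This is precisely the failure mode you anticipate --- $\Lambda_{\mu,x}$ does not pass to weak limits --- and no testing against countable families of test functions can restore a disintegration that the limit measure simply does not possess. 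Your subsequence argument therefore stalls at the identification $\Phi_m\cdot\mu^\infty=\nu$, which is false in general.

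For comparison, the paper's compactness proof takes a different route: from \eqref{eq: to show Phi m} it asserts that any $\mathcal{T}_m$-open cover $\lbrace\Phi_m^{-1}(\mathcal{O}_i)\rbrace$ yields $\mathcal{V}_m=\bigcup_i\mathcal{O}_i$ and then invokes Lemma \ref{Lemma V m compact}. Your framing shows that this step needs more than density of $\Phi_m\cdot\Upsilon_1$ (an open set containing a dense set need not be the whole space): what is really required is compactness, equivalently closedness, of the image --- the very property at issue above. So while your separability proof stands, the compactness half is genuinely incomplete, and the obstruction you identified is substantive rather than technical.
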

 \begin{proof}
We prove that the following set of empirical measures is a separable base for  $\big(  \mathcal{P}(\mathcal{D}\times\mathcal{P}(\mathcal{D})), \mathcal{T}_m\big)$,
\begin{equation}\label{eq: Upsilon 1}
\Upsilon_1 = \bigg\lbrace \hat{\mu}^q = q^{-1}\sum_{j=1}^q\delta_{(y^j , \hat{\mu}^q_j)} \; : \; \hat{\mu}^q_j = \frac{1}{\Xi_j}\sum_{k\in \Xi_j}\delta_{y^k} \text{ for some }q\in\mathbb{Z}^+ \; , \lbrace y^j \rbrace_{j=1}^q \subset \mathcal{D}\cap \mathbb{Q}^d , \; \Xi_j \subset I_q\bigg\rbrace.
\end{equation}
% One easily checks that $\hat{\Upsilon}$ is a base for $\mathcal{T}_1$. To prove the lemma, we use an inductive argument. We assume that for any $m\geq 2$, $\hat{\Upsilon} $ is a base for $\big(  \mathcal{P}(\mathcal{D}\times\mathcal{P}(\mathcal{D})), \mathcal{T}_{m-1}\big)$. By induction,
Clearly $\Upsilon_1$ is countable. It suffices to show that
\begin{equation}\label{eq: to show Phi m}
\Phi_m \cdot \Upsilon_1 \text{ is dense in }\mathcal{V}_m, \text{ with respect to the weak topology.}
\end{equation}
Define $\lbrace \Upsilon_n \rbrace_{n=0}^m$ inductively as follows
\begin{align*}
\Upsilon_0 =& \big\lbrace \mu \in \mathcal{V}_0: \mu=  q^{-1}\sum_{1\leq j \leq q}\delta_{y^j} \in \mathcal{P}(\mathcal{D}) \text{ for some }q\in \mathbb{Z}^+ \; , \lbrace y^j \rbrace_{1\leq j \leq q}  \subset \mathcal{D}\cap \mathbb{Q}^d \big\rbrace \\
\Upsilon_n =& \big\lbrace \mu \in \mathcal{V}_n : \mu =  q^{-1}\sum_{j=1}^q\delta_{(y^j, \nu_j)} \text{ for some }q\in \mathbb{Z}^+ \; ,   \lbrace y^j \rbrace_{1\leq j \leq q}  \subset \mathcal{D}\cap \mathbb{Q}^d \; , \lbrace \nu_j \rbrace_{1\leq j \leq q} \subset \Upsilon_{n-1} \big\rbrace, \text{ for }n\geq 1.
\end{align*}
We easily check that $\Upsilon_m$ is a countable base for $\mathcal{V}_m$. Thus in order that $\big(  \mathcal{P}(\mathcal{D}\times\mathcal{P}(\mathcal{D})), \mathcal{T}_m\big)$ is separable, it suffices for us to prove that for any fixed $\nu \in \Upsilon_m$, there exists a sequence $\lbrace \mu^p \rbrace_{p\in\mathbb{Z}^+} \subset \Upsilon_1$ such that
\begin{equation}\label{eq: to establish Psi m mu p}
\Psi_m\cdot \mu^p \to \nu \text{ as }p\to\infty.
\end{equation}
By definition, we must be able to represent $\nu$ in the following way. There are measures $\lbrace \nu^n_j \rbrace_{0\leq n \leq m-1 , j\in  \hat{\mathcal{I}}^n}$, with $\nu^n_j \in \Upsilon_n$, finite index sets $\lbrace \mathcal{I}^n_k \rbrace_{0\leq n \leq m-1 , j\in  \mathbb{Z}^+} $, and $\lbrace y^n_j \rbrace \subset \mathcal{D}\cap \mathbb{Q}^d$ such that
\begin{align}
\nu &= \big| \mathcal{I}^{m}_1 \big|^{-1}\sum_{j\in \mathcal{I}^{m}_1} \delta_{(y^{m}_j , \nu^{m-1}_j)} \label{eq: nu topology T 1}\\
\nu^{n}_k &=\big| \mathcal{I}^{n}_{k} \big|^{-1}\sum_{j\in \mathcal{I}^{n}_{k}} \delta_{(y^{n}_j , \nu^{n-1}_j)} \text{ for each }1\leq n \leq m-1 \text{ and }k\in \hat{\mathcal{I}}^n \\
\nu^0_k &=\big| \mathcal{I}^{0}_{k} \big|^{-1}\sum_{j\in  \mathcal{I}^0_k}\delta_{y^0_j} \in \mathcal{P}(\mathcal{D}). \label{eq: nu topology T 3}
\end{align}
Now for each $p \in \mathbb{Z}^+$ let $\epsilon_p > 0$ be a small constant, such that $\epsilon_p \to 0$ as $p\to \infty$. Define $\lbrace \tilde{y}^n_j \rbrace \subset \mathcal{D}\cap \mathbb{Q}^d$ to be any numbers such that for all indices $0\leq n \leq m$ and $j \in \mathcal{I}^n_k$,
\begin{align}
 \tilde{y}^n_j &\neq \tilde{y}^q_k \text{ if either }n \neq q \text{ and / or }j \neq k \\
| \tilde{y}^n_j - y^n_j | &\leq \epsilon_p.
\end{align}
The fact that $\mathbb{Q}^d$ is a separable base for $\mathbb{R}^d$ implies that such a definition is always possible. Next define $\big\lbrace \kappa^p_n \big\rbrace_{n=0}^m$ to be numbers such that
\begin{align}
\kappa^p_0 &= 1 \text{ and }
\kappa^p_n = p\sum_{l=0}^{n-1}\kappa^p_l, \text{ and define }\mu^p \in \mathcal{P}(\mathcal{D}\times\mathcal{P}(\mathcal{D})) \text{ to be such that }\label{eq: kappa mass}\\
\mu^p &= \big(| \mathcal{I}^{m}_{1} |+ \sum_{n=0}^{m-1}\sum_{k\in \hat{\mathcal{I}}^n}\big|  \mathcal{I}^{n}_{k} \big| \kappa^p_n \big)^{-1}\bigg( \sum_{n=1}^{m-1} \sum_{k\in \hat{\mathcal{I}}_n} \sum_{j\in \mathcal{I}^{n}_{k}}\kappa^p_n \delta_{(\tilde{y}^{n}_j , \tilde{\nu}^{n-1}_j)} +\sum_{k\in \hat{\mathcal{I}}^0} \sum_{j\in \mathcal{I}^{0}_{k}}\kappa^p_0 \delta_{(\tilde{y}^{0}_j )}+ \sum_{j\in \mathcal{I}^{m}_1}\kappa^p_m \delta_{(\tilde{y}^{m}_j , \tilde{\nu}^{m-1}_j)} \bigg)\label{eq: kappa mass 2} \\%\text{ where } \\
\tilde{\nu}^{n}_j &=\big| \mathcal{I}^{n}_{j} \big|^{-1}\sum_{k\in \mathcal{I}^{n}_{j}} \delta_{\tilde{y}^{n}_k} \in \mathcal{P}(\mathcal{D}).\label{eq: kappa mass 3}
\end{align}
Notice that $\mu^p \in \Upsilon_1$: instead of introducing the factors of $\kappa^p_n$ in front of the Dirac measures, we could have alternatively just introduced $\kappa^p_n$ copies of the Diract measure. Define $\hat{\nu}^{(p)} \in \mathcal{V}_m$ to have the same definition as $\nu$ in \eqref{eq: nu topology T 1}-\eqref{eq: nu topology T 3}, except that $y^n_j$ is replaced by $\tilde{y}^n_j$ for each set of indices $(n,j)$. Since $\epsilon_p \to 0$, it is not hard to see that
\begin{equation}
\hat{\nu}^{(p)} \to \nu \text{ as }p\to\infty.
\end{equation}
%To do this, we proceed inductively. Write $\hat{\nu}^{(p)}_n \in \mathcal{V}_n$ to be the marginal of $\hat{\nu}^{(p)}$ over $\mathcal{D} \times \mathcal{V}_{n-1}$. Suppose that, for $1\leq n \leq m-1$,
%\begin{equation} 
%\lim_{p\to\infty}d_n\big(\Phi_n\cdot \mu^p, \hat{\nu}_n^{(p)} \big) \to 0.
%\end{equation}
%Noting that the above result is trivially true when $n=1$, we must prove that 
%\begin{equation} 
%\lim_{p\to\infty}d_{n+1}\big(\Phi_{n+1}\cdot \mu^p , \hat{\nu}_n^{(p)} \big) \to 0.
%\end{equation}
Notice that, following the definition in \eqref{eq: Lambda mu x definition},
\begin{equation}
\Lambda_{\mu^p,\tilde{y}^{n}_j} = \delta_{\tilde{\nu}^{n-1}_j}. 
\end{equation}
Lets first understand $\lim_{p\to\infty}\Phi_2 \cdot \mu^p \in \mathcal{V}_2$. It follows from the definition in \eqref{eq: kappa mass 2} that, as $p\to\infty$, since $\kappa^p_m \gg \sum_{n=0}^{m-1}\kappa^p_n$, the Dirac measures with coefficient $\kappa^p_m$ dominates those with a coefficient in $\big\lbrace \kappa^p_r \big\rbrace_{0 \leq r \leq m-1}$. This means that, since $\tilde{y}^n_j \to y^n_j$ uniformly, 
\begin{align}
\Phi_2 \cdot \mu^p &\to  \big| \mathcal{I}^m_1 \big|^{-1}\sum_{j\in \mathcal{I}^{m}_{1}} \delta_{(y^{m}_j , \breve{\nu}^{m-1}_j)} \text{ where }\\
\breve{\nu}^{m-1}_j &= \big| \mathcal{I}^{m-1}_{j} \big|^{-1}\sum_{k\in   \mathcal{I}^{m-1}_{j} }\delta_{(y^{m-1}_k , \breve{\nu}^{m-2}_k)} \text{ and }\\
  \breve{\nu}^{m-2}_k &=  \big| \mathcal{I}^{m-2}_{k} \big|^{-1}\sum_{l\in   \mathcal{I}^{m-2}_{k} }\delta_{y^{m-2}_l }.
\end{align}
Continuing this reasoning, we find that for any $m\geq 1$,
\begin{equation}
\Phi_m \cdot \mu^p = \nu,
\end{equation}
which implies \eqref{eq: to establish Psi m mu p}, as required. We have thus established  that $\big(  \mathcal{P}(\mathcal{D}\times\mathcal{P}(\mathcal{D})), \mathcal{T}_m\big)$ is separable.\\

It remains to prove that  $\big(  \mathcal{P}(\mathcal{D}\times\mathcal{P}(\mathcal{D})), \mathcal{T}_m\big)$ is compact. To demonstrate the compactness of $ \mathcal{P}(\mathcal{D}\times\mathcal{P}(\mathcal{D}))$ with respect to $\mathcal{T}_m$, suppose that for some index set $\mathcal{I}$,
\begin{equation}
\mathcal{P}(\mathcal{D}\times\mathcal{P}(\mathcal{D})) = \bigcup_{i \in \mathcal{I}} O_i,
\end{equation}
where $O_i = \Phi_m^{-1}(\mathcal{O}_i)$, and $\mathcal{O}_i$ is open in $\mathcal{V}_m$ with respect to the weak topology. It then follows from \eqref{eq: to show Phi m} that
\[
\mathcal{V}_m = \bigcup_{i\in \mathcal{I}}\mathcal{O}_i.
\]
We established in Lemma \ref{Lemma V m compact} that $\mathcal{V}_m$ is compact. Thus there exists a finite subset of $\mathcal{I}$, written $\lbrace i_p \rbrace_{p=1}^M$, such that
\[
\mathcal{V}_m = \bigcup_{p=1}^M \mathcal{O}_{i_p}.
\]
This means that
\[
\mathcal{P}\big(\mathcal{D}\times\mathcal{P}(\mathcal{D})\big) = \bigcup_{p=1}^M O_{i_p},
\]
and we have thus demonstrated the compactness of $\mathcal{P}\big(\mathcal{D}\times\mathcal{P}(\mathcal{D})\big) $ with respect to $\mathcal{T}_m$.
 \end{proof}
 Since $\tilde{\mathcal{T}}$ is generated by $\lbrace \mathcal{T}_m \rbrace_{m\geq 1}$, it does not seem surprising that it is also compact and separable.
 \begin{lemma}\label{Lemma Upsilon Dense}
 $\big( \mathcal{P}(\mathcal{D}\times\mathcal{P}(\mathcal{D}))  , \tilde{\mathcal{T}} \big)$ is compact and separable. Also $\Upsilon_1$ (as defined in \eqref{eq: Upsilon 1}) is dense in  $\big( \mathcal{P}(\mathcal{D}\times\mathcal{P}(\mathcal{D}))  , \tilde{\mathcal{T}} \big)$.
 \end{lemma}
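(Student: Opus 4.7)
The plan is to deduce compactness, separability, and density of $\Upsilon_1$ from Lemma~\ref{Lemma T m compact} together with the observation that the topologies $\mathcal{T}_m$ form an increasing chain.

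First I will establish the monotonicity $\mathcal{T}_m \subseteq \mathcal{T}_{m+1}$. For each $m \geq 1$ I construct a continuous map $p_m : \mathcal{V}_{m+1} \to \mathcal{V}_m$ satisfying $\Phi_m = p_m \circ \Phi_{m+1}$. Since $\mathcal{V}_{m+1} = \mathcal{P}(\mathcal{D}\times\mathcal{V}_m)$ and $\mathcal{V}_m = \mathcal{P}(\mathcal{D}\times\mathcal{V}_{m-1})$, I take $p_m$ to be pushforward along the map $(\mathrm{id}_{\mathcal{D}}, q_{m-1}) : \mathcal{D}\times\mathcal{V}_m \to \mathcal{D}\times\mathcal{V}_{m-1}$, where $q_{m-1} : \mathcal{V}_m \to \mathcal{V}_{m-1}$ is the analogous marginalizing map defined inductively, with $q_0 : \mathcal{V}_1 = \mathcal{P}(\mathcal{D}\times\mathcal{P}(\mathcal{D})) \to \mathcal{V}_0 = \mathcal{P}(\mathcal{D})$ being the second marginal. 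Continuity of the pushforward under continuous maps between compact Polish spaces gives continuity of $p_m$. Unwinding the inductive definition of $\Phi_m$ via the kernels $\Lambda_{\mu,\cdot}$ and comparing the conditional-expectation formulae on the two sides verifies the identity $\Phi_m = p_m \circ \Phi_{m+1}$. Iterating, $\mathcal{T}_m \subseteq \mathcal{T}_M$ whenever $m \leq M$.

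For density and separability, let $\mu \in \mathcal{P}(\mathcal{D}\times\mathcal{P}(\mathcal{D}))$ and let $U = \bigcap_{i=1}^k \Phi_{m_i}^{-1}(\mathcal{O}_i)$ be a basic $\tilde{\mathcal{T}}$-neighborhood of $\mu$ with each $\mathcal{O}_i$ weakly open in $\mathcal{V}_{m_i}$. Setting $M = \max_i m_i$, monotonicity gives $U \in \mathcal{T}_M$. The proof of Lemma~\ref{Lemma T m compact} already exhibits $\Upsilon_1$ as dense in $(\mathcal{P}(\mathcal{D}\times\mathcal{P}(\mathcal{D})), \mathcal{T}_M)$, so $U \cap \Upsilon_1 \neq \emptyset$. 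Since $\Upsilon_1$ is countable, this simultaneously yields separability of $\tilde{\mathcal{T}}$.

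For compactness I use the ultrafilter criterion. Let $\mathcal{F}$ be an ultrafilter on $\mathcal{P}(\mathcal{D}\times\mathcal{P}(\mathcal{D}))$. Each $\mathcal{T}_m$ is compact by Lemma~\ref{Lemma T m compact} and Hausdorff since it refines the Hausdorff weak topology, so $\mathcal{F}$ converges in $\mathcal{T}_m$ to a unique point $\mu_m$. Because convergence in $\mathcal{T}_m$ forces convergence in the coarser weak topology to the same point, all the $\mu_m$ coincide with the unique weak limit $\mu$. Any basic $\tilde{\mathcal{T}}$-neighborhood of $\mu$ is a finite intersection of $\mathcal{T}_{m_i}$-neighborhoods, each belonging to $\mathcal{F}$; hence the intersection also lies in $\mathcal{F}$, and $\mathcal{F} \to \mu$ in $\tilde{\mathcal{T}}$. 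The main obstacle is the monotonicity step: defining the projections $p_m$ and verifying $\Phi_m = p_m \circ \Phi_{m+1}$ requires carefully unpacking the nested definition of $\Phi_m$ in terms of the regular conditional kernels $\Lambda_{\mu,\cdot}$; once this compatibility is in place, the density and ultrafilter arguments are routine.
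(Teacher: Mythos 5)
Your argument is correct in substance but routed differently from the paper's, and in one respect it is more careful. For compactness the paper embeds $\big(\mathcal{P}(\mathcal{D}\times\mathcal{P}(\mathcal{D})),\tilde{\mathcal{T}}\big)$ as the diagonal of the product $\prod_{m\geq 1}\big(\mathcal{P}(\mathcal{D}\times\mathcal{P}(\mathcal{D})),\mathcal{T}_m\big)$, invokes Tychonoff, and notes that the diagonal is closed; your ultrafilter argument is logically equivalent (both hinge on the same two facts: each $\mathcal{T}_m$ is compact by Lemma \ref{Lemma T m compact}, and all the $\mathcal{T}_m$ refine the common Hausdorff weak topology, which forces the per-coordinate limits to agree). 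For separability and density the paper simply writes that the conclusion ``follows immediately'' from separability of each $\mathcal{T}_m$; you correctly identify that this is not automatic, since a basic $\tilde{\mathcal{T}}$-neighborhood is a finite intersection $\bigcap_i\Phi_{m_i}^{-1}(\mathcal{O}_i)$ and density of $\Upsilon_1$ in each $\mathcal{T}_{m_i}$ separately does not force $\Upsilon_1$ to meet the intersection. Your monotonicity $\mathcal{T}_m\subseteq\mathcal{T}_{m+1}$ is exactly the missing ingredient, and it is consistent with the paper's own (unproved) assertion that each $\mathcal{T}_m$ refines $\mathcal{T}_w=\mathcal{T}_1$, which is the case $m=1$ of your claim.

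The one genuine soft spot is that you assert rather than verify the compatibility $\Phi_m=p_m\circ\Phi_{m+1}$. This is where all the content lives: one must check from the recursive definition $\mathbb{E}^{\Phi_{m+1}\cdot\mu}[gh]=\mathbb{E}^{\Phi_m\cdot\mu}\big[g(x)\,\mathbb{E}^{\Lambda_{\mu,x}}[h]\big]$ that integrating out the deepest level of nesting (taking $h\equiv 1$) returns $\Phi_m\cdot\mu$, and that the forgetting maps $q_k$ are weakly continuous. Note also a small slip: your base case $q_0:\mathcal{V}_1=\mathcal{P}(\mathcal{D}\times\mathcal{P}(\mathcal{D}))\to\mathcal{V}_0=\mathcal{P}(\mathcal{D})$ should be the marginal onto the $\mathcal{D}$ factor, not the ``second marginal'' (the marginal onto $\mathcal{P}(\mathcal{D})$ lands in $\mathcal{P}(\mathcal{P}(\mathcal{D}))$, which is not $\mathcal{V}_0$). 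With that correction and the compatibility check written out, your proof is complete; in fact it supplies a justification for the paper's ``follows immediately'' that the paper itself omits, and the same compatibility is what the paper implicitly uses when it claims the diagonal is closed and that $\mathcal{T}_m$ refines $\mathcal{T}_w$.
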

 \begin{proof}
Define the following product topological space $\Gamma = \prod_{m=1}^{\infty} \mathcal{U}_m$, equipping $\Gamma$ with the product topology, and let each $\mathcal{U}_m$ be $\mathcal{P}(\mathcal{D}\times\mathcal{P}(\mathcal{D}))$ equipped with the topology $\mathcal{T}_m$. Since, as proved in Lemma \ref{Lemma T m compact}, $\big( \mathcal{P}(\mathcal{D}\times\mathcal{P}(\mathcal{D})), \mathcal{T}_m\big)$ is compact, it follows from Tychonoff's Theorem that $\Gamma$ is compact. Notice that $\big( \mathcal{P}(\mathcal{D}\times\mathcal{P}(\mathcal{D}))  , \tilde{\mathcal{T}} \big)$ is topologically homeomorphic to the closed subspace of $\Gamma$ consisting of elements of the form $\prod_{m=1}^{\infty} \mu$. We can thus conclude that $\big( \mathcal{P}(\mathcal{D}\times\mathcal{P}(\mathcal{D}))  , \tilde{\mathcal{T}} \big)$ is compact. The separability of  $\big( \mathcal{P}(\mathcal{D}\times\mathcal{P}(\mathcal{D}))  , \tilde{\mathcal{T}} \big)$ follows immediately from the facts that (i) $\big( \mathcal{P}(\mathcal{D}\times\mathcal{P}(\mathcal{D}))  , \mathcal{T}_m \big)$ is separable for each $m\in \mathbb{Z}^+$ and (ii) the topology $\tilde{\mathcal{T}}$ is (by definition) generated by the topologies $\lbrace \mathcal{T}_m \rbrace_{m\geq 1}$.
 \end{proof}
 
%\subsection{The $\tilde{\mathcal{T}}$ topology on $\mathcal{P}(\mathcal{D}\times\mathcal{P}(\mathcal{D}))$}\label{Section Appendix}
%In this section we study the properties of the topology $\tilde{\mathcal{T}}$ topology on $\mathcal{P}(\mathcal{D}\times\mathcal{P}(\mathcal{D}))$. This topology is more refined than the standard weak topology on $\mathcal{P}(\mathcal{D}\times\mathcal{P}(\mathcal{D}))$.
\begin{lemma}\label{Lemma tilde T topology properties}
$\big( \mathcal{P}(\mathcal{D}\times\mathcal{P}(\mathcal{D})),\tilde{\mathcal{T}}\big)$ is Hausdorff.
%
%\begin{itemize}
%\item $\mathcal{T}_m$ is a subtopology of $\mathcal{T}_{m+1}$.
%\item $\big( \mathcal{P}(\mathcal{D}\times\mathcal{P}(\mathcal{D})),\mathcal{T}_m\big)$ is separable.
%\item $ \mathcal{P}(\mathcal{D}\times\mathcal{P}(\mathcal{D}))$ is compact with respect to the topology $\mathcal{T}_m$.
%\item $ \mathcal{P}(\mathcal{D}\times\mathcal{P}(\mathcal{D}))$ is compact with respect to the topology $\tilde{\mathcal{T}}$.
%\end{itemize}
\end{lemma}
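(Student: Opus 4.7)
The plan is to exploit the fact that $\tilde{\mathcal{T}}$ is, by construction, finer than the standard weak topology on $\mathcal{P}(\mathcal{D}\times\mathcal{P}(\mathcal{D}))$, and that the weak topology is already Hausdorff. Concretely, the author has already observed that $\mathcal{T}_1$ coincides with the weak topology $\mathcal{T}_w$ (since $\Phi_1$ is the identity map and the metric $d_1$ induces the weak topology on $\mathcal{V}_1 = \mathcal{P}(\mathcal{D}\times\mathcal{P}(\mathcal{D}))$), and that $\tilde{\mathcal{T}}$ is by definition the coarsest topology containing $\bigcup_{m\geq 1}\mathcal{T}_m$. Consequently $\mathcal{T}_w = \mathcal{T}_1 \subseteq \tilde{\mathcal{T}}$.

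First I would verify the easy topological fact that any topology refining a Hausdorff topology is itself Hausdorff. Indeed, if $\mu \neq \nu$ in $\mathcal{P}(\mathcal{D}\times\mathcal{P}(\mathcal{D}))$ and $U_\mu, U_\nu \in \mathcal{T}_w$ are disjoint $\mathcal{T}_w$-open neighborhoods of $\mu$ and $\nu$, then $U_\mu, U_\nu$ also lie in $\tilde{\mathcal{T}}$ and still separate $\mu$ from $\nu$. So it suffices to establish that $(\mathcal{P}(\mathcal{D}\times\mathcal{P}(\mathcal{D})), \mathcal{T}_w)$ is Hausdorff.

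For the latter, I would invoke the fact that $\mathcal{D}\times\mathcal{P}(\mathcal{D})$ is a compact Polish space (this follows from Lemma \ref{Lemma V m compact} applied at level $m=1$, or alternatively from Prokhorov's theorem applied to the compact space $\mathcal{D}$). Hence $\mathcal{P}(\mathcal{D}\times\mathcal{P}(\mathcal{D}))$ endowed with the weak topology is metrizable, for instance via the Wasserstein metric $d_1$ employed in Section \ref{Section Topological Specification}, and every metrizable space is Hausdorff. Combining this with the previous paragraph yields that $\tilde{\mathcal{T}}$ is Hausdorff.

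There is no real obstacle here: the proof is essentially a one-line consequence of the definition together with the standard fact that the weak topology on probability measures over a Polish space is metrizable. The only thing to be slightly careful about is confirming that $\mathcal{T}_1$ really does contain the weak topology (rather than being some strictly coarser variant), but this is immediate from the definition of $\Phi_1$ as the identity and the choice of $d_1$ as the Wasserstein metric on $\mathcal{P}(\mathcal{D}\times\mathcal{P}(\mathcal{D}))$ induced by $\|\cdot\| + d_0$.
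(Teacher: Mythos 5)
Your proof is correct and follows essentially the same route as the paper: the paper's own argument also separates $\mu \neq \nu$ by disjoint weak-open neighborhoods (using that the weak topology on probability measures over a Polish space is Hausdorff) and then notes these neighborhoods remain open in the finer topology $\tilde{\mathcal{T}}$. Your additional verification that $\mathcal{T}_1 = \mathcal{T}_w \subseteq \tilde{\mathcal{T}}$ is a harmless elaboration of a fact the paper states in passing.
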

\begin{proof}
If $\mu,\nu \in \mathcal{P}(\mathcal{D}\times\mathcal{P}(\mathcal{D}))$ and $\mu\neq \nu$, there must exist disjoint neighborhoods $A,B \in \mathcal{T}_w$ (the weak topology on  $\mathcal{P}(\mathcal{D}\times\mathcal{P}(\mathcal{D}))$), such that $\mu \in A$ and $\nu \in B$. This follows from the fact that the weak topology for a space of probability measures over a Polish space is Hausforff. Since $A,B \in \tilde{\mathcal{T}}$, we have proved the lemma.%This follows from the fact that $\mathcal{T}_0$ is Hausdorff (its well known that the standard weak topology for probability measures on a Polish space is Hausdorff and separable). 
%The compactness with respect to $\tilde{\mathcal{T}}$ follows from Tychonoff's Theorem.
\end{proof}
%Let $\tilde{\mathcal{V}}_m \subset \mathcal{V}_m$ be the space of all probability measures such that $\tilde{\mathcal{V}}_0 = \mathcal{D}$ and
%\begin{equation}
%\mathcal{V}_m = \mathcal{P}\big(\mathcal{D}\times \mathcal{V}_{m-1}\big).
%\end{equation}

\subsection{Definition of the Push-Forward Map}\label{Section Push Forward Map}

Fix $m\in \mathbb{Z}^+$. We now define a continuous map $\Psi_m :\big(\mathcal{P}(\mathcal{D}\times\mathcal{P}(\mathcal{D})),\tilde{\mathcal{T}}\big)\to \big(\mathcal{P}\big( \mathcal{C}([0,T] , \mathbb{R}^d) \big),\mathcal{T}_w\big)$ as follows. First, we specify $\Psi_m \cdot \nu$ in the case that $\nu$ is an empirical measure in $\Upsilon_1$ (as defined in \eqref{eq: Upsilon 1}), i.e. of the form, for some integer $q \geq 1$,
\begin{align}
\nu =q^{-1}\sum_{1\leq j \leq q}\delta_{(y^j , \nu_j)} \label{eq: nu empirical measure} \; , \;
\nu_j = \big|\tilde{\Xi}_j\big|^{-1}\sum_{k\in \tilde{\Xi}_j} \delta_{y^k},
\end{align}
for arbitrary $\lbrace y^j \rbrace_{j\in I_q} \subset \mathcal{D}$ and $\tilde{\Xi}_j \subset \lbrace 1,2,\ldots, q \rbrace$. Define $\lbrace y_m^j(s) \rbrace_{s\in [0,T]} \in \mathcal{C}([0,T],\mathbb{R}^d)$ as follows: for $0\leq p < m$ and $s\in [pTm^{-1} , (p+1)Tm^{-1}]$,
\begin{align}
y_m^j\big(s \big) = & y_m^j\big( pT/m \big) + (s - pTm^{-1})\big\lbrace f(y_m^j(pT/m)) + \mathbb{E}^{\nu^m_j(pT/m)}\big[G\big( y_m^j(pT/m) , \cdot \big)\big] \big\rbrace \text{ where }\nonumber\\
%y_m^j\big( \lambda (p+1)T/m + (1-\lambda)pT/m \big) =  &\lambda y_m^j\big( (p+1)T/m \big) +(1-\lambda) y_m^j\big( pT/m \big)\text{ for }\lambda \in [0,1] \text{ and }0\leq p \leq m-1\nonumber \\%\\ \text{ when }s = &\lambda (p+1)T/m + (1-\lambda) pT/m \nonumber\\
\nu^m_j(s) =&  \big|\tilde{\Xi}_j\big|^{-1}\sum_{k\in \tilde{\Xi}_j} \delta_{y_m^k(s)}.
\end{align}
It can be seen that the variables $y_m^j(s)$ are well-defined: one obtains the solution at the discretized time steps $\lbrace pTm^{-1} \rbrace_{0\leq p \leq m}$ by Euler-stepping, and obtains the solution at all other times through linear interpolation. We then define
\begin{align}
\Psi_m \cdot \nu =& \hat{\nu}^n_{[0,T]} \in \mathcal{P}\big(\mathcal{C}([0,T],\mathbb{R}^d) \big) \text{ where }\\
\hat{\nu}^n_{T} =& \frac{1}{2q+1}\sum_{j\in I_n}\delta_{y^j_m([0,T])}. \label{eq: Psi m definition empirical measure}
\end{align}
Notice first that $\Psi_m \cdot \nu $ is consistently defined: a permutation of the indices (remembering to also permute the connection indices) leaves both $\nu$ and $\Psi_m\cdot \nu$ unchanged.  Recall that we proved in Lemma \ref{Lemma Upsilon Dense} that empirical measures of the form in \eqref{eq: nu empirical measure} are dense in $\big(\mathcal{P}(\mathcal{D}\times\mathcal{P}(\mathcal{D})), \tilde{\mathcal{T}}\big)$: we are going to use this property to extend the above definition to arbitrary measures in $\mathcal{P}(\mathcal{D}\times\mathcal{P}(\mathcal{D}))$. To this end, we next define, for an arbitrary $\gamma \in \mathcal{V}_m$,
\begin{align}
\Psi_m \cdot \gamma &:= \lim_{p\to\infty}\Psi_m \cdot \mu_p \text{ where }
\end{align}
$\lbrace \mu_p \rbrace_{p\geq 1}$ is any sequence of empirical measures of the general form \eqref{eq: nu empirical measure} such that
\[
\Phi_m \cdot \mu_p \to \Phi_m\cdot \gamma \text{ with respect to the weak topology on }\mathcal{V}_m.
\]
\begin{lemma}\label{Lemma Psi m well defined}
 $\Psi_m : \mathcal{P}\big(\mathcal{D}\times\mathcal{P}(\mathcal{D})\big) \to \mathcal{P}\big( \mathcal{C}([0,T] , \mathbb{R}^d) \big)$ is well-defined. Also $\Psi_m$ is continuous, when $\mathcal{P}\big(\mathcal{D}\times\mathcal{P}(\mathcal{D})\big)$ is endowed with the topology $\mathcal{T}_m$, and $ \mathcal{P}\big( \mathcal{C}([0,T] , \mathbb{R}^d) \big)$ is endowed with the weak topology.
\end{lemma}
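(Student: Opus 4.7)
The plan is to show that $\Psi_m$ factors as $\Psi_m = \tilde{\Psi}_m \circ \Phi_m$ for a continuous map $\tilde{\Psi}_m : (\mathcal{V}_m, \mathcal{T}_w) \to \bigl(\mathcal{P}(\mathcal{C}([0,T], \mathbb{R}^d)), \mathcal{T}_w\bigr)$. The intuition is that $m$ Euler steps only probe the network within $m$ edge-hops of each vertex, and this is exactly the depth-$m$ nested information encoded by $\Phi_m \cdot \mu$. Once $\tilde{\Psi}_m$ is constructed and shown to be continuous, both conclusions of the lemma follow: (i) for any two sequences $\mu_p, \mu_p' \in \Upsilon_1$ with $\Phi_m \cdot \mu_p, \Phi_m \cdot \mu_p' \to \Phi_m \cdot \gamma$ weakly in $\mathcal{V}_m$, continuity of $\tilde{\Psi}_m$ gives $\Psi_m \cdot \mu_p, \Psi_m \cdot \mu_p' \to \tilde{\Psi}_m ( \Phi_m \cdot \gamma)$, so the extension defining $\Psi_m \cdot \gamma$ exists and is sequence-independent; and (ii) continuity of the composition $\tilde{\Psi}_m \circ \Phi_m$ on the domain equipped with $\mathcal{T}_m$ is immediate, since by construction $\mathcal{T}_m$ is the coarsest topology making $\Phi_m$ continuous into $(\mathcal{V}_m, \mathcal{T}_w)$.

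To construct $\tilde{\Psi}_m$, I would define maps $\chi_k : (\mathcal{V}_k, \mathcal{T}_w) \to \mathcal{P}(\mathcal{C}([0, kT/m], \mathbb{R}^d))$ inductively for $k=0,1,\ldots,m$, and set $\tilde{\Psi}_m := \chi_m$. The map $\chi_k(\gamma)$ is to represent the law of the length-$kT/m$ Euler trajectory of a ``root'' particle whose depth-$k$ neighborhood is distributed according to $\gamma$. The base case is $\chi_0 = \mathrm{id}$ on $\mathcal{V}_0 = \mathcal{P}(\mathcal{D})$ (length-zero trajectories are just initial positions). Inductively, for $\gamma \in \mathcal{V}_k$ and a sample $(y, \beta) \sim \gamma$ with $\beta \in \mathcal{V}_{k-1}$, I define a trajectory $T_k(y, \beta) \in \mathcal{C}([0, kT/m], \mathbb{R}^d)$ by Euler stepping the root: at each update time $jT/m$ (for $j=0,\ldots,k-1$), use as neighborhood measure the push-forward of $\chi_{k-1}(\beta)$ under evaluation at time $jT/m$, with linear interpolation between update times. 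Then $\chi_k(\gamma) := \gamma \circ T_k^{-1}$. A direct computation on empirical measures $\mu \in \Upsilon_1$ shows $\Psi_m \cdot \mu = \chi_m \cdot (\Phi_m \cdot \mu)$, since in this case $\Phi_m \cdot \mu$ literally repackages the finite nested neighborhood data consumed by the original Euler scheme.

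Continuity of $\chi_k$ is then verified by induction on $k$. The base case is immediate. For the inductive step, assuming $\chi_{k-1}$ weakly continuous, I need joint continuity of $T_k : \mathcal{D} \times \mathcal{V}_{k-1} \to \mathcal{C}([0, kT/m], \mathbb{R}^d)$: continuity in $y$ uses boundedness and uniform Lipschitz continuity of $G$ and $f$; continuity in $\beta$ uses that the evaluation-at-$jT/m$ marginal of $\chi_{k-1}(\beta)$ depends continuously on $\beta$ (by continuity of $\chi_{k-1}$ and of time-evaluation on path space), combined again with the Lipschitz bounds in the update formula. Weak continuity of $\gamma \mapsto \gamma \circ T_k^{-1}$ is then standard. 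The principal obstacle will be getting the indexing right so that $k$ levels of nesting are consumed across exactly $k$ Euler steps: each step advances the root by one increment and requires neighbor positions at the next update time, which are themselves the output of one fewer Euler step applied to a measure in $\mathcal{V}_{k-1}$. Once this bookkeeping is consistent, compactness of $\mathcal{D}$ and boundedness of $G, f$ make the Lipschitz/approximation estimates routine.
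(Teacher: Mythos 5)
Your proposal is correct and follows essentially the same route as the paper: both factor $\Psi_m$ as a continuous map on $(\mathcal{V}_m,\mathcal{T}_w)$ composed with $\Phi_m$ (which is continuous into $\mathcal{V}_m$ by the very definition of $\mathcal{T}_m$), and both rest on the observation that each Euler step consumes exactly one level of nesting. The only difference is bookkeeping --- the paper composes $m$ one-time-step maps $\Gamma^m_k$ between nested path-measure spaces $\mathcal{W}^m_k$, advancing every level of the nesting simultaneously and dropping the deepest level at each step, whereas you recurse on nesting depth via the maps $\chi_k$; the two schemes compute the same object.
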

\begin{proof}
We wish to define a map $\Gamma^m : \mathcal{V}_m \to  \mathcal{P}\big( \mathcal{C}([0,T] , \mathbb{R}^d) \big)$ such that
\begin{equation}\label{eq: Psi m definition}
\Psi_m = \Gamma^m \cdot \Phi_m.
\end{equation}
 $\Gamma^m$ will be shown to be continuous with respect to the weak topology on $\mathcal{V}_m$. Define, for $0\leq k \leq m-1$, $\delta t = Tm^{-1}$ and
\begin{align}
\mathcal{W}^m_0 =& \mathcal{P} \big(\mathcal{D}\times \mathcal{P}\big(\mathcal{D}\times \mathcal{P}\big(\ldots  \times \mathcal{P}(\mathcal{D})\big)\ldots \big) \big)\label{eq: W m 0} \\
\mathcal{W}^m_k =& \mathcal{P}\big( \mathcal{Z}_k \times \mathcal{P}\big( \mathcal{Z}_k \times \mathcal{P}\big( \ldots  \times \mathcal{P}(\mathcal{Z}_k )\ldots \big)\big) \big) \text{ where }\\
\mathcal{Z}_k =& \mathcal{C}([0,k\delta t] , \mathbb{R}^d)
\end{align}
In the above definition of $\mathcal{W}^m_0$, there are $m+1$ nested $\mathcal{D}$ (i.e. there are $m$ right parentheses on the right side of \eqref{eq: W m 0}). In the definition of $\mathcal{W}^m_k$, there are $m+1-k$ nested $\mathcal{Z}_k$.
We now define a continuous map $\Gamma^m_k : \mathcal{W}^m_k \to \mathcal{W}^m_{k+1}$ as follows. Fix $\alpha \in \mathcal{W}^m_k$ and write $\beta = \Gamma^m_k \cdot \alpha$. 

 Write $\alpha$ to be the probability law of the random variables $\big(z^{(0)}_{[0,k\delta t]} , \alpha^{(0)} \big)$, where $z^{(0)}_{[0,k\delta t]} \in \mathcal{Z}_k$ and $\alpha^{(0)} \in \mathcal{P} \big( \mathcal{Z}_k \times \mathcal{P}\big( \mathcal{Z}_k \times \mathcal{P}\big( \ldots \mathcal{P}( \mathcal{Z}_k )\ldots \big)\big) \big)$ (and there are $m-k$ nested $\mathcal{Z}_k$ in this expression for $\alpha^{(0)}$). In turn, write $\alpha^{(0)}$ to be the probability law of $\big(z^{(1)}_{[0,k\delta t]} , \alpha^{(1)} \big)$, where $\alpha^{(1)} \in  \mathcal{P}\big( \mathcal{Z}_k \times \mathcal{P}\big( \mathcal{Z}_k \times \mathcal{P}\big( \ldots \mathcal{P}(\mathcal{Z}_k )\ldots \big)\big) \big)$ (there are $m-k-1$ nested $\mathcal{Z}_k$). Eventually we obtain $\alpha^{(m-k-1)} \in \mathcal{P}(\mathcal{Z}_k \times \mathcal{P}(\mathcal{Z}_k))$: write this to be the probability law of the random variables $\big(z^{(m-k)} _{[0,k\delta t]},  \alpha^{(m-k)} \big)$, with $ \alpha^{(m-k)}  \in \mathcal{P}(\mathcal{Z}_k)$. 

Now, for $0 \leq a \leq m-k$, define new random variables $y^{(a)}_{[0,(k+1)\delta t]} \in \mathcal{Z}_{k+1}$, such that for each $s\in (k\delta t, (k+1)\delta t]$,
\begin{align}
y^{(a)}_s =& z^{(a)}_{k\delta t} + (s- k\delta t)\big\lbrace f(z^{(a)}_{k\delta t}) + \int G(z^{(a)}_{k\delta t} , x_{k\delta t})d\alpha^{(a)}(x_{[0,k\delta t]})  \big\rbrace \\
y^{(a)}_s =& z^{(a)}_s \text{ for }0\leq s \leq k\delta t.
\end{align}
Define $\beta^{(m-k-1)} \in \mathcal{P}(\mathcal{Z}_{k+1})$ to be the probability law of $ (y^{(m-k-1)}_{[0,(k+1)\delta t]})$, and for $0\leq a < m-k-1$,  define $\beta^{(a)}$ to be the probability law of $(y^{(a+1)}_{[0,(k+1)\delta t]},\beta^{(a+1)})$. Finally, $\beta \in \mathcal{W}^{m}_{k+1}$ is defined to be the probability law of $\big( y^{(0)}, \beta^{(0)} \big)$.

We observe that the above definition of $\Psi_m$ is consistent with the definition in \eqref{eq: Psi m definition empirical measure} for empirical measures.  Also it can be seen that $\Gamma^m_k : \mathcal{W}^m_k \to \mathcal{W}^m_{k+1}$ is continuous when both $\mathcal{W}^m_k$ and $\mathcal{W}^m_{k+1}$ are endowed with the weak topology. Now define
\begin{equation}
\Gamma^m = \Gamma^m_m \cdot \Gamma^m_{m-1}\cdots \ldots \Gamma^m_1 \cdot \Gamma^m_0.
\end{equation}
Evidently $\Gamma^m$ is continuous as well. By definition of the topology $\mathcal{T}_m$, the mapping $\Phi_m : \big( \mathcal{P}(\mathcal{D}\times\mathcal{P}(\mathcal{D})), \mathcal{T}_m\big) \mapsto \big(\mathcal{V}_{m+1},\mathcal{T}_w \big)$ is continuous. We have thus proved the continuity of $\Psi_m$.
\end{proof}
%We wish to define $\Psi : \mathcal{P}(\mathcal{D}\times\mathcal{P}(\mathcal{D})) \mapsto \mathcal{P}\big( \mathcal{C}([0,T],\mathbb{R}^d)\big)$ to be such that for all measues in a `large' subset of $\mathcal{P}(\mathcal{D}\times\mathcal{P}(\mathcal{D})) $, $\Psi \cdot \mu := \lim_{m\to\infty}\Psi_m \cdot \mu$. First we need to better understand the circumstances under which this limit exists. Let $\Upsilon \subset \mathcal{P}(\mathcal{D}\times\mathcal{P}(\mathcal{D}))$ be the set of al empirical measures, i.e.
%\begin{equation}
%\Upsilon = \bigg\lbrace \hat{\mu}^q = \frac{1}{2q+1}\sum_{j\in I_q}\delta_{(y^j , \hat{\mu}^q_j)} \; : \; \hat{\mu}^q_j = \frac{1}{\Xi_j}\sum_{k\in \Xi_j}\delta_{y^k} \text{ for some }q\in\mathbb{Z}^+ \; , \lbrace y^j \rbrace_{j\in I_q} \subset \mathcal{D}, \; \Xi_j \subset I_q\bigg\rbrace.
%\end{equation}
We first study $\Psi_m$ when $\nu \in \Upsilon_1$ is an empirical measure of the form
\begin{align}
\nu = \frac{1}{2q+1}\sum_{j\in I_q}\delta_{(y^j , \nu_j)} \label{eq: nu empirical measure 22} \; \; , \; \;
\nu_j = \big|\tilde{\Xi}_j\big|^{-1}\sum_{k\in \tilde{\Xi}_j} \delta_{y^k},
\end{align}
for arbitrary $\lbrace y^j \rbrace_{j\in I_q} \subset \mathcal{D}$, and $\Xi_j \subset I_q$. Define
\begin{align}
z^j( t ) = & y^j + \int_0^t \big\lbrace f(z^j(s)) +\mathbb{E}^{\nu_j(s)}\big[G\big( z^j(s) ,\cdot \big)\big] \big\rbrace ds \text{ where }\label{eq: z dynamics definition} \\
\nu_j(s) =&  \big|\tilde{\Xi}_j\big|^{-1}\sum_{k\in \tilde{\Xi}_j} \delta_{z^k(s)}.
\end{align}
Since the functions $f$ and $G$ are Lipschitz and bounded, Picard's Theorem implies the existence of a unique solution. One easily checks using Gronwall's Inequality  that there exist constants $\lbrace C_m \rbrace_{m\geq 1}$, with $C_m > 0$, and such that $\lim_{m\to\infty}C_m = 0$ and for all such empirical measures in $\Upsilon_1$,
\begin{equation}
\sup_{j\in I_n}\sup_{s\in [0,T]}\norm{y^j(s) - z^j(s)} \leq C_m / 2.
\end{equation}
We endow $\mathcal{P}\big(\mathcal{C}([0,T],\mathbb{R}^d)\big)$ with the Wasserstein metric, given by
\begin{equation}
d_W(\mu,\nu) = \inf_{\eta} \mathbb{E}^{\eta}\big[\sup_{s\in [0,T]}\norm{y_s - z_s} \big],
\end{equation}
the infimum being taken over all measures $\eta \in \mathcal{P}\big(\mathcal{C}([0,T],\mathbb{R}^d)\times \mathcal{C}([0,T],\mathbb{R}^d)\big)$ with marginal laws $\mu$ and $\nu$. We have thus established the following lemma.
\begin{lemma}
For all $n\geq m$,
\begin{align}\label{eq: uniform convergence m}
\sup_{\nu \in \Upsilon_1}d_W( \Psi_m \cdot \nu , \Psi_n\cdot \nu ) &\leq C_m \text{ and therefore  }\\
\lim_{m\to\infty} \Psi_m \cdot \nu &= \frac{1}{2q+1}\sum_{j\in I_q}\delta_{z^j([0,T])} \text{ for all }\nu \in \Upsilon_1.
\end{align}
%of the form in \eqref{eq: nu empirical measure 22}, with $z^j(t)$ defined in \eqref{eq: z dynamics definition}.
\end{lemma}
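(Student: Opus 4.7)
The plan is to compare both $\Psi_m\cdot\nu$ and $\Psi_n\cdot\nu$ against the true ODE trajectories $z^j$ defined in \eqref{eq: z dynamics definition}, and then apply the triangle inequality. Since $f$ and $G$ are bounded and uniformly Lipschitz (with Lipschitz constants $L_f, L_G$ independent of $\nu$), and every initial condition satisfies $\|y^j\|\leq C_{ini}$, Picard's theorem guarantees existence and uniqueness of $z^j$ on $[0,T]$, with a uniform a priori bound on $\sup_{s\in[0,T]}\|z^j(s)\|$ depending only on $C_{ini}$, $T$ and $\|G\|_\infty, \|f\|_\infty$. All constants in what follows will depend only on these global quantities, never on the specific empirical measure $\nu\in\Upsilon_1$.

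First I would establish the standard Euler error estimate: for the $m$-step Euler scheme $y^j_m$ associated to $\nu$, show that
\[
\sup_{j\in I_q}\sup_{s\in[0,T]}\|y^j_m(s)-z^j(s)\|\leq \tilde{C}_m,
\]
with $\tilde{C}_m\to 0$ as $m\to\infty$, uniformly over $\nu\in\Upsilon_1$. This is a textbook calculation: on each subinterval $[pT/m,(p+1)T/m]$, the local truncation error is $O(1/m^2)$ because the vector field $v^j(s):=f(z^j(s))+\mathbb{E}^{\nu_j(s)}[G(z^j(s),\cdot)]$ is Lipschitz in $s$ (its derivative is bounded since $f,G$ are bounded and Lipschitz and the $z^k$ have bounded derivatives). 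The key point for uniformity is that the map $x\mapsto f(x)+\mathbb{E}^{\nu_j(s)}[G(x,\cdot)]$ is Lipschitz with constant $L_f+L_G$, independent of $\nu_j(s)$; combined with Lipschitz dependence on the ensemble $\nu_j$ through the Wasserstein metric, a coupled Gronwall argument produces a bound involving only $L_f,L_G,T$ and $1/m$.

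Then the triangle inequality yields, for $n\geq m$,
\[
\sup_{j\in I_q}\sup_{s\in[0,T]}\|y^j_m(s)-y^j_n(s)\|\leq \tilde{C}_m+\tilde{C}_n\leq 2\tilde{C}_m.
\]
Setting $C_m:=4\tilde{C}_m$ and using the natural coupling of the empirical measures $\Psi_m\cdot\nu$ and $\Psi_n\cdot\nu$ that matches the $j$-th atom of one to the $j$-th atom of the other, we get
\[
d_W(\Psi_m\cdot\nu,\Psi_n\cdot\nu)\leq \frac{1}{2q+1}\sum_{j\in I_q}\sup_{s\in[0,T]}\|y^j_m(s)-y^j_n(s)\|\leq C_m/2,
\]
which gives the first display. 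The second display follows by the same coupling argument: since $\sup_j\sup_s\|y^j_m(s)-z^j(s)\|\to 0$, we have $d_W\big(\Psi_m\cdot\nu,\tfrac{1}{2q+1}\sum_{j\in I_q}\delta_{z^j_{[0,T]}}\big)\to 0$.

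The main obstacle is ensuring that $C_m$ is genuinely independent of $\nu\in\Upsilon_1$ (which includes arbitrary $q$, arbitrary configurations $\{y^j\}\subset\mathcal{D}\cap\mathbb{Q}^d$, and arbitrary connection sets $\tilde{\Xi}_j$). This uniformity hinges on two facts that must be tracked carefully through the Gronwall estimate: (i) the Lipschitz constant of $x\mapsto\mathbb{E}^{\mu}[G(x,\cdot)]$ is at most $L_G$ regardless of $\mu\in\mathcal{P}(\mathcal{D})$; and (ii) all trajectories stay in a bounded region determined only by $C_{ini}$, $\|f\|_\infty$, $\|G\|_\infty$ and $T$, independent of $q$ and of the connection structure. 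Once these uniformity points are verified, the standard Euler analysis closes the argument.
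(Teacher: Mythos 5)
Your proposal is correct and follows essentially the same route as the paper, which simply defines the limiting trajectories $z^j$ via Picard's theorem and asserts (via Gronwall's inequality) the uniform bound $\sup_{j}\sup_{s\in[0,T]}\norm{y^j_m(s)-z^j(s)}\leq C_m/2$ over all of $\Upsilon_1$, then concludes by the triangle inequality and the diagonal coupling in the Wasserstein metric. Your write-up is in fact more careful than the paper's one-line argument, correctly identifying the two uniformity points (the $\nu$-independent Lipschitz constant of $x\mapsto\mathbb{E}^{\mu}[G(x,\cdot)]$ and the a priori boundedness of all trajectories) on which the claim rests.
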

We are now in a position to define $\Psi \cdot \mu$ for an arbitrary $\mu \in \mathcal{P}(\mathcal{D}\times\mathcal{P}(\mathcal{D}))$. Let $\lbrace \nu^p \rbrace_{p\geq 1} \subset \Upsilon$ be a sequence of empirical measures such that $\nu^p \to \mu$ with respect to the topology $\tilde{\mathcal{T}}$ (in Lemma \ref{Lemma T m compact} we proved that $\Upsilon$ is dense in $\mathcal{P}(\mathcal{D}\times\mathcal{P}(\mathcal{D}))$ with respect to the topology $\tilde{\mathcal{T}}$). Thanks to \eqref{eq: uniform convergence m}, for any $r \in \mathbb{Z}^+$ we can find $m_r\in \mathbb{Z}^+$ such that
\begin{equation}
\sup_{\nu \in \Upsilon_1} \sup_{n\geq m_r} d_W\big( \Psi_{m_r} \cdot \nu , \Psi_n \cdot \nu \big) \leq \frac{1}{2r}.
\end{equation}
Furthermore the continuity of $\Psi_{m_r}$ implies that we can find $p_{r} \in \mathbb{Z}^+$ such that for all $s\geq p_{r}$,
\begin{equation}
d_W\big(\Psi_{m_r} \cdot \nu^{s} , \Psi_{m_r} \cdot \mu \big) \leq \frac{1}{2r}.
\end{equation}
We choose $p_r$ to be the smallest possible integer such that the above identity holds. The previous two equations imply that the sequence $\lbrace \Psi_{m_r}\cdot \nu^{p_r} \rbrace_{r\geq 1}$ is Cauchy, i.e.
\begin{align*}
\sup_{n\geq m_r}\sup_{s\geq p_{r}} d_W\big(\Psi_{m_r}\cdot \mu ,\Psi_{n} \cdot \nu^{s} \big)  \leq r^{-1}.
\end{align*}
Thus, since $\mathcal{P}\big(\mathcal{C}([0,T],\mathbb{R}^d)\big)$ is complete, the sequence must have a unique limit in $\mathcal{P}\big(\mathcal{C}([0,T],\mathbb{R}^d)\big)$. Furthermore one easily checks that the limit is independent of the choice of the approximating sequence of empirical measures $\lbrace \nu^p \rbrace_{p\in\mathbb{Z}^+}$. We define
\begin{equation}\label{eq: Psi map definition}
\Psi \cdot \mu := \lim_{r\to\infty} \Psi_{m_r}\cdot \mu 
\end{equation}

\begin{lemma}\label{Lemma Psi Limit}
The map $\Psi :\big(\mathcal{P}\big(\mathcal{D}\times\mathcal{P}(\mathcal{D})\big) , \tilde{\mathcal{T}}\big) \to \big( \mathcal{P}\big( \mathcal{C}([0,T] , \mathbb{R}^d) \big) , \mathcal{T}_w \big)$ in \eqref{eq: Psi map definition} is continuous.
\end{lemma}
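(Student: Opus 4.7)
The plan is to promote the uniform approximation bound $\sup_{\nu\in\Upsilon_1} d_W(\Psi_m\cdot\nu,\Psi_n\cdot\nu)\leq C_m$ (currently known only on the dense set of empirical measures) to all of $\mathcal{P}(\mathcal{D}\times\mathcal{P}(\mathcal{D}))$, and then deduce continuity of $\Psi$ from the classical fact that a uniform limit of continuous maps is continuous.

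First I would upgrade the uniform bound. Fix an arbitrary $\gamma\in\mathcal{P}(\mathcal{D}\times\mathcal{P}(\mathcal{D}))$. By Lemma \ref{Lemma Upsilon Dense} there is a sequence $\lbrace\nu^p\rbrace_{p\geq 1}\subset\Upsilon_1$ with $\nu^p\to\gamma$ in $\tilde{\mathcal{T}}$. Because $\tilde{\mathcal{T}}$ refines every $\mathcal{T}_m$ by construction, Lemma \ref{Lemma Psi m well defined} gives that each $\Psi_m : (\mathcal{P}(\mathcal{D}\times\mathcal{P}(\mathcal{D})),\tilde{\mathcal{T}})\to(\mathcal{P}(\mathcal{C}([0,T],\mathbb{R}^d)),\mathcal{T}_w)$ is continuous. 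The triangle inequality through the intermediate point $\nu^p$ gives $d_W(\Psi_m\cdot\gamma,\Psi_n\cdot\gamma)\leq d_W(\Psi_m\cdot\gamma,\Psi_m\cdot\nu^p)+d_W(\Psi_m\cdot\nu^p,\Psi_n\cdot\nu^p)+d_W(\Psi_n\cdot\nu^p,\Psi_n\cdot\gamma)$; letting $p\to\infty$, the outer two terms vanish by continuity while the middle one is bounded by $C_m$ for $n\geq m$. Hence $d_W(\Psi_m\cdot\gamma,\Psi_n\cdot\gamma)\leq C_m$ uniformly in $\gamma$, so $\lbrace\Psi_m\cdot\gamma\rbrace_{m\geq 1}$ is Cauchy in the complete space $(\mathcal{P}(\mathcal{C}([0,T],\mathbb{R}^d)),d_W)$ and its limit coincides with $\Psi\cdot\gamma$ from \eqref{eq: Psi map definition}. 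Sending $n\to\infty$ in the bound yields $\sup_{\gamma} d_W(\Psi_m\cdot\gamma,\Psi\cdot\gamma)\leq C_m$, i.e. $\Psi_m\to\Psi$ uniformly on the full domain.

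Next I would run a standard $\epsilon/3$ argument. Suppose $\mu_k\to\mu$ in $\tilde{\mathcal{T}}$; sequences suffice because the injective map $\mu\mapsto(\Phi_m\cdot\mu)_{m\geq 1}$ embeds the domain into the metrizable countable product $\prod_{m\geq 1}\mathcal{V}_m$ and induces $\tilde{\mathcal{T}}$ as an initial topology, while Lemma \ref{Lemma tilde T topology properties} ensures Hausdorffness, so $\tilde{\mathcal{T}}$ is in fact metrizable (alternatively the estimate below goes through verbatim for nets). Given $\epsilon>0$, pick $m$ with $C_m<\epsilon/3$. Then $d_W(\Psi\cdot\mu_k,\Psi\cdot\mu)\leq d_W(\Psi\cdot\mu_k,\Psi_m\cdot\mu_k)+d_W(\Psi_m\cdot\mu_k,\Psi_m\cdot\mu)+d_W(\Psi_m\cdot\mu,\Psi\cdot\mu)\leq \tfrac{2\epsilon}{3}+d_W(\Psi_m\cdot\mu_k,\Psi_m\cdot\mu)$, and the remaining term drops below $\epsilon/3$ for large $k$ by continuity of $\Psi_m$.

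The main obstacle is the first step: the approximation bound from the construction of $\Psi$ is a priori available only on $\Upsilon_1$, and upgrading it to arbitrary $\gamma$ requires genuine use of $\tilde{\mathcal{T}}$ — both the density of $\Upsilon_1$ from Lemma \ref{Lemma Upsilon Dense} and the continuity of every $\Psi_m$ with respect to $\tilde{\mathcal{T}}$. Once uniform convergence on the full domain is in hand, the continuity of $\Psi$ reduces to routine $\epsilon/3$ bookkeeping.
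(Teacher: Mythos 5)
Your proof is correct and follows essentially the same route as the paper: the paper defines $\Psi$ precisely as the uniform limit (via the constants $C_m$) of the maps $\Psi_m$, whose $\tilde{\mathcal{T}}$-continuity is Lemma \ref{Lemma Psi m well defined}, and then declares continuity of $\Psi$ to follow immediately. Your contribution is to fill in the two details the paper leaves implicit --- extending the bound $d_W(\Psi_m\cdot\gamma,\Psi_n\cdot\gamma)\leq C_m$ from the dense set $\Upsilon_1$ to arbitrary $\gamma$ via the triangle inequality and density, and the routine $\epsilon/3$ argument --- both of which are sound.
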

\begin{proof}
This follows immediately from the definition of $\Psi$, and the continuity of $\Psi_m$.
\end{proof}
Since $\Psi\cdot \hat{\mu}^n_* = \hat{\mu}^n_T$, the above lemma implies Theorem \ref{Theorem Psi} (the other parts of this Theorem are established in Section \ref{Section Topological Specification}).

For the rest of this section we prove Theorem  \ref{Theorem 1}. First we demonstrate that the assumption in the statement of Theorem  \ref{Theorem 1} implies that the Large Deviation Principle holds with respect to the topology $\tilde{\mathcal{T}}$. %It is more coarse than the topology induced by the cut norm on the space of graphons: most importantly, it is separable.

\begin{lemma}\label{Lemma tilde T LDP}
The sequence of probability laws $\lbrace \Pi^n \rbrace_{n\in\mathbb{Z}^+}$ satisfy a Large Deviation Principle with respect to the topology $\tilde{\mathcal{T}}$ on $ \mathcal{P}\big(\mathcal{D}\times\mathcal{P}(\mathcal{D})\big)$. That is, for open $O,F \in \mathcal{B}\big(\mathcal{P}(\mathcal{D} \times \mathcal{P}(\mathcal{D}))\big)$, where $O$ is open with respect to $\tilde{\mathcal{T}}$ and $F$ is closed with respect to $\tilde{\mathcal{T}}$.
\begin{align}
\lsup{n}\alpha_n^{-1} \log \Pi^n\big( \hat{\mu}^n_* \in F \big) &\leq -\inf_{\nu \in F}I(\nu)\\
\linf{n}\alpha_n^{-1}\log \Pi^n\big( \hat{\mu}^n_* \in O \big) &\geq -\inf_{\nu \in O}I(\nu),\label{eq: LDP assumed 1 0 0 0} 
\end{align}
where $\alpha_n \to \infty$ as $n\to\infty$. 
\end{lemma}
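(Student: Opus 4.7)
The strategy is to upgrade the given $\mathcal{T}_w$-LDP to the finer topology $\tilde{\mathcal{T}}$ by invoking the \emph{inverse contraction principle} (e.g.\ Dembo--Zeitouni, Theorem 4.2.4), applied to the identity map $\iota: (\mathcal{P}(\mathcal{D}\times\mathcal{P}(\mathcal{D})),\tilde{\mathcal{T}}) \to (\mathcal{P}(\mathcal{D}\times\mathcal{P}(\mathcal{D})),\mathcal{T}_w)$. Because $\tilde{\mathcal{T}}$ refines $\mathcal{T}_w$ by Theorem \ref{Theorem Psi}(iii), $\iota$ is a continuous bijection; both topologies are Hausdorff (the codomain as a weak topology on a Polish space, the domain by Lemma \ref{Lemma tilde T topology properties}); and the pushforward $\iota_*\Pi^n = \Pi^n$, viewed on $(\mathcal{P}(\mathcal{D}\times\mathcal{P}(\mathcal{D})),\mathcal{T}_w)$, satisfies the assumed LDP with good rate function $I$.

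The two hypotheses left to check are (a) exponential tightness of $\{\Pi^n\}$ with respect to $\tilde{\mathcal{T}}$, and (b) that $I$ is still a good rate function in the finer topology. For (a), Lemma \ref{Lemma Upsilon Dense} shows that the whole space $\mathcal{P}(\mathcal{D}\times\mathcal{P}(\mathcal{D}))$ is $\tilde{\mathcal{T}}$-compact, so exponential tightness is trivial (take the compact exhausting set to be the entire space). For (b), each level set $\{I \leq c\}$ is $\mathcal{T}_w$-closed by the assumed lower semicontinuity of $I$, hence $\tilde{\mathcal{T}}$-closed, hence a $\tilde{\mathcal{T}}$-closed subset of a $\tilde{\mathcal{T}}$-compact space, hence $\tilde{\mathcal{T}}$-compact. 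This gives both lower semicontinuity and compactness of level sets in the finer topology.

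The inverse contraction principle then yields the LDP with the same rate function $I$. For completeness, here is the mechanism: given a $\tilde{\mathcal{T}}$-closed $F$, it is $\tilde{\mathcal{T}}$-compact, so $\iota(F)=F$ is $\mathcal{T}_w$-compact and thus $\mathcal{T}_w$-closed, and the $\mathcal{T}_w$-upper bound applies directly. For a $\tilde{\mathcal{T}}$-open $O$ containing some $\mu$ with $I(\mu) < \infty$, one restricts attention to a level set $\{I \leq c\}$ (with $c > I(\mu)$) on which $\iota$ is a continuous bijection between compact Hausdorff spaces, hence a homeomorphism; this lets one replace $O \cap \{I \leq c\}$ by a $\mathcal{T}_w$-open subset to which the $\mathcal{T}_w$-lower bound can be applied, the error term outside $\{I \leq c\}$ being exponentially negligible by the upper bound of the assumed LDP.

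The main obstacle is nonexistent in substance: all of the genuine work has already been done in constructing $\tilde{\mathcal{T}}$ and establishing its compactness, Hausdorff and separability properties in Lemmas \ref{Lemma V m compact}, \ref{Lemma T m compact}, \ref{Lemma Upsilon Dense}, and \ref{Lemma tilde T topology properties}. The only minor technicality is ensuring that $\Pi^n$ makes sense as a Borel measure on the richer $\tilde{\mathcal{T}}$-Borel $\sigma$-algebra, which is clear because $(\mathcal{P}(\mathcal{D}\times\mathcal{P}(\mathcal{D})),\tilde{\mathcal{T}})$ is in fact metrizable (as a homeomorph of a closed subspace of the countable product of Polish spaces $\prod_m \mathcal{V}_m$), and the underlying measurable maps $\Phi_m \circ \hat{\mu}^n_*$ are explicit functions of the finite random data $\{u^j_*, w^{jk}\}_{j,k \in I_n}$.
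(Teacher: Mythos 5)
Your proposal is correct and follows essentially the same route as the paper: the paper's proof also applies the inverse contraction principle (to the identity map from the $\tilde{\mathcal{T}}$-topology to the weak topology), with exponential tightness obtained for free from the $\tilde{\mathcal{T}}$-compactness of $\mathcal{P}(\mathcal{D}\times\mathcal{P}(\mathcal{D}))$ established in Lemma \ref{Lemma Upsilon Dense}. Your write-up merely makes explicit the hypothesis checks (Hausdorffness, goodness of $I$ in the finer topology) that the paper leaves implicit.
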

\begin{proof}
It is proved in Lemma \ref{Lemma Upsilon Dense} that $ \mathcal{P}\big(\mathcal{D}\times\mathcal{P}(\mathcal{D})\big)$ is compact with respect to the topology $\tilde{\mathcal{T}}$. This means that the sequence of probability laws $\lbrace \Pi^n \rbrace_{n\in\mathbb{Z}^+}$ is exponentially tight with respect to the topology $\tilde{\mathcal{T}}$. The Lemma thus follows from the Inverse Contraction Principle \cite[Corollary 4.2.6]{Dembo1998} and the assumption in the statement of Theorem \ref{Theorem 1}.
\end{proof}
The Large Deviations result of Theorem \ref{Theorem Psi} now follows from an application of Varadhan's contraction principle (\cite[Theorem 4.2.1]{Dembo1998}) to Lemma \ref{Lemma tilde T LDP}. We also make use of the continuity of the map $\Psi$ in Lemma \ref{Lemma Psi Limit}.

\bibliographystyle{plain}
\bibliography{neuralbib}

\begin{thebibliography}{10}

\bibitem{Backhausz2018}
{\'{A}}gnes Backhausz and Bal{\'{a}}zs Szegedy.
\newblock {Action Convergence of Operators and Graphs}.
\newblock {\em arXiv Preprint}, (November):1--43, 2018.

\bibitem{Barre2020}
Julien Barr{\'{e}}, Paul Dobson, Michela Ottobre, and Ewelina Zatorska.
\newblock {Fast non mean-field networks: uniform in time averaging}.
\newblock (1), 2020.

\bibitem{Bayraktar2020}
Erhan Bayraktar, Suman Chakraborty, and Ruoyu Wu.
\newblock {Graphon mean field systems}.
\newblock pages 1--26, 2020.

\bibitem{Bhamidi2019}
Shankar Bhamidi, Amarjit Budhiraja, and Ruoyu Wu.
\newblock {Weakly interacting particle systems on inhomogeneous random graphs}.
\newblock {\em Stochastic Processes and their Applications}, 129(6):2174--2206,
  2019.

\bibitem{Bhattacharya2017}
Bhaswar~B. Bhattacharya, Shirshendu Ganguly, Eyal Lubetzky, and Yufei Zhao.
\newblock {Upper tails and independence polynomials in random graphs}.
\newblock {\em Advances in Mathematics}, 319:313--347, 2017.

\bibitem{Billingsley1999}
Patrick Billingsley.
\newblock {\em {Convergence of Probability Measures}}.
\newblock 1999.

\bibitem{Bordenave2015}
Charles Bordenave and Pietro Caputo.
\newblock {Large deviations of empirical neighborhood distribution in sparse
  random graphs}.
\newblock {\em Probability Theory and Related Fields}, 163(1-2):149--222, 2015.

\bibitem{Bressloff2012}
Paul~C. Bressloff and Matthew~A. Webber.
\newblock {Front propagation in stochastic neural fields}.
\newblock {\em SIAM Journal on Applied Dynamical Systems}, 11(2):708--740,
  2012.

\bibitem{Budhiraja2012}
Amarjit Budhiraja, Paul Dupuis, and Markus Fischer.
\newblock {Large deviation properties of weakly interacting processes via weak
  convergence methods}.
\newblock {\em Annals of Probability}, 40(1):74--102, 2012.

\bibitem{Carmona2018}
Ren{\'{e}} Carmona and Fran{\c{c}}ois Delarue.
\newblock {\em {Probabilistic theory of mean field games with applications.
  I}}, volume~83.
\newblock Springer, 2018.

\bibitem{Chatterjee2016}
Sourav Chatterjee.
\newblock {An Introduction to Large Deviations for Random Graphs}.
\newblock 53(4):617--642, 2016.

\bibitem{Chatterjee2011}
Sourav Chatterjee and S.~R.S. Varadhan.
\newblock {The large deviation principle for the Erdo{\{}double
  acute{\}}s-R{\'{e}}nyi random graph}.
\newblock {\em European Journal of Combinatorics}, 32(7):1000--1017, 2011.

\bibitem{Coghi2018}
Michele Coghi, Jean-Dominique Deuschel, Peter Friz, and Mario Maurelli.
\newblock {Pathwise McKean-Vlasov Theory}.
\newblock pages 1--41, 2018.

\bibitem{Cook2020}
Nicholas Cook and Amir Dembo.
\newblock {Large Deviations of subgraph counts for Sparse Erdos-Renyi Graphs}.
\newblock {\em Advances in Mathematics}, 373:1--53, 2020.

\bibitem{Coppini2019}
Fabio Coppini, Helge Dietert, and Giambattista Giacomin.
\newblock {A law of large numbers and large deviations for interacting
  diffusions on Erdos-R{\'{e}}nyi graphs}.
\newblock {\em Stochastics and Dynamics}, 2019.

\bibitem{DaiPra1996}
Paolo {Dai Pra} and Frank den Hollander.
\newblock {McKean-Vlasov limit for interacting random processes in random
  media}.
\newblock {\em Journal of statistical physics}, 84(3), 1996.

\bibitem{Dawson1987}
Donald Dawson and Jurgen Gartner.
\newblock {Large deviations from the mckean-vlasov limit for weakly interacting
  diffusions}.
\newblock {\em Stochastics}, 20(4), 1987.

\bibitem{Delattre2016}
Sylvain Delattre, Giambattista Giacomin, and Eric Lu{\c{c}}on.
\newblock {A Note on Dynamical Models on Random Graphs and Fokker-Planck
  Equations}.
\newblock {\em Journal of Statistical Physics}, 165(4):785--798, 2016.

\bibitem{Dembo2010}
Amir Dembo and Andrea Montanari.
\newblock {Gibbs measures and phase transitions on sparse random graphs}.
\newblock {\em Brazilian Journal of Probability and Statistics},
  24(2):137--211, 2010.

\bibitem{Dembo1998}
Amir Dembo and Ofer Zeitouni.
\newblock {\em {Large Deviations Techniques and Applications 2nd Edition}}.
\newblock Springer, 1998.

\bibitem{Donsker1975a}
Monroe Donsker and SRS Varadhan.
\newblock {Asymptotic Evaluation of Certain Markov Process Expectations for
  Large Time I.}
\newblock {\em Communications on Pure and Applied Mathematics}, 28(1):1--47,
  1975.

\bibitem{Dupuis1997}
Paul Dupuis and Richard Ellis.
\newblock {\em {A Weak Convergence Approach to the Theory of Large
  Deviations}}.
\newblock Wiley Series in Probability and Mathematical Statistics, 1997.

\bibitem{Dupuis2020}
Paul Dupuis and Georgi Medvedev.
\newblock {The large deviation principle for interacting dynamical systems on
  random graphs}.
\newblock 2020.

\bibitem{Freidlin2012}
Mark~I. Freidlin and Alexander~D. Wentzell.
\newblock {\em {Random Perturbations of Dynamical Systems. 3rd Edition}}.
\newblock Springer Heidelberg, 2012.

\bibitem{Frieze1999}
Alan Frieze and Ravi Kannan.
\newblock {Quick approximation to matrices and applications}.
\newblock {\em Combinatorica}, 19(2):175--220, 1999.

\bibitem{Gkogkas2020}
Marios~Antonios Gkogkas and Christian Kuehn.
\newblock {Graphop Mean-Field Limits for Kuramoto-Type Models}.
\newblock {\em arXiv Preprint}, pages 1--26, 2020.

\bibitem{Huang2017}
Chengcheng Huang and Brent Doiron.
\newblock {Once upon a (slow) time in the land of recurrent neuronal
  networks{\ldots}}.
\newblock {\em Current Opinion in Neurobiology}, 46:31--38, 2017.

\bibitem{Kuehn2020}
Christian Kuehn.
\newblock {Network dynamics on graphops}.
\newblock {\em New Journal of Physics}, 22(5), 2020.

\bibitem{Lacker2019}
Daniel Lacker, Kavita Ramanan, and Ruoyu Wu.
\newblock {Locally interacting diffusions as space-time Markov random fields}.
\newblock {\em Arxiv Preprint}, pages 1--30, 2019.

\bibitem{Lovasz2012}
Laszlo Lovasz.
\newblock {\em {Large Networks and Graph Limits}}.
\newblock 2012.

\bibitem{Lucon2020}
Eric Lu{\c{c}}on.
\newblock {Quenched asymptotics for interacting diffusions on inhomogeneous
  random graphs}.
\newblock {\em Stochastic Processes and their Applications}, pages 1--52, 2020.

\bibitem{maclaurin2016large}
James Maclaurin.
\newblock {Large Deviations of a Network of Interacting Particles with Sparse
  Random Connections}.
\newblock {\em arXiv preprint arXiv:1607.05471}, 2018.

\bibitem{Montanari2019}
Andrea Montanari.
\newblock {Optimization of the Sherrington-Kirkpatrick Hamiltonian}.
\newblock pages 1--27, 2019.

\bibitem{Oliveira2019}
Roberto~I. Oliveira and Guilherme~H. Reis.
\newblock {Interacting Diffusions on Random Graphs with Diverging Average
  Degrees: Hydrodynamics and Large Deviations}.
\newblock {\em Journal of Statistical Physics}, 176(5):1057--1087, 2019.

\bibitem{Oliveira2020}
Roberto~I. Oliveira, Guilherme~H. Reis, and Lucas~M. Stolerman.
\newblock {Interacting diffusions on sparse graphs: hydrodynamics from local
  weak limits}.
\newblock {\em Electronic Journal of Probability}, 25(0), 2020.

\bibitem{Pena2018}
Rodrigo Pena, Michael~A Zaks, and Antonio~C Roque.
\newblock {Dynamics of spontaneous activity in random networks with multiple
  neuron subtypes and synaptic noise}.
\newblock {\em Journal of Computational Neuroscience}, 45:1--28, 2018.

\bibitem{Phillips1993}
J.~R. Phillips, H.~S.J. {Van Der Zant}, J.~White, and T.~P. Orlando.
\newblock {Influence of induced magnetic fields on the static properties of
  Josephson-junction arrays}.
\newblock {\em Physical Review B}, 47(9):5219--5229, 1993.

\bibitem{Roberts2019}
James~A. Roberts, Leonardo~L. Gollo, Romesh~G. Abeysuriya, Gloria Roberts,
  Philip~B. Mitchell, Mark~W. Woolrich, and Michael Breakspear.
\newblock {Metastable brain waves}.
\newblock {\em Nature Communications}, 10(1):1--17, 2019.

\bibitem{Shiryaev2016}
Albert~N. Shiryaev.
\newblock {\em {Probability-1. Third Edition}}.
\newblock Springer, 2016.

\bibitem{Sompolinsky1988}
H.~Sompolinsky, A.~Crisanti, and H.~J. Sommers.
\newblock {Chaos in random neural networks}.
\newblock {\em Physical Review Letters}, 61(3):259--262, 1988.

\bibitem{Sznitman1989}
Alain-Sol Sznitman.
\newblock {Topics in Propagation of Chaos}.
\newblock In P.L Henneguin, editor, {\em Lecture Notes in Mathematics. Ecole
  d'Ete de Probabilites de Saint-Flour XIX - 1989}. Springer-Verlag, 1989.

\bibitem{Tanaka1982}
H.~Tanaka.
\newblock {Limit Theorems for Certain Diffusion Processes with Interaction}.
\newblock In Kiyoshi Ito, editor, {\em Stochastic Analysis. Proceedings of the
  Taniguchi International Symposium on Stochastic Analysis, Katata and Kyoto,
  1982}, volume~53, pages 1689--1699. North-Holland, 1982.

\end{thebibliography}
\end{document}